\xpatchcmd{\proof}{\itshape}{\normalfont\bfseries}{}{}
\newtheoremstyle{repeat}{}{}{\itshape}{}{\bfseries}{.}{.5em}{#3, repeated}
\newtheorem{theorem}{Theorem}[section]
\newtheorem{proposition}[theorem]{Proposition}
\newtheorem{lemma}[theorem]{Lemma}
\newtheorem{corollary}[theorem]{Corollary}
\newtheorem{fact}[theorem]{Fact}
\theoremstyle{definition}
\newtheorem{definition}[theorem]{Definition}
\newtheorem{remark}[theorem]{Remark}
\newtheorem{convention}[theorem]{Convention}
\newtheorem{example}[theorem]{Example}
\newtheorem{question}[theorem]{Question}
\theoremstyle{repeat}
\newtheorem*{repeated-theorem}{Repeat}
\newcommand{\A}{\mathcal{A}}
\newcommand{\B}{\mathcal{B}}
\newcommand{\C}{\mathcal{C}}
\newcommand{\E}{\mathcal{E}}
\renewcommand{\L}{\mathcal{L}}
\newcommand{\R}{\mathbb{R}}
\newcommand{\U}{\mathcal{U}}
\newcommand{\Set}{\mathbf{Set}}
\newcommand{\FinSet}{\mathbf{FinSet}}
\newcommand{\Spec}{\mathbf{Spec}}
\newcommand{\Stone}{\mathbf{Stone}}
\newcommand{\KO}{\mathcal{K}^o}
\newcommand{\CohTheory}{\mathbf{CohTheory}}
\newcommand{\TypeSpaceFunc}{\mathbf{TypeSpaceFunc}}
\newcommand{\LTFunc}{\mathbf{LTFunc}}
\newcommand{\DLat}{\mathbf{DLat}}
\newcommand{\Topos}{\mathfrak{Topos}}
\newcommand{\CohTheoryBi}{\mathfrak{CohTheory}}
\newcommand{\TypeSpaceFuncBi}{\mathfrak{TypeSpaceFunc}}
\newcommand{\LTFuncBi}{\mathfrak{LTFunc}}
\newcommand{\Mod}{\text{-}\mathbf{Mod}}
\renewcommand{\O}{\mathcal{O}}
\DeclareMathOperator{\ctp}{ctp}
\DeclareMathOperator{\Hom}{Hom}
\DeclareMathOperator{\Th}{Th}
\renewcommand{\S}{\operatorname{S}}
\DeclareMathOperator{\SpecSpace}{Spec}
\DeclareMathOperator{\dom}{dom}
\newcommand{\op}{{\textup{op}}}
\renewcommand{\phi}{\varphi}
\renewcommand{\k}{\mathbf{k}}
\newcommand{\m}{\mathbf{m}}
\newcommand{\n}{\mathbf{n}}
\title{Type space functors and interpretations in positive logic}
\author{Mark Kamsma}
\address{University of East Anglia, Norwich, United Kingdom.}
\email[Mark Kamsma]{m.kamsma@uea.ac.uk}
\urladdr{https://markkamsma.nl}
\date{\today \\ \indent \emph{2020 Mathematics Subjects Classification}: Primary: 03C07; secondary: 03B20, 03G10, 03G30, 06E15}
\keywords{positive model theory; coherent logic; Stone duality; type space; interpretation; bi-intepretation}
\begin{document}

\begin{abstract}
We construct a 2-equivalence $\CohTheoryBi^\op \simeq \TypeSpaceFuncBi$. Here $\CohTheoryBi$ is the 2-category of positive theories and $\TypeSpaceFuncBi$ is the 2-category of type space functors. We give a precise definition of interpretations for positive logic, which will be the 1-cells in $\CohTheoryBi$. The 2-cells are definable homomorphisms. The 2-equivalence restricts to a duality of categories, making precise the philosophy that a theory is `the same' as the collection of its type spaces (i.e.\ its type space functor).

In characterising those functors that arise as type space functors, we find that they are specific instances of (coherent) hyperdoctrines. This connects two different schools of thought on the logical structure of a theory.

The key ingredient, the Deligne completeness theorem, arises from topos theory, where positive theories have been studied under the name of coherent theories.
\end{abstract}

\maketitle

\setcounter{tocdepth}{1}
\tableofcontents

\section{Introduction}
\label{sec:introduction}
For a first-order theory $T$, we can construct its Stone space $\S_n(T)$ of $n$-types. A function $f: \{1, \ldots, n\} \to \{1, \ldots, m\}$ then induces an open continuous function $\S_m(T) \to \S_n(T)$. These are standard constructions, and we can collect this data into a functor $\FinSet^\op \to \Stone$. In positive logic we consider only positive existential formulas. We will consider types of positive existential formulas in all models of a positive theory. The resulting type spaces are then spectral spaces. This setting was studied by Haykazyan in \cite{haykazyan_spaces_2019}.

We should mention a semantically different approach, namely the one taken by Ben-Yaacov in \cite{ben-yaacov_positive_2003, ben-yaacov_thickness_2003}. In that approach, only maximal positive existential types are considered. Equivalently, those are the positive existential types realised in an existentially closed model. We discuss this distinction in semantics further in \thref{rem:existentially-closed-positive-logic}.

The philosophy of type space functors is that they encode all information about a certain theory. Indeed, in \cite[Theorem 4.4]{haykazyan_spaces_2019} it is shown that $T$ and $T'$ are isomorphic (in the category that we call $\CohTheory$, \thref{def:2-category-cohtheory}) if and only if $\S(T)$ and $\S(T')$ are naturally isomorphic. This result will actually be corollary to our main theorem, from which it follows that $\S(-)$ is an equivalence of categories. 

Positive logic has also been studied in topos theory via classifying toposes (see section \ref{sec:connection-with-classifying-toposes}), where it is called \emph{coherent logic}. It is in fact from this world, that we get the key ingredient for our main proof: the Deligne completeness theorem (see \thref{fact:deligne-completeness-theorem}). The definition of type space functors is semantical: a type is a set of (positive existential) formulas that is realised in some model (in the classical sense, so in $\Set$). The characterisation we give is syntactical. The Deligne completeness theorem provides the link between these two.

The reader that is familiar with hyperdoctrines will recognise the characterisation of type space functors as specific (coherent) hyperdoctrines (see also \thref{rem:hyperdoctrines}). Hyperdoctrines were introduced by Lawvere \cite{lawvere_adjointness_2006} and have been studied in categorical settings (e.g.\ \cite{coumans_generalising_2012}). Of course, type spaces have long been studied by model theorists. This paper provides a precise proof that these two approaches are in fact the same. While the objects in both approaches are the same, the 1-cells and 2-cells in our 2-categories are based on model-theoretic ideas, and are different from the categorical treatments.

\vspace{\baselineskip}\noindent
\textbf{Main results.} We define the 2-category $\CohTheoryBi$ of coherent theories (i.e.\ positive theories), interpretations and definable homomorphisms. We also define the category $\CohTheory$ of coherent theories and strong interpretations (both in \thref{def:2-category-cohtheory}). To this end we formulate precisely what an interpretation is in positive logic (\thref{def:interpretation}). This generalises the usual first-order definition. In $\CohTheoryBi$, two theories will be equivalent precisely when they are bi-interpretable. Isomorphisms correspond to essentially having the same Morleyisation.

We also define the 2-category $\TypeSpaceFuncBi$ and category $\TypeSpaceFunc$ of \emph{type space functors} (\thref{def:2-category-type-space-func}). These are specific functors $\FinSet^\op \to \Spec$, where $\Spec$ denotes the category of spectral spaces (\thref{def:type-space-functor}).

The usual construction of a type space $\S_n(T)$ from a theory $T$ extends to a 2-functor between these 2-categories. This functor is part of a strict 2-equivalence, where the inverse 2-functor assigns a theory $\Th(F)$ to a type space functor $F$.
\begin{theorem}
\thlabel{thm:equivalence-2-categories}
The 2-functors
\[
\S: \CohTheoryBi^\op \rightleftarrows \TypeSpaceFuncBi: \Th
\]
form a 2-equivalence of 2-categories, which restricts to an equivalence
\[
\S: \CohTheory^\op \rightleftarrows \TypeSpaceFunc: \Th
\]
\end{theorem}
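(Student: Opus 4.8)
The plan is to exhibit $\S$ and $\Th$ as mutually pseudo-inverse strict $2$-functors by constructing a unit and counit that are $2$-natural isomorphisms, after first checking that both assignments are well-defined on objects, $1$-cells and $2$-cells. On objects this is the content of the characterisation of type space functors as coherent hyperdoctrines (\thref{def:type-space-functor}, \thref{rem:hyperdoctrines}): I would verify that $\S(T)$ satisfies the axioms defining an object of $\TypeSpaceFuncBi$, and conversely that $\Th(F)$ is a genuine positive theory whose Lindenbaum--Tarski data are read off from the lattices of compact opens of the spaces $F(n)$ and from the action of $F$ on the maps of $\FinSet$. On $1$-cells and $2$-cells I would check that $\S$ sends an interpretation (\thref{def:interpretation}) to a morphism of type space functors and a definable homomorphism to a $2$-cell, with $\Th$ doing the reverse; functoriality in each variable is then a bookkeeping verification.

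The core of the argument is the unit $\eta \colon \mathrm{Id} \Rightarrow \Th \circ \S$. For a fixed theory $T$ and each $n$, the key lemma is that the compact open subsets of $\S_n(T)$ are exactly the positive existential formulas in $n$ free variables modulo $T$-provable equivalence, forming a distributive lattice isomorphic to the $n$-th Lindenbaum--Tarski algebra of $T$; the substitution and quantifier structure is recovered from the functorial maps $\S(f)$ and their left adjoints. This is precisely the point at which the Deligne completeness theorem (\thref{fact:deligne-completeness-theorem}) is indispensable: it guarantees that a formula vanishes on every $\Set$-model type (i.e.\ on all of $\S_n(T)$) if and only if it is $T$-provably false, so that the semantic lattice of compact opens coincides with the syntactic one and no definable set is lost. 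Granting this, the component $\eta_T$ is the canonical interpretation of $T$ into $\Th(\S(T))$, and the lemma shows it is an isomorphism in $\CohTheoryBi$.

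For the counit $\varepsilon \colon \S \circ \Th \Rightarrow \mathrm{Id}$ I would fix a type space functor $F$ and produce a natural homeomorphism $\S_n(\Th(F)) \cong F(n)$ respecting the spectral structure and commuting with the maps induced by $\FinSet$. I expect this to be the main obstacle. The forward map sends a type of $\Th(F)$ to the point of $F(n)$ it determines; the difficulty is surjectivity, namely showing that every abstract point of $F(n)$ is realised by a genuine $\Set$-model of $\Th(F)$. Here the hyperdoctrine axioms on $F$ ensure that each point yields a finitely consistent set of formulas, and Deligne completeness again converts this syntactic consistency into an actual model realising the type; injectivity and continuity follow from the lattice isomorphism of the previous step applied to $\Th(F)$. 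Verifying that $\eta$ and $\varepsilon$ are $2$-natural and satisfy the triangle identities then upgrades the object-level correspondence to a strict $2$-equivalence.

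Finally, I would restrict to the sub-$2$-categories: strong interpretations are exactly those whose image under $\S$ lands among the $1$-cells of $\TypeSpaceFunc$, so the same unit and counit, now consisting of isomorphisms of theories and of type space functors, restrict to the claimed equivalence of categories $\CohTheory^\op \simeq \TypeSpaceFunc$. Throughout, the subtlety peculiar to positive logic is that interpretations run through definable quotients of definable subsets and must preserve the positive existential structure, so the bookkeeping on $1$-cells is heavier than in the classical first-order case; but the two round-trip isomorphisms, driven by Deligne completeness, are what carry the real weight.
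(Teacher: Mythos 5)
Your proposal is correct and follows essentially the same route as the paper: you construct the two 2-natural isomorphisms $\Th(\S(T)) \cong T$ (via the correspondence between compact open subsets of $\S_n(T)$ and formulas modulo $T$-equivalence) and $\S(\Th(F)) \cong F$ (sending a type $p$ to the prime filter of compact opens $U$ with $R_U \in p$, with the inverse obtained by realising, in an actual model, the set of formulas determined by a point --- the consistency argument ultimately resting on Deligne completeness), exactly as in the paper's proof. The only cosmetic difference is that you additionally ask for the triangle identities, which the paper does not need: two 2-natural isomorphisms $\Th \circ \S \cong \mathrm{Id}$ and $\S \circ \Th \cong \mathrm{Id}$ already constitute a 2-equivalence, and noting that both isomorphisms live in $\CohTheory$ and $\TypeSpaceFunc$ gives the restricted equivalence at the same time.
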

\begin{corollary}
\thlabel{cor:duality-first-order-theory-boolean-type-space}
The 2-equivalence in \thref{thm:equivalence-2-categories} restricts to positively model complete (i.e.\ first-order) theories and type space functors taking their values in $\Stone$.
\end{corollary}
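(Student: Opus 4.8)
The plan is to prove that the 2-equivalence in \thref{thm:equivalence-2-categories} restricts appropriately to the full subcategories of positively model complete theories and $\Stone$-valued type space functors. Since the main theorem already gives a 2-equivalence (and an equivalence at the level of 1-categories), the corollary amounts to two things: first, verifying that the two 2-functors $\S$ and $\Th$ carry objects of one restricted class to objects of the other; and second, observing that the restriction of an equivalence to corresponding full subcategories (closed under the equivalence) is again an equivalence. The second point is formal, so the real content is the object-level matching.

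First I would make precise the correspondence on objects. A theory $T$ is positively model complete exactly when it is (equivalent to) a first-order theory, i.e.\ when every positive existential formula has a complement up to $T$-provable equivalence; equivalently, the Lindenbaum--Tarski algebras $\L_n(T)$ are Boolean rather than merely distributive lattices. On the dual side, a spectral space is a Stone space precisely when its lattice of compact open sets is a Boolean algebra, equivalently when it is Hausdorff (so that compact opens are clopen). The key step is therefore to show that under the Stone-type duality underlying $\S$ and $\Th$, the Booleanness of each $\L_n(T)$ corresponds exactly to the type space $\S_n(T)$ being a Stone space. This should follow by tracking, through the construction of $\S_n(T)$ as the spectrum of $\L_n(T)$, the standard fact that $\Spec$ restricts to a duality $\BA^\op \simeq \Stone$ sitting inside $\DLat^\op \simeq \Spec$.

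Concretely, I would argue as follows. If $T$ is positively model complete, then each $\L_n(T)$ is Boolean, so each type space $\S_n(T) = \SpecSpace(\L_n(T))$ is Stone; hence $\S(T)$ is a $\Stone$-valued type space functor, and the induced maps on finite sets are the (automatically continuous, and now open) dual maps. Conversely, if $F$ is a type space functor taking values in $\Stone$, then every $F(\n)$ has a Boolean algebra of clopens, and unwinding the definition of $\Th(F)$ shows that its Lindenbaum--Tarski algebras are these Boolean algebras, so $\Th(F)$ is positively model complete. These two implications, together with the unit and counit of the established (2-)equivalence restricting to the subcategories, give the restricted (2-)equivalence. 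The only subtlety to check is that the 1-cells (interpretations / natural transformations) and 2-cells (definable homomorphisms) do not leave the subcategories, but this is immediate since the subcategories are full: any morphism between objects already in the subclass is retained.

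The main obstacle I anticipate is purely bookkeeping rather than conceptual: one must confirm that ``positively model complete'' as defined syntactically for $T$ matches exactly the condition that makes $\S_n(T)$ Hausdorff, i.e.\ that positive model completeness is the precise syntactic shadow of every compact open being clopen. Provided the excerpt's development has set up $\S_n(T)$ as $\SpecSpace(\L_n(T))$ and recorded the restriction $\BA^\op \simeq \Stone$ of the ambient duality, this identification is the crux, and everything else is a routine transfer of the already-proven equivalence along fully faithful inclusions.
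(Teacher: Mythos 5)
Your proposal is correct, and its skeleton---match objects across the equivalence, then observe that restricting to corresponding full sub-2-categories preserves the equivalence---is the same as the paper's; the difference lies in how the object-level correspondence is established. The paper already has this correspondence on the shelf as \thref{fact:properties-type-space-functor-of-a-theory}(ii) ($T$ is positively model complete iff every $\S_n(T)$ is a Stone space), proved by citing Haykazyan's result that positive model completeness is equivalent to all type spaces being Hausdorff, together with \thref{fact:hausdorff-spectral-spaces}; the corollary is then a one-line consequence. You instead re-derive that fact algebraically: positive model completeness says exactly that each Lindenbaum--Tarski algebra is Boolean (the $\psi$ with $T \models \forall \bar{x}(\neg\phi \leftrightarrow \psi)$ is precisely a complement for $\phi$), and under the duality $\DLat^\op \simeq \Spec$ a spectral space is Stone exactly when its lattice of compact opens is Boolean, i.e.\ the restriction $\BA^\op \simeq \Stone$. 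Both routes are valid; yours is self-contained and makes the statement visibly parallel to \thref{cor:duality-theory-ltfunc}, while the paper's is shorter because the topological characterisation was recorded earlier. One economy you could have made: for the converse direction (Stone-valued $F$ implies $\Th(F)$ positively model complete) you unwind $\Th(F)$ to identify its Lindenbaum--Tarski algebras with the lattices $\KO(F_n)$, which requires \thref{thm:theory-of-type-space-functor-vs-models} and \thref{cor:theory-of-type-space-functor-every-formula-is-relation}; it is quicker to invoke the natural isomorphism $\S(\Th(F)) \cong F$ supplied by the main theorem and apply the object-level fact just once.
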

There is a well-known duality $\DLat^\op \simeq \Spec$, where $\DLat$ is the category of distributive lattices. So we can view type space functors $\FinSet^\op \to \Spec$ equivalently as functors $\FinSet \to \DLat$, associating to $n$ the Lindenbaum-Tarski algebra of formulas in $n$ free variables. We call such functors \emph{Lindenbaum-Tarski functors}, and collect them in a 2-category $\LTFuncBi$ (\thref{def:category-of-hyperdoctrines}). This exhibits type space functors as specific instances of (coherent) hyperdoctrines (\thref{rem:hyperdoctrines}).
\begin{corollary}
\thlabel{cor:duality-theory-ltfunc}
There is a 2-equivalence of categories
\[
\CohTheoryBi \simeq \LTFuncBi,
\]
which restricts to an equivalence
\[
\CohTheory \simeq \LTFunc.
\]
This is induced by the equivalence in \thref{thm:equivalence-2-categories} and the duality $\DLat^\op \simeq \Spec$.
\end{corollary}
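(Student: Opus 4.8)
The plan is to deduce both equivalences formally, by post-composing the 2-equivalence of \thref{thm:equivalence-2-categories} with the lift of the Stone duality $\DLat^\op \simeq \Spec$ to the relevant functor 2-categories. Write $E \colon \Spec \to \DLat^\op$ for the duality, sending a spectral space $X$ to its distributive lattice $\KO(X)$ of compact open subsets and a spectral map to the preimage homomorphism. Since post-composition with an equivalence is again an equivalence, $E$ induces an equivalence of 2-categories $[\FinSet^\op, \Spec] \simeq [\FinSet^\op, \DLat^\op]$. Using the canonical identification $[\FinSet^\op, \DLat^\op] \cong [\FinSet, \DLat]^\op$ (a functor valued in an opposite 2-category, together with its 1- and 2-cells, is the same as a functor out of the opposite category with all higher-dimensional data reversed), this reads
\[
[\FinSet^\op, \Spec] \;\simeq\; [\FinSet, \DLat]^\op.
\]

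First I would check that under this equivalence the full sub-2-category $\TypeSpaceFuncBi \subseteq [\FinSet^\op, \Spec]$ corresponds exactly to $\LTFuncBi^\op$, where $\LTFuncBi \subseteq [\FinSet, \DLat]$. On objects this is essentially the definition: a type space functor $F \colon \FinSet^\op \to \Spec$ is sent to the Lindenbaum--Tarski functor $n \mapsto \KO(F(n))$, whose action on a map $f \colon n \to m$ in $\FinSet$ is the substitution homomorphism, and the defining conditions of a type space functor (\thref{def:type-space-functor}) translate, under the spectral/lattice dictionary, into precisely the hyperdoctrine-style conditions defining $\LTFuncBi$ (\thref{def:category-of-hyperdoctrines}); conversely every Lindenbaum--Tarski functor arises this way up to isomorphism since $E$ is essentially surjective. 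On 1-cells and 2-cells I would verify that $E$ carries the (natural-transformation-like) morphisms and the 2-cells of $\TypeSpaceFuncBi$ to those of $\LTFuncBi$, with directions reversed by the contravariance of $E$; this is where the identification $\TypeSpaceFuncBi \simeq \LTFuncBi^\op$ (equivalently $\TypeSpaceFuncBi^\op \simeq \LTFuncBi$) gets its content.

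Granting this, composing the two equivalences gives
\[
\CohTheoryBi^\op \;\simeq\; \TypeSpaceFuncBi \;\simeq\; \LTFuncBi^\op,
\]
and taking opposites yields the desired $\CohTheoryBi \simeq \LTFuncBi$; the two contravariances cancel, which is exactly why the resulting equivalence is covariant. Restricting the same argument to the 1-categorical parts $\CohTheory$, $\TypeSpaceFunc$ and $\LTFunc$ produces $\CohTheory \simeq \LTFunc$.

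I expect the main obstacle to be the bookkeeping in the middle step: confirming that the Stone duality, which a priori is only a statement about the underlying 1-categories $\DLat$ and $\Spec$, lifts compatibly to the 2-cells (e.g.\ the pointwise order on homomorphisms and its reversal in $\DLat^\op$), and that the extra structure carried by a type space functor—the action on the maps of $\FinSet$ encoding substitution and the existential quantifiers as base change with their left adjoints, together with the Beck--Chevalley and Frobenius conditions—matches the corresponding structure on Lindenbaum--Tarski functors under $\KO$. Since $\LTFuncBi$ is defined precisely so as to be the dual notion, this should amount to unwinding definitions rather than a genuinely new difficulty, but it is where essentially all the work of the corollary resides.
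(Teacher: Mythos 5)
Your proposal is correct and takes essentially the same route as the paper: the paper deduces the corollary by composing \thref{thm:equivalence-2-categories} with \thref{cor:typespacefunc-dual-to-ltfunc}, which is exactly your middle step that Stone duality lifts to a 2-equivalence $\LTFuncBi^\op \simeq \TypeSpaceFuncBi$ restricting to $\LTFunc^\op \simeq \TypeSpaceFunc$. The one point you slightly undersell is that the ``spectral/lattice dictionary'' you defer to (openness of a spectral map corresponding to the existence of a left adjoint satisfying Frobenius) is not pure definition-unwinding but the content of \thref{cor:spectral-map-open-iff-dlat-map-left-adjoint-and-frobenius}, which the paper establishes beforehand via \thref{prop:frobenius-for-open-spectral-maps} and \thref{prop:dlat-left-adjoint-and-frobenius-implies-open}.
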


\vspace{\baselineskip}\noindent
\textbf{Overview.} We start with a brief recap of some preliminaries in section \ref{sec:preliminaries}: positive logic, spectral spaces and 2-categories. We only recall basic definitions, and refer to more extensive treatments for the interested reader.

The aim of section \ref{sec:category-of-theories-and-interpretations} is to define the 2-category $\CohTheoryBi$ and subsequently the category $\CohTheory$. For this we give a precise definition of an interpretation between theories from a syntactical point of view.

In section \ref{sec:category-of-type-space-functors} we first identify the characterising properties that a type space functor of the form $\S(T)$ always has. This gives rise to a 2-category $\TypeSpaceFuncBi$ of specific functors $\FinSet^\op \to \Spec$. We also define a category $\TypeSpaceFunc$, where the arrows are natural transformations satisfying the so-called \emph{Beck-Chevalley condition}. We then conclude that $\S(-)$ is a functor of 2-categories.

Section \ref{sec:hyperdoctrines-and-the-frobenius-condition} deals with the syntactical perspective of type space functors, and their link to hyperdoctrines. For the main theorem, only a technical result at the start of the section is needed (\thref{prop:frobenius-for-open-spectral-maps}).

Section \ref{sec:recovering-a-theory} describes how to construct a theory $\Th(F)$ from a type space functor $F$, as well as how to turn this operation into a 2-functor.

Section \ref{sec:the-2-equivalence} contains the proof of the main theorem.

We close out with section \ref{sec:connection-with-classifying-toposes}, linking $\CohTheoryBi$ to toposes via classifying toposes.

\vspace{\baselineskip}\noindent
\textbf{Acknowledgements.} I would like to thank my supervisor, Jonathan Kirby, for his feedback. Considering the 2-categorical setting was also his idea. I would also like to thank the anonymous referee for their feedback which helped improve the presentation of this paper, and for suggesting the equivalence in \thref{prop:beck-chevalley}. This paper is part of a PhD project at the UEA (University of East Anglia), and as such is supported by a scholarship from the UEA.
\section{Preliminaries}
\label{sec:preliminaries}

\subsection{Positive logic}
\label{subsec:positive-logic}
We only recall the basics of positive logic. There is a brief introduction in \cite{haykazyan_spaces_2019}, and more extensive treatments in \cite{poizat_positive_2018, ben-yaacov_fondements_2007, ben-yaacov_positive_2003}. We stick to single-sorted logic. The notation and conventions for multi-sorted logic are more complicated. However, there should be no intrinsic difficulty with generalising to multi-sorted logic.

Positive logic has been studied in topos theory under the name of \emph{coherent logic}. This section mentions both, so it can serve as a dictionary.
\begin{definition}
\thlabel{def:coherent-logic}
Fix a signature $\L$. A \emph{coherent formula} or \emph{positive existential formula} in $\L$ is one that is obtained from combining atomic formulas using $\wedge$, $\vee$, $\top$, $\bot$ and $\exists$. An \emph{h-inductive sentence} is a sentence of the form $\forall \bar{x}(\phi(\bar{x}) \to \psi(\bar{x}))$ where $\phi$ and $\psi$ are coherent formulas. A \emph{coherent theory} or \emph{positive theory} is a set of h-inductive sentences.
\end{definition}
\begin{convention}
\thlabel{conv:everything-is-coherent}
Whenever we say ``formula'' or ``theory'', we mean ``coherent formula'' and ``coherent theory'' respectively. We allow empty models. All semantics refer to the classical semantics (i.e.\ in $\Set$), unless explicitly stated otherwise.
\end{convention}
We can encode any function symbol as its graph using a relation symbol. Having only relation symbols simplifies things, and we lose no expressive power.
\begin{convention}
\thlabel{conv:relational-signature}
Every signature we consider is purely relational.
\end{convention}
Any first-order theory can be presented as a coherent theory. This is done through a process called \emph{Morleyisation}, see just after Fact 2.7 in \cite{haykazyan_spaces_2019}. We recall the following definition from there as well, giving a name to the coherent theories that are essentially first-order theories.
\begin{definition}
\thlabel{def:positively-model-complete}
A coherent theory $T$ is called \emph{positively model complete} if for every formula $\phi(x)$ there is a formula $\psi(x)$ such that $T \models \forall \bar{x}(\neg \phi(\bar{x}) \leftrightarrow \psi(\bar{x}))$.
\end{definition}
Recall that $T \models \varphi$ means that $\varphi$ is true in every model of $T$. We emphasise that we really are interested in all models of $T$, rather than just the existentially closed ones. We refer once more to \thref{rem:existentially-closed-positive-logic} for a discussion about the different semantics.
\subsection{Spectral spaces}
\label{subsec:spectral-spaces}
For an extensive treatment of spectral spaces, see \cite{dickmann_spectral_2019}.
\begin{definition}
\thlabel{def:compact-open-sets}
For a topological space $X$, write $\KO(X)$ for the partial order of compact open subsets. As such, we consider $\KO(X)$ as a category.
\end{definition}
\begin{definition}
\thlabel{def:sober-space}
Let $X$ be a topological space. A closed set $A \subseteq X$ is called \emph{irreducible} (sometimes also called \emph{hyperconnected}) if it cannot be written as the union of two proper closed subsets. We call $X$ a \emph{sober} space if every irreducible closed set is the closure of exactly one point.
\end{definition}
\begin{definition}
\thlabel{def:spectral-space}
A topological space $X$ is called \emph{spectral} if it is compact, $T_0$, sober and such that $\KO(X)$ is closed under finite intersections and forms a basis of the topology. A continuous function $f: X \to Y$ of spectral spaces is called a \emph{spectral map} if the preimage of a compact open set is compact open. We define $\Spec$ to be the category of spectral spaces and spectral maps.
\end{definition}
\begin{fact}[{\cite[1.3.4 and 1.3.20]{dickmann_spectral_2019}}]
\thlabel{fact:hausdorff-spectral-spaces}
A spectral space is $T_1$ iff it is Hausdorff iff it is a Stone space.
\end{fact}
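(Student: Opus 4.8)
The three properties sit in a trivial chain of implications: a Stone space is Hausdorff by definition (\thref{fact:hausdorff-spectral-spaces} compares it to the definition of a spectral space), and every Hausdorff space is $T_1$. So the entire statement reduces to the single implication that a $T_1$ spectral space is a Stone space, and this is where all the work lies, since it must upgrade the weak separation axiom $T_1$ all the way to zero-dimensional compact Hausdorffness.

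To prove $T_1 \Rightarrow$ Stone, the plan is to exploit the specialization order together with the patch (constructible) topology. Recall that the specialization order on $X$ is given by $x \leq y$ iff $x \in \overline{\{y\}}$; by $T_0$ this is a genuine partial order, and every open set of $X$ is an up-set for it (if $x \in O$ with $O$ open and $x \leq y$, then $y \in O$). The hypothesis that $X$ is $T_1$ says precisely that $\overline{\{x\}} = \{x\}$ for every $x$, i.e.\ the specialization order is trivial. I would then bring in the patch topology, generated by $\KO(X)$ together with the complements of its members as a subbasis. For any spectral space the patch topology is a compact, Hausdorff, zero-dimensional (hence Stone) refinement of the original topology, and the key structural fact, which uses both compactness and soberness of $X$, is that a subset is open in the original topology if and only if it is patch-open and an up-set in the specialization order.

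Putting these together, once the specialization order is trivial the up-set condition becomes vacuous, so every patch-open set is already open in $X$; the two topologies therefore coincide and $X$ is a Stone space. The main obstacle is exactly the cited structural fact that the original opens are precisely the patch-open up-sets, which is equivalent, and perhaps more concrete, to the claim that in a $T_1$ spectral space every compact open set is closed, hence clopen. Granting this, $\KO(X)$ becomes a basis of clopen sets, and together with the given compactness and the resulting Hausdorffness (distinct points are separated by a basic compact open, now clopen, and its complement) this yields exactly a Stone space. Dually, under the Stone duality $\DLat^\op \simeq \Spec$ this is the assertion that the distributive lattice $\KO(X)$ is a Boolean algebra precisely when its prime filters are pairwise incomparable, which is the lattice-theoretic shadow of the $T_1$ condition.
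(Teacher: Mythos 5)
Your proposal is correct, but it cannot be compared step-by-step with the paper, because the paper offers no proof at all: \thref{fact:hausdorff-spectral-spaces} is stated as a Fact with a citation to \cite[1.3.4 and 1.3.20]{dickmann_spectral_2019}, and all the work is delegated to that reference. Your reduction is the right one (Stone $\Rightarrow$ Hausdorff $\Rightarrow$ $T_1$ is definitional, so only $T_1 \Rightarrow$ Stone needs an argument), and your route through the constructible (patch) topology and the specialization order is a standard, correct way to close the cycle: since closed sets are down-sets for the order $x \leq y$ iff $x \in \overline{\{y\}}$, the theorem that a patch-open set is open iff it is an up-set (equivalently, that proconstructible sets closed under specialization are closed) makes the two topologies coincide as soon as $T_1$ trivialises the order, and the patch topology is always Boolean. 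Be aware, though, of where the weight of the argument now sits: patch-compactness and the ``open $=$ patch-open up-set'' theorem are themselves nontrivial results (proved via prime-filter/ultrafilter compactness arguments in the same book, in its Section 1.5), so your proof is correct relative to that machinery rather than elementary; your alternative formulation --- in a $T_1$ spectral space every compact open is closed, hence $\KO(X)$ is a basis of clopens, giving Hausdorffness and zero-dimensionality directly --- is the cleaner way to phrase the finish, but proving that compact opens are closed again needs the same compactness input (e.g.\ that $\overline{U}$ is the set of specializations of points of $U$ for $U$ proconstructible). One cosmetic slip: in your first sentence you cite \thref{fact:hausdorff-spectral-spaces} itself where you presumably meant the definition of a spectral space, \thref{def:spectral-space}; as written the reference is circular, though harmless since it is only used to recall a definition.
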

We write $\DLat$ for the category of distributive lattices with  distributive lattice homomorphisms. There is a duality $\DLat^\op \simeq \Spec$, see for example \cite[Section II.3]{johnstone_stone_1992} or \cite[Section 3.2]{dickmann_spectral_2019}, called the Stone duality for distributive lattices.

Let $L$ be a distributive lattice, then the space associated to it is $\SpecSpace(L)$, the \emph{spectrum of $L$}. Its points are prime filters on $L$, and the sets
\[
\{ F \in \SpecSpace(L) : a \in F \},
\]
where $a \in L$, form a basis of compact open sets. A homomorphism $f: L \to L'$ of distributive lattices, induces a spectral map $\SpecSpace(L') \to \SpecSpace(L)$ by sending a prime filter $F'$ on $L'$ to $f^{-1}(F')$.

Conversely, given a spectral space $X$, the corresponding distributive lattice is given by $\KO(X)$ and a spectral map $f: X \to X'$ induces a homomorphism of distributive lattices $f^{-1}: \KO(X') \to \KO(X)$.
\subsection{2-categories}
\label{subsec:2-categories}
For an extensive treatment of 2-categories see \cite[Section B1.1]{johnstone_sketches_2002_v1} or \cite{johnson_2-dimensional_2020}. In this paper we will only be interested in strict 2-categories and strict 2-functors.
\begin{convention}
\thlabel{conv:2-category}
We use the font $\mathfrak{Category}$ for (strict) 2-categories, and $\mathbf{Category}$ for (normal) categories. We denote by $\mathfrak{Category}^\op$ the 2-category where all 1-cells are reversed, but the direction of the 2-cells does not change.
\end{convention}
In addition to objects (0-cells) and arrows (1-cells), a 2-category also has 2-cells, which are morphisms between 1-cells. This allows for the notion of \emph{equivalence} in a 2-category: two objects $A$ and $B$ are equivalent if there are 1-cells $f: A \to B$ and $g: B \to A$ such that their compositions are isomorphic, via 2-cells, to the identity. So for example, bi-interpretability of theories is the same thing as being equivalent in the appropriate 2-category, see \thref{rem:bi-interpretation-is-equivalence}.

Note that there are two different ways of composing 2-cells. The first, and most obvious, way is called \emph{vertical composition}: given 1-cells $f, g, h: A \to B$ and 2-cells $\alpha: f \Rightarrow g$ and $\beta: g \Rightarrow h$ we can form a 2-cell $\beta \alpha: f \Rightarrow h$. Then there is also \emph{horizontal composition}, which works as follows. Suppose we have a diagram as below:
\[
\begin{tikzcd}
A \arrow[rr, "f", bend left] \arrow[rr, "f'"', bend right] & \Downarrow \alpha & B \arrow[rr, "g", bend left] \arrow[rr, "g'"', bend right] & \Downarrow \beta & C
\end{tikzcd}
\]
Then the horizontal composition of $\alpha$ and $\beta$ yields a 2-cell $\beta * \alpha: gf \Rightarrow g'f'$. Of course, these compositions are subject to certain axioms, but these are not relevant for this paper and so we will not go into them here.

Throughout the paper we have simplified results to normal categories as well. This should help if the reader is less familiar with 2-categories.
\section{Category of theories and interpretations}
\label{sec:category-of-theories-and-interpretations}
\begin{definition}
\thlabel{def:interpretation}
Let $T$ and $T'$ be theories in signatures $\L$ and $\L'$ respectively. An \emph{interpretation} $(\Gamma, k): T \to T'$, where $k \geq 1$, assigns to each $n$-ary relation symbol $R$ in $T$ (including the binary symbol for equality) an $nk$-ary $\L'$-formula $\Gamma(R)$. This can be extended uniquely to a function sending $\L$-formulas to $\L'$-formulas. We then also require for all $\L$-formulas $\phi(\bar{x})$ and $\psi(\bar{x})$ that precisely have free variables $\bar{x}$ (i.e.\ they all appear in the formulas):
\[
T \models \forall \bar{x}(\phi(\bar{x}) \to \psi(\bar{x})) \implies
T' \models \forall \bar{x}_1 \ldots \bar{x}_n(\Gamma(\phi)(\bar{x}_1, \ldots, \bar{x}_n) \to \Gamma(\psi)(\bar{x}_1, \ldots, \bar{x}_n)).
\]
An interpretation $(\Gamma, 1): T \to T'$ that interprets equality as equality is called a \emph{strong interpretation}, and we simplify the notation to $\Gamma: T \to T'$.
\end{definition}
What we call strong interpretation is just called ``interpretation'' in \cite[Definition 4.3]{haykazyan_spaces_2019}. The same name is also used in topos theory for something in between our notions of strong interpretation and interpretation, see \thref{rem:different-notions-of-interpretation}. However, the model-theoretic notion of interpretation for first-order logic already exists and does in fact coincide with our definition for positively model complete theories, see e.g.\ \cite[Chapter 5]{hodges_model_1993}. 
\begin{remark}
\thlabel{rem:interpretation-models}
Let $(\Gamma, k): T \to T'$ be an interpretation and let $M \models T'$. Then we can define a model $\Gamma^*(M)$ as follows. The formula $\Gamma(x = y)$ defines an equivalence relation on $A = \{\bar{a} \in M^k : M \models \Gamma(x = x)(\bar{a})\}$. As underlying set for $\Gamma^*(M)$ we take $A / \Gamma(x = y)$. An $n$-ary relation symbol $R$ is then interpreted as
\[
\{([\bar{a}_1], \ldots, [\bar{a}_n]) \in \Gamma^*(M)^n : M \models \Gamma(R)(\bar{a}_1, \ldots, \bar{a}_n)\},
\]
where $[\bar{a}]$ denotes the equivalence class of $\bar{a}$. The fact that $(\Gamma, k)$ is an interpretation makes this well-defined. By induction we see that $\Gamma^*(M) \models \phi([\bar{a}_1], \ldots, [\bar{a}_n])$ if and only if $M \models \Gamma(\phi)(\bar{a}_1, \ldots, \bar{a}_n)$. So in particular, $\Gamma^*(M)$ is a model of $T$.

For a strong interpretation $\Gamma: T \to T'$, the underlying set of $\Gamma^*(M)$ is the same as for $M$ itself.
\end{remark}
\begin{remark}
\thlabel{rem:formulas-match-arity-trick}
With the notation of \thref{def:interpretation}: given any $\L$-formula $\phi(\bar{x})$, where \emph{not} all $\bar{x}$ appear, we can easily make them appear by considering the equivalent formula $\phi(\bar{x}) \wedge \bar{x} = \bar{x}$ (where $\bar{x} = \bar{x}$ is short for a conjunction of equalities). The point of course is that $\Gamma(\phi)$ and $\Gamma(\phi \wedge \bar{x} = \bar{x})$ may not be equivalent, because $\Gamma(x=x)$ may not be the entire domain.
\end{remark}
\begin{definition}
\thlabel{def:morphism-of-interpretations}
Let $(\Gamma, k), (\Gamma', k'): T \to T'$ be interpretations. A \emph{morphism of interpretations} $\theta: (\Gamma, k) \to (\Gamma', k')$ is a formula $\theta(x_1, \ldots, x_k, y_1, \ldots, y_{k'})$ in the language of $T'$ such that:
\begin{enumerate}[label=(\arabic*)]
\item $T' \models \forall \bar{x}(\Gamma(x = x)(\bar{x}) \to \exists \bar{y}\theta(\bar{x}, \bar{y}))$,
\item $T' \models \forall \bar{x}\bar{y}(\theta(\bar{x}, \bar{y}) \to \Gamma(x = x)(\bar{x}) \wedge \Gamma'(y = y)(\bar{y}))$,
\item $T' \models \forall \bar{x} \bar{x}' \bar{y}(\theta(\bar{x}, \bar{y}) \wedge \Gamma(x=x')(\bar{x}, \bar{x}') \to \theta(\bar{x}', \bar{y}))$,
\item $T' \models \forall \bar{x} \bar{y} \bar{y}'(\theta(\bar{x}, \bar{y}) \wedge \Gamma'(y=y')(\bar{y}, \bar{y}') \to \theta(\bar{x}, \bar{y}'))$,
\item for every formula $\phi(x_1, \ldots, x_n)$ in the language of $T$, we have:
\[
T' \models \forall \bar{x}_1 \ldots \bar{x}_n \bar{y}_1 \ldots \bar{y}_n(\Gamma(\phi)(\bar{x}_1, \ldots, \bar{x}_n) \wedge \bigwedge_{i = 1}^n \theta(\bar{x}_i, \bar{y}_i) \to \Gamma'(\phi)(\bar{y}_1, \ldots, \bar{y}_n)).
\]
This simplifies to $T' \models \Gamma(\phi) \to \Gamma'(\phi)$ for formulas without free variables.
\end{enumerate}
Two morphisms of interpretations are considered to be equal if they are equivalent formulas, modulo the relevant theory.

Given morphisms $\theta(\bar{x}, \bar{y}): (\Gamma, k) \to (\Gamma', k')$ and $\eta(\bar{y}, \bar{z}): (\Gamma', k') \to (\Gamma'', k'')$, we define their composition to be $\exists \bar{y}(\theta(\bar{x}, \bar{y}) \wedge \eta(\bar{y}, \bar{z}))$. One easily checks that this defines a morphism $(\Gamma, k) \to (\Gamma'', k'')$.
\end{definition}
\begin{remark}
\thlabel{rem:morphism-of-interpretations-gives-homomorphism-of-models}
Let $M$ be a model of $T'$ and use the notation from \thref{def:morphism-of-interpretations}. Then $\theta$ defines a homomorphism $f_\theta: \Gamma^*(M) \to \Gamma'^*(M)$. For an equivalence class $[\bar{a}] \in \Gamma^*(M)$ we let $[\bar{b}] \in \Gamma'^*(M)$ be such that $M \models \theta(\bar{a}, \bar{b})$, and we set $f_\theta([\bar{a}]) = [\bar{b}]$.
\end{remark}
\begin{definition}
\thlabel{def:2-category-cohtheory}
The \emph{2-category of theories} $\CohTheoryBi$ has as objects theories, as 1-cells interpretations and as 2-cells morphisms of interpretations. Horizontal composition is defined as follows. Consider the following diagram
\[
\begin{tikzcd}
T \arrow[rr, "{(\Gamma, k)}", bend left] \arrow[rr, "{(\Gamma', k')}"', bend right] & \Downarrow \theta & T' \arrow[rr, "{(\Delta, \ell)}", bend left] \arrow[rr, "{(\Delta', \ell')}"', bend right] & \Downarrow \eta & T''
\end{tikzcd}
\]
Then $\eta * \theta$ is the following formula:
\[
\exists \bar{y}_1 \ldots \bar{y}_{k'} \left( \Delta(\theta)(\bar{x}_1, \ldots, \bar{x}_k, \bar{y}_1, \ldots, \bar{y}_{k'}) \wedge \bigwedge_{i = 1}^{k'} \eta(\bar{y}_i, \bar{z}_i) \right),
\]
where each $\bar{x}_i$ and $\bar{y}_i$ is of length $\ell$ and each $\bar{z}_i$ is of length $\ell'$.

The \emph{category of theories} $\CohTheory$ has as objects theories and as arrows strong interpretations.
\end{definition}
Checking that $\CohTheoryBi$ satisfies the axioms of a strict 2-category is lengthy, but straightforward and we omit the proof.
\begin{remark}
\thlabel{rem:bi-interpretation-is-equivalence}
An equivalence of two theories $T$ and $T'$ in $\CohTheoryBi$ consists of interpretations $(\Gamma, k): T \to T'$ and $(\Gamma', k'): T' \to T$ such that both compositions are isomorphic, via 2-cells, to the relevant identity interpretations. Such 2-cells correspond to definable isomorphisms (see \thref{rem:morphism-of-interpretations-gives-homomorphism-of-models}). So equivalences in $\CohTheoryBi$ are precisely bi-interpretations.
\end{remark}
\section{Category of type space functors}
\label{sec:category-of-type-space-functors}
We start this section by recalling some definitions and facts. From \thref{def:type-space-functor} on we make and treat new definitions.
\begin{definition}
\thlabel{def:type}
Let $T$ be a theory, let $M$ be a model of $T$ and let $\bar{a} \in M$ be a tuple of elements. The \emph{(coherent) type of $\bar{a}$} is defined as:
\[
\ctp^M(\bar{a}) = \{ \phi(\bar{x}) : M \models \phi(\bar{a}) \}.
\]
A \emph{type} in free variables $\bar{x}$ is a set of formulas $p(\bar{x})$ such that $p(\bar{x}) = \ctp^M(\bar{a})$ for some model $M$ of $T$ and some $\bar{a} \in M$.
\end{definition}
The category of all natural numbers together with all maps between them is a skeleton of the category of finite sets $\FinSet$, so we can identify those categories.
\begin{definition}
\thlabel{def:type-space-functor-of-a-theory}
We define the \emph{type space functor} $\S(T): \FinSet^\op \to \Spec$ of a theory $T$ as follows.

Let $n \geq 0$ be a natural number. Then we define $\S_n(T)$ to be the set of all types in $n$ free variables. For a formula $\phi$ we write $[\phi] = \{ p \in S_n(T) : \phi \in p \}$, and we topologise $\S_n(T)$ by taking these sets as basic opens. We call $\S_n(T)$ the \emph{type space} of $T$ in $n$ free variables.

For a natural number $n$, we write $\n = \{1, \ldots, n\}$. For a function $f: \n \to \m$, we define a function $\S_f(T): \S_m(T) \to \S_n(T)$ as follows:
\[
\S_f(T)(p) = \{ \phi(x_1, \ldots, x_n) : \phi(x_{f(1)}, \ldots, x_{f(n)}) \in p(x_1, \ldots, x_m) \}.
\]
Semantically, for a realisation $\bar{a} \in M$ of $p$, we get $\S_f(T)(p) = \ctp^M(a_{f(1)}, \ldots, a_{f(n)})$.
\end{definition}
The elements in $\mathbf{n}$ are indices of variables, for which we prefer to start at $1$ rather than $0$. The more common notation for $\n$ would be $[n]$, but we do not want to confuse it with the notation $[\phi]$ for compact open set in a type space.
\begin{remark}
\thlabel{rem:existentially-closed-positive-logic}
In \cite{ben-yaacov_positive_2003, ben-yaacov_thickness_2003} type space functors in positive logic are also studied, but for different semantics. They only consider existentially closed models. Recall that a model $M$ of some theory $T$ is called \emph{existentially closed} (abbreviated as \emph{e.c.}) if for any $\bar{a} \in M$ and any formula $\phi(\bar{x})$ the following holds: whenever there is a homomorphism $f: M \to N$, where $N \models T$, such that $N \models \phi(f(\bar{a}))$ then already $M \models \phi(\bar{a})$.

The types realised in e.c.\ models are exactly the maximal types. So in this approach the following set of types is considered:
\[
M_n(T) = \{ \ctp^M(\bar{a}) : \bar{a} \text{ is an $n$-tuple in an e.c.\ model $M$ of $T$} \}.
\]
Note that we used a different notation from \cite{ben-yaacov_positive_2003, ben-yaacov_thickness_2003} to avoid clashes with our own notation ($\S_n(T)$ is used there too). This set $M_n(T)$ is then also topologised differently, namely by taking the sets $[\phi] = \{ p \in M_n(T) : \phi \in p \}$ as a basis of \emph{closed} sets, rather than open sets. This way we end up with a compact $T_1$ space, where the closed sets correspond to partial types (i.e.\ consistent sets of formulas).

A lot of model theory has been succesfully developed in this setting (e.g.\ \cite{ben-yaacov_positive_2003, ben-yaacov_thickness_2003, ben-yaacov_fondements_2007, poizat_positive_2018}). However, as Haykazyan notes in \cite{haykazyan_spaces_2019} this means that in the above setup there is no distinction between definable and type-definable sets.

Interestingly there is a link between the two approaches, as studied in the final two sections of \cite{haykazyan_spaces_2019}. For example, an omitting types theorem is proved for e.c.\ models, using the spaces $\S_n(T)$ rather than $M_n(T)$.
\end{remark}
\begin{fact}
\thlabel{fact:properties-type-space-functor-of-a-theory}
Let $T$ be a theory, then:
\begin{enumerate}[label=(\roman*)]
\item $U \subseteq \S_n(T)$ is compact and open iff $U = [\phi]$ for some formula $\phi$,
\item $T$ is positively model complete iff $\S_n(T)$ is a Stone space for all $n \geq 0$,
\item for any $f: \n \to \m$, the map $\S_f(T)$ is spectral and open.
\end{enumerate}
\end{fact}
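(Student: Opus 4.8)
The plan is to prove the three parts roughly in the order they depend on one another, with (i) and the openness/spectrality of (iii) being the technical core, and (ii) following from (i) together with the characterisation in Fact~\ref{fact:hausdorff-spectral-spaces}.

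First I would establish (i), the identification of the compact open sets as exactly the basic opens $[\phi]$. The basic opens $[\phi]$ are closed under finite intersection since $[\phi] \cap [\psi] = [\phi \wedge \psi]$ and $[\top] = \S_n(T)$, so they really do form a basis, and each $[\phi]$ is open by definition. For compactness I would show $\S_n(T)$ itself is compact via a consistency/compactness argument: a type is a maximally-realisable-consistent set of coherent formulas, so an open cover by basic sets $[\phi_i]$ failing to have a finite subcover would produce a finitely-satisfiable set of ``negated'' conditions, and I would need to extract a type avoiding all the $[\phi_i]$, contradicting that they cover. The subtlety in positive logic is that we cannot freely negate formulas, so rather than classical compactness I expect to invoke the Deligne completeness theorem (\thref{fact:deligne-completeness-theorem}) or a direct argument that every finitely-consistent coherent condition is realised in some model. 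Granting compactness of the whole space, each $[\phi]$ is compact as a closed-in-compact set is not automatic (they are open, not closed), so instead I would argue each $[\phi]$ is compact directly by the same realisability argument applied relative to $\phi$. Conversely, any compact open $U$ is a union of basic opens $[\phi_i]$, and compactness gives a finite subcover, so $U = [\phi_1] \cup \cdots \cup [\phi_k] = [\phi_1 \vee \cdots \vee \phi_k]$, using closure of coherent formulas under $\vee$. This proves (i).

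For (ii) I would use Fact~\ref{fact:hausdorff-spectral-spaces}: $\S_n(T)$ is Stone iff it is $T_1$. If $T$ is positively model complete, then for each $\phi$ there is a $\psi$ with $T \models \forall\bar x(\neg\phi \leftrightarrow \psi)$, so the complement of each $[\phi]$ is again some $[\psi]$, hence open; thus every basic open is clopen and the space is $T_1$ (indeed Stone). Conversely, if every $\S_n(T)$ is Stone, then each $[\phi]$ is compact open in a Stone space, hence clopen, so its complement is compact open and by (i) equals some $[\psi]$; this $\psi$ witnesses positive model completeness for $\phi$. The only care needed is matching up free variables correctly, which is bookkeeping.

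For (iii), continuity and the spectral property of $\S_f(T)$ reduce to computing preimages of basic opens. The key computation is $\S_f(T)^{-1}[\phi(x_1,\dots,x_n)] = [\phi(x_{f(1)},\dots,x_{f(n)})]$, which is immediate from the definition of $\S_f(T)(p)$; since the right-hand side is a basic compact open, continuity and spectrality (preimage of compact open is compact open) both follow at once from (i). The genuinely nontrivial claim is \emph{openness} of $\S_f(T)$: I must show the image of a compact open $[\phi]$ is open. Here the relevant phenomenon is that $\S_f(T)$ is, up to the $\DLat$-duality, dual to the substitution homomorphism on Lindenbaum--Tarski algebras, and openness corresponds to this substitution map having a left adjoint given by existential quantification along $f$. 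Concretely I would show $\S_f(T)[\phi(\bar x)] = [\exists \bar y\,\phi]$ for the appropriate existentially quantified formula, the point being that coherent logic is closed under $\exists$, so the image of a basic open is again basic open. Proving this image equals $[\exists\bar y\,\phi]$ amounts to: a type $q$ lies in the image iff it extends to a type $p$ with $\phi \in p$ and $\S_f(T)(p)=q$, and realisability (again via Deligne completeness, or a direct model-theoretic witness argument) lets me produce such a $p$ from any realisation of the existential formula. I expect this openness argument to be the main obstacle, precisely because it is where the availability of $\exists$ in coherent logic and a completeness/realisation theorem must be combined; the continuity and spectrality parts are formal once (i) is in hand.
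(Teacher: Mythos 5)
Your proposal is correct in outline, but it takes a genuinely different route from the paper: the paper's proof of this Fact is essentially a citation --- (i) is \cite[Lemma 3.4]{haykazyan_spaces_2019}, (ii) is \cite[Proposition 3.8]{haykazyan_spaces_2019} (positive model completeness iff Hausdorff) combined with \thref{fact:hausdorff-spectral-spaces}, and (iii) is precisely \cite[Proposition 4.1]{haykazyan_spaces_2019} --- whereas you reconstruct the arguments from scratch. Your reconstruction matches what the paper records elsewhere (the image/preimage formulas of \thref{rem:explicit-description-preimage-direct-image}), so the content is the same as in the cited source; the citation buys brevity, your version buys self-containedness and makes visible exactly where compactness enters. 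One small slip in your setup: types here are not \emph{maximal} realisable sets of coherent formulas (maximality is the e.c.\ semantics of \thref{rem:existentially-closed-positive-logic}); they are just sets of the form $\ctp^M(\bar{a})$. Your argument never uses maximality, so this is harmless.

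Two of your hedges should be resolved differently than you anticipate. First, the worry that ``we cannot freely negate formulas'' and that classical compactness is therefore unavailable is misplaced: by \thref{conv:everything-is-coherent} all semantics are classical (models in $\Set$), so although a type consists only of coherent formulas, the ambient logic is full first-order logic and the classical compactness theorem applies directly to $T \cup \{\phi(\bar{c})\} \cup \{\neg\psi_i(\bar{c}) : i \in I\}$; this is exactly how the cited Lemma 3.4 goes (the paper even says ``(i) follows from compactness''). Invoking \thref{fact:deligne-completeness-theorem} instead would not work: Deligne's theorem equates finite deducibility of coherent sequents with semantic entailment, and cannot by itself produce a realisation of infinitely many negated conditions; your fallback (``every finitely-consistent condition is realised in some model'') is the right tool, and it \emph{is} classical compactness. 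Second, the openness in (iii) is not the obstacle you expect and needs no completeness theorem at all: the image formula is $\exists y_1 \ldots y_m(\phi(y_1, \ldots, y_m) \wedge x_1 = y_{f(1)} \wedge \ldots \wedge x_n = y_{f(n)})$ --- note that the equalities identifying variables are essential, not just ``$\exists \bar{y}\, \phi$'', since $f$ need be neither injective nor surjective --- and if a type $q$ contains this formula, then any realisation of $q$ in a model $M$ already has witnesses inside $M$ itself, and the type of those witnesses is a point of $[\phi]$ mapping onto $q$ under $\S_f(T)$. So openness is a one-line witness argument, while the genuine compactness input sits entirely in (i).
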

\begin{proof}
Fact (i) follows from compactness and is \cite[Lemma 3.4]{haykazyan_spaces_2019}. For (ii): note that $T$ is  positively model complete if and only if $\S_n(T)$ is Hausdorff for all $n \geq 0$, by \cite[Proposition 3.8]{haykazyan_spaces_2019}. Which is equivalent to $\S_n(T)$ being a Stone space for all $n \geq 0$, by \thref{fact:hausdorff-spectral-spaces}. Finally, (iii) is precisely \cite[Proposition 4.1]{haykazyan_spaces_2019}.
\end{proof}
\begin{remark}
\thlabel{rem:explicit-description-preimage-direct-image}
In the proof of \cite[Proposition 4.1]{haykazyan_spaces_2019} an explicit description is given of the images and preimages of compact open sets. These explicit descriptions are useful, so we repeat them here:
\[
\S_f(T)([\phi(x_1, \ldots, x_m)]) = [\exists y_1 \ldots y_m(\phi(y_1, \ldots, y_m) \wedge x_1 = y_{f(1)} \wedge \ldots \wedge x_n = y_{f(n)})],
\]
and
\[
\S_f(T)^{-1}([\psi(x_1, \ldots, x_n)]) = [\psi(x_{f(1)}, \ldots, x_{f(n)})].
\]
Taking the direct image corresponds to introducing existential quantifiers and identifying variables. Taking the preimage corresponds to substitution of variables.
\end{remark}
\begin{remark}
\thlabel{rem:adjunctions-in-type-space-functor}
For every $f: \n \to \m$, we have a functor $\S_f(T)^{-1}: \KO(\S_n(T)) \to \KO(\S_m(T))$, because $\S_f(T)$ is a spectral map. We also have a functor $\S_f(T): \KO(\S_m(T)) \to \KO(\S_n(T))$ by taking direct images, because $\S_f(T)$ is an open continuous map. This gives rise to an adjoint pair of functors $\S_f(T) \dashv \S_f(T)^{-1}$.
\end{remark}
For a logical formula, it does not matter if we first quantify and identify variables and then substitute, or the other way around. This can be expressed by what is called the Beck-Chevalley condition (see e.g.\ \cite[Section IV.9]{maclane_sheaves_1994}). We will give a simplified definition for our situation.
\begin{definition}
\thlabel{def:beck-chevalley}
Suppose that we have a commuting square of open spectral maps as below on the left. Then we say that that square satisfies the \emph{Beck-Chevalley condition} if the induced square on the right commutes.
\[
\begin{tikzcd}
A \arrow[r, "f"] \arrow[d, "g"'] & B \arrow[d, "h"] \arrow[rd, "\Rightarrow" description, phantom] & \KO(A) \arrow[d, "g"'] & \KO(B) \arrow[l, "f^{-1}"'] \arrow[d, "h"] \\
C \arrow[r, "k"']                & D                                                               & \KO(C)                 & \KO(D) \arrow[l, "k^{-1}"]                
\end{tikzcd}
\]
\end{definition}
In the notation of \thref{def:beck-chevalley} we have for every $U \in \KO(B)$ that
\[
gf^{-1}(U) \subseteq gf^{-1}h^{-1}h(U) = gg^{-1}k^{-1}h(U) \subseteq k^{-1}h(U).
\]
Here we used the commutativity of the original square for the equality in the middle, because from that we immediately get $f^{-1}h^{-1} = g^{-1}k^{-1}$. We thus obtain the following fact.
\begin{fact}
\thlabel{fact:beck-chevalley-simplified}
In the notation of \thref{def:beck-chevalley}, the Beck-Chevalley condition is equivalent to having $k^{-1}h(U) \subseteq gf^{-1}(U)$ for all $U \in \KO(B)$.
\end{fact}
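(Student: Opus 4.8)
The plan is to observe that the Beck--Chevalley condition, being the assertion that a square of poset maps commutes, unwinds to a family of pointwise equalities of compact open sets, and that one of the two inclusions making up each equality has already been verified in the paragraph immediately preceding the statement. So the whole content reduces to separating the ``free'' inclusion from the one that actually needs to be assumed.

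First I would spell out what commutativity of the right-hand square in \thref{def:beck-chevalley} means. The two composites $\KO(B) \to \KO(C)$ appearing there are $g \circ f^{-1}$ (pull back along $f$, then push forward along $g$) and $k^{-1} \circ h$ (push forward along $h$, then pull back along $k$). Since $\KO(C)$ is a partial order under inclusion and both composites are monotone maps into it, the square commutes if and only if $gf^{-1}(U) = k^{-1}h(U)$ for every $U \in \KO(B)$. Thus the Beck--Chevalley condition is exactly this family of equalities.

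Next I would invoke the inclusion derived just before the statement, namely $gf^{-1}(U) \subseteq k^{-1}h(U)$ for all $U \in \KO(B)$. That inclusion holds unconditionally: it uses the commutativity of the original square of spaces (which yields $f^{-1}h^{-1} = g^{-1}k^{-1}$) together with the unit $U \subseteq h^{-1}h(U)$ and the counit $gg^{-1}(V) \subseteq V$ of the relevant direct-image/preimage adjunctions from \thref{rem:adjunctions-in-type-space-functor}. Since this inclusion is always available, the equality $gf^{-1}(U) = k^{-1}h(U)$ is equivalent to the reverse inclusion $k^{-1}h(U) \subseteq gf^{-1}(U)$; quantifying over all $U \in \KO(B)$ gives precisely the claimed equivalence.

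I do not expect a genuine obstacle here. The only point requiring care is bookkeeping the directions of the four maps $f, g, h, k$ and their induced actions on compact opens, and confirming that ``the square commutes'' for these poset-valued assignments is exactly the conjunction of two inclusions, one of which has already been established for free.
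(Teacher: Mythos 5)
Your proposal is correct and is essentially the paper's own argument: the paper establishes the same unconditional inclusion $gf^{-1}(U) \subseteq gf^{-1}h^{-1}h(U) = gg^{-1}k^{-1}h(U) \subseteq k^{-1}h(U)$ (via the unit and counit of the direct-image/preimage adjunctions and the identity $f^{-1}h^{-1} = g^{-1}k^{-1}$ from commutativity of the original square) in the paragraph preceding the fact, and then reads off that Beck--Chevalley, being pointwise equality in the poset $\KO(C)$, is equivalent to the reverse inclusion. No gaps.
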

\begin{example}
\thlabel{ex:beck-chevalley-quantification}
Let $T$ be a theory. Let $f: \n \to \mathbf{n+1}$ be the inclusion. Let $g: \n \to \m$ be any function. Let $f': \m \to \mathbf{m+1}$ and $g': \mathbf{n+1} \to \mathbf{m+1}$ be the pushout of $f$ and $g$. That is, $f'$ is the inclusion and $g'$ is the extension of $g$ such that $g'(n+1) = m+1$. Then this induces a commutative square
\[
\begin{tikzcd}
\S_{m+1}(T) \arrow[r, "\S_{g'}(T)"] \arrow[d, "\S_{ f'}(T)"'] & \S_{n+1}(T) \arrow[d, "\S_f(T)"] \\
\S_m(T) \arrow[r, "\S_g(T)"']                                 & \S_n(T)                         
\end{tikzcd}
\]
This square satisfies the Beck-Chevalley condition, which comes down to having
\[
[\exists x_{n+1} \phi(x_{g(1)}, \ldots, x_{g(n)}, x_{n+1})] =
[\exists x_{m+1} \phi(x_{g'(1)}, \ldots, x_{g'(n)}, x_{g'(n+1)})],
\]
for every formula $\phi(x_1, \ldots, x_{n+1})$. Indeed, this equality holds because the right side is just
\[
[\exists x_{m+1} \phi(x_{g(1)}, \ldots, x_{g(n)}, x_{m+1})]
\]
and $m+1$ is not in the range of $g$.
\end{example}
\begin{example}
\thlabel{ex:beck-chevalley-equality}
Let $T$ be a theory. Let $f: \mathbf{n+1} \to \n$ be the identity on $\n$ and $f(n+1) = n$. Let $g: \mathbf{n+1} \to \m$ be any function. Denote by $f': \m \to \k$ and $g': \n \to \k$ the pushout of $f$ and $g$. Again, this pushout induces a commutative square like in \thref{ex:beck-chevalley-quantification}, and again this square satisfies the Beck-Chevalley condition. In this case it comes down to the equality
\begin{align*}
&[\phi(x_{g(1)}, \ldots, x_{g(n)}) \wedge x_{g(n)} = x_{g(n+1)}] &= \\
&[\exists y_1 \ldots y_k (\phi(y_{g'(1)}, \ldots, y_{g'(n)}) \wedge x_1 = y_{f'(1)} \wedge \ldots \wedge x_m = y_{f'(m)}) ],
\end{align*}
for every formula $\phi(x_1, \ldots, x_n)$.

By \thref{fact:beck-chevalley-simplified} we only have to check that the first formula implies the second. So we let $x_1, \ldots, x_m$ be such that $\phi(x_{g(1)}, \ldots, x_{g(n)})$ and $x_{g(n)} = x_{g(n+1)}$. Define $\alpha: \n \to \{x_1, \ldots, x_m\}$ by $\alpha(i) = x_{g(i)}$, and let $\beta: \m \to \{x_1, \ldots, x_m\}$ be given by $\beta(i) = x_i$. Then since $x_{g(n)} = x_{g(n+1)}$ we have that $\alpha f = \beta g$, so by the universal property of the pushout there is $\gamma: \k \to \{x_1, \ldots, x_m\}$. We take $y_i = \gamma(i)$ for all $1 \leq i \leq k$, then one easily verifies that $\phi(y_{g'(1)}, \ldots, y_{g'(n)}) \wedge x_1 = y_{f'(1)} \wedge \ldots \wedge x_m = y_{f'(m)}$ holds, as required.
\end{example}
\begin{proposition}
\thlabel{prop:type-space-functor-beck-chevalley}
Let $T$ be a theory. Any pushout square in $\FinSet$ induces a commutative square under the image of $\S(T)$, which satisfies the Beck-Chevalley condition.
\end{proposition}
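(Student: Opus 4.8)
The commutativity of the induced square is free: a pushout square in $\FinSet$ satisfies $hf = kg$, and applying the contravariant functor $\S(T)$ turns this into $\S_f(T) \circ \S_h(T) = \S_g(T) \circ \S_k(T)$, which is exactly commutativity of the image square. Hence all the content lies in verifying the Beck-Chevalley condition, and my plan is to reduce an arbitrary pushout to the two special cases already treated in \thref{ex:beck-chevalley-quantification} and \thref{ex:beck-chevalley-equality}.

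The reduction rests on two pasting principles. First, a pasting lemma for the Beck-Chevalley condition: if two commuting squares of open spectral maps share an edge and each satisfies the condition, then their horizontal (and, symmetrically, vertical) composite does too. Writing the condition as the equality $k^{-1}h = gf^{-1}$ of maps $\KO(-) \to \KO(-)$ (the commuting right-hand square of \thref{def:beck-chevalley}), this is a one-line substitution using $(f_2 f_1)^{-1} = f_1^{-1} f_2^{-1}$ and the two hypotheses in turn. Second, the usual pasting law for pushouts. I would then use that every morphism of $\FinSet$ is a finite composite of \emph{elementary} maps --- bijections, the coface inclusion $\n \hookrightarrow \mathbf{n+1}$, and the codegeneracy $\mathbf{n+1} \twoheadrightarrow \n$ identifying the last two elements --- since these generate $\FinSet$ (via epi--mono factorisation together with the generation of injections and surjections by cofaces, codegeneracies and permutations).

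Concretely, starting from the span $C \xleftarrow{g} P \xrightarrow{f} B$ defining the square, I factor $f = f_t \circ \cdots \circ f_1$ into elementary maps and build the pushout one factor at a time, pushing $f_1$ out along $g$, then $f_2$ along the resulting map, and so on; by the pasting law for pushouts each intermediate square is a pushout and their composite is the original square. In each intermediate square one leg is elementary while the opposite leg is arbitrary, which is precisely the shape of the examples: a coface leg is handled by \thref{ex:beck-chevalley-quantification}, a codegeneracy leg by \thref{ex:beck-chevalley-equality}, and a bijection leg gives an isomorphism square, for which the Beck-Chevalley condition is trivial since preimage and direct image are then mutually inverse. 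Applying $\S(T)$ sends each intermediate pushout to a Beck-Chevalley square in $\Spec$, and the pasting lemma for the condition assembles these into the condition for the image of the whole square.

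The main obstacle I anticipate is bookkeeping rather than ideas. The examples are stated for the coface and codegeneracy acting on the \emph{last} coordinate, whereas the factorisation produces them at arbitrary positions; this is absorbed either by noting that the computations in the examples are insensitive to the chosen coordinate, or by conjugating with the relevant bijection (a harmless isomorphism square). One must also keep the variances straight: since $\S(T)$ is contravariant every arrow reverses, so a horizontal composite of pushouts becomes a horizontal composite of Beck-Chevalley squares with all arrows reversed, and the pasting lemma must be applied in that orientation.
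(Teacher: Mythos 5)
Your proposal is correct and takes essentially the same approach as the paper: its proof likewise reduces to \thref{ex:beck-chevalley-quantification} and \thref{ex:beck-chevalley-equality} by observing that, up to isomorphism, every map in $\FinSet$ decomposes into such inclusion and identification maps, and then concludes ``by composing squares''. The only difference is that you spell out explicitly the pasting lemma for the Beck-Chevalley condition, the pasting law for pushouts, and the coordinate bookkeeping, all of which the paper leaves implicit.
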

\begin{proof}
In \thref{ex:beck-chevalley-quantification} and \thref{ex:beck-chevalley-equality} we have verified this claim for specific functions $f$. Up to isomorphism, every function in $\FinSet$ decomposes as such functions. So by composing squares we conclude that the claim indeed holds for any pushout square.
\end{proof}
We can now characterise those functors that arise as type space functors.
\begin{convention}
\thlabel{conv:functor-finset-spec}
Let $F: \FinSet^\op \to \Spec$ be a functor, we denote by $F_n$ and $F_f$ the images of $n$ and $f: \n \to \m$ respectively, to match the notation of $\S(T)$.
\end{convention}
\begin{definition}
\thlabel{def:type-space-functor}
A functor $F: \FinSet^\op \to \Spec$ is called \emph{open} if every map in its image is open. Such an open functor is called a \emph{type space functor} if for every pushout in $\FinSet$, the induced square satisfies the Beck-Chevalley condition.
\[
\begin{tikzcd}
\mathbf{a} \arrow[rrd, "\text{pushout}", phantom] &  & \mathbf{b} \arrow[ll, "f"'] \arrow[rd, "\Rightarrow", phantom] & F_a \arrow[rrr, "F_f"] \arrow[d, "F_g"'] \arrow[rrrd, "\text{satisfies B.C.}", phantom] &  &  & F_b \arrow[d, "F_h"] \arrow[rd, "\text{i.e.}", phantom] & \KO(F_a) \arrow[d, "F_g"'] \arrow[rd, "\text{commutes}", phantom] & \KO(F_b) \arrow[l, "F_f^{-1}"'] \arrow[d, "F_h"] \\
\mathbf{c} \arrow[u, "g"]                         &  & \mathbf{d} \arrow[ll, "k"] \arrow[u, "h"']                     & F_c \arrow[rrr, "F_k"']                                                                 &  &  & F_d                                                     & \KO(F_c)                                                          & \KO(F_d) \arrow[l, "F_k^{-1}"]                  
\end{tikzcd}
\]
\end{definition}
The discussion below is interesting in its own right, but not relevant for the rest of the paper, so the reader may skip to after \thref{ex:beck-chevalley-non-surjective} if they so wish. In \cite[Definition 2.18]{ben-yaacov_positive_2003} there is a condition called the ``amalgamation property'' which is related to the Beck-Chevalley condition. Recall that in their setup only maximal types are considered (see \thref{rem:existentially-closed-positive-logic}). If we restrict our setup to consider theories where all types are maximal, we get precisely the theories whose type spaces are $T_1$ and hence Stone spaces (see \thref{fact:hausdorff-spectral-spaces}). In that case we get that the two conditions are equivalent. We provide a proof below and thank the anonymous referee for pointing out this equivalence. Afterwards, in \thref{ex:beck-chevalley-non-surjective}, we show that the two conditions are generally not equivalent in our setting.
\begin{proposition}
\thlabel{prop:beck-chevalley}
Suppose we are given a commuting square of open spectral maps between Stone spaces as follows:
\[
\begin{tikzcd}
A \arrow[r, "f"] \arrow[d, "g"'] & B \arrow[d, "h"] \\
C \arrow[r, "k"']                & D               
\end{tikzcd}
\]
Then the following are equivalent:
\begin{enumerate}[label=(\roman*)]
\item the universal map $u: A \to B \times_D C$ is surjective;
\item the square satisfies the Beck-Chevalley condition.
\end{enumerate}
\end{proposition}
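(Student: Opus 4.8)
The plan is to prove the two implications separately, using the simplified form of the Beck--Chevalley condition from \thref{fact:beck-chevalley-simplified}: condition (ii) says exactly that $k^{-1}h(U) \subseteq g f^{-1}(U)$ for every $U \in \KO(B)$, the reverse inclusion $g f^{-1}(U) \subseteq k^{-1}h(U)$ holding automatically. Recall that the universal map is $u(a) = (f(a), g(a))$, so its surjectivity means: for every pair $(b,c) \in B \times C$ with $h(b) = k(c)$ there is some $a \in A$ with $f(a) = b$ and $g(a) = c$. Throughout I would use that, since the spaces are Stone, the compact opens coincide with the clopen sets, and the clopen sets containing a given point $p$ intersect in exactly $\{p\}$.

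For (i) $\Rightarrow$ (ii) I would give a direct set-theoretic chase needing no topology. Fix $U \in \KO(B)$ and let $c \in k^{-1}h(U)$, so $k(c) \in h(U)$ and hence $k(c) = h(b)$ for some $b \in U$. Then $(b,c)$ lies in the fibre product $B \times_D C$, so surjectivity of $u$ yields $a \in A$ with $f(a) = b \in U$ and $g(a) = c$. Thus $a \in f^{-1}(U)$ and $c = g(a) \in g f^{-1}(U)$, which is the required inclusion. (Here $g f^{-1}(U)$ really is compact open, since $f$ is spectral and $g$ is an open continuous map, so this is genuinely a statement in $\KO$.)

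For (ii) $\Rightarrow$ (i), which I expect to be the substantive direction, fix $(b,c)$ with $h(b) = k(c)$; the goal is to produce a point of $f^{-1}(b) \cap g^{-1}(c)$. For each clopen $V \ni b$ I would apply the Beck--Chevalley equality to $U = V \in \KO(B)$: since $k(c) = h(b) \in h(V)$ we have $c \in k^{-1}h(V) = g f^{-1}(V)$, so $f^{-1}(V) \cap g^{-1}(c) \neq \emptyset$. Now consider the family $\{\, f^{-1}(V) \cap g^{-1}(c) : V \ni b \text{ clopen} \,\}$ of closed subsets of the compact space $A$. It has the finite intersection property, because a finite intersection of clopen neighbourhoods of $b$ is again a clopen neighbourhood of $b$, and each member is nonempty by the previous step. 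By compactness of $A$ the whole family has nonempty intersection, and since $\bigcap \{\, V : V \ni b \text{ clopen}\,\} = \{b\}$ this intersection equals $f^{-1}(b) \cap g^{-1}(c)$. Any point $a$ in it satisfies $u(a) = (b,c)$, so $u$ is surjective.

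The main obstacle is precisely this passage from the ``finite'' witnesses $f^{-1}(V) \cap g^{-1}(c) \neq \emptyset$ to an actual point of $f^{-1}(b) \cap g^{-1}(c)$. This is where the Stone hypothesis enters twice: once so that the neighbourhoods $V$ are clopen, making $f^{-1}(V)$ closed and the family amenable to a finite-intersection-property argument; and once so that the clopen neighbourhoods of $b$ separate it from every other point, giving $\bigcap_V V = \{b\}$. Over general spectral spaces both of these can fail (points need not be closed, and compact opens need not be closed), which is consistent with the equivalence being claimed only for Stone spaces.
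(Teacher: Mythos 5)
Your proof is correct and takes essentially the same approach as the paper: the (i)~$\Rightarrow$~(ii) direction is the identical diagram chase, and the (ii)~$\Rightarrow$~(i) direction is the same finite-intersection-property/compactness argument, using the Stone hypothesis in the same two places the paper itself highlights. The only cosmetic difference is that you place the single closed set $g^{-1}(c)$ directly into your family of closed subsets of $A$, whereas the paper builds the family from preimages of clopen neighbourhoods of both $b$ and $c$ and invokes $T_1$-separation at the end to conclude $f(a)=b$ and $g(a)=c$.
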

\begin{proof}
We recall that $B \times_D C = \{(b, c) \in B \times C : h(b) = k(c)\}$ and $u(a) = (f(a), g(a))$.

First we prove (i) $\implies$ (ii). Let $U \in \KO(B)$. By \thref{fact:beck-chevalley-simplified} we only need to prove $k^{-1}h(U) \subseteq gf^{-1}(U)$. So let $c \in k^{-1}h(U)$. Because we have $k(c) \in h(U)$ there must be $b \in U$ such that $h(b) = k(c)$. Then $(b, c) \in B \times_D C$, so by surjectivity there is $a \in A$ with $u(a) = (b, c)$. This means that $f(a) = b$ and $g(a) = c$. As $b \in U$ we thus have $a \in f^{-1}(U)$ and hence $c \in gf^{-1}(U)$, as required.

Now we prove the other direction, (ii) $\implies$ (i). Let $(b, c) \in B \times_D C$. We claim that for any $b \in B_0 \in \KO(B)$ and $c \in C_0 \in \KO(C)$ the set $f^{-1}(B_0) \cap g^{-1}(C_0)$ is nonempty.

Let $B_0$ and $C_0$ be as in the claim. Then we have $k(c) = h(b) \in h(B_0)$, so $c \in k^{-1}h(B_0)$. By assumption we have $k^{-1}h(B_0) = gf^{-1}(B_0)$, so $c \in gf^{-1}(B_0)$. Let $a \in f^{-1}(B_0)$ such that $g(a) = c$. Then $a \in g^{-1}(c) \subseteq g^{-1}(C_0)$, which proves the claim.

We set
\[
\U = \{f^{-1}(B_0) : b \in B_0 \in \KO(B)\} \cup \{g^{-1}(C_0) : c \in C_0 \in \KO(C)\}.
\]
It then follows from the claim, together with the fact that $\KO(B)$ and $\KO(C)$ are closed under finite intersections, that $\U$ has the finite intersection property. That is, any finite collection of sets in $\U$ has nonempty intersection. Because we assumed the spaces involved to be Stone spaces, the sets in $\KO(A)$ are clopen. So $\bigcap \U$ is nonempty and we find $a \in \bigcap \U$. By construction we then have that $f(a) \in B_0$ for every $b \in B_0 \in \KO(B)$. It then follows from the $T_1$ separation axiom (again, using that we are dealing with Stone spaces), together with the fact that $\KO(B)$ is a basis for the topology on $B$, that $f(a) = b$. Similarly we get $g(a) = c$. So $u(a) = (b, c)$, and we conclude that $u$ is indeed surjective.
\end{proof}
In the above proof we used the assumption that the spaces are Stone spaces in two places. The first was to produce an element in $\bigcap \U$, which can in fact be done in any spectral space, see \cite[Theorem 1.3.14]{dickmann_spectral_2019}. The second use, namely of the $T_1$ separation axiom, is essential, as shown in the example below. Note that this also means that the direction (i) $\implies$ (ii) goes through for any spectral spaces.
\begin{example}
\thlabel{ex:beck-chevalley-non-surjective}
We will show that the requirement of the spaces being Stone spaces cannot be dropped in \thref{prop:beck-chevalley}. More precisely, we will construct a theory $T$ and a pushout in $\FinSet$ such that the image of that pushout under $\S(T)$ will not satisfy the surjectivity condition in \thref{prop:beck-chevalley}(i). Recall that on the other hand that square will satisfy the Beck-Chevalley condition, as shown in \thref{prop:type-space-functor-beck-chevalley}.

Consider the language with three unary predicates $P(x)$, $Q(x)$ and $R(x)$. Let $T$ consist of just the h-inductive sentence $\forall xy(P(x) \wedge Q(y) \to R(x) \vee R(y))$. We let $M_1 = \{a, c\}$ be such that $P(M_1) = \{a, c\}$ and $Q(M_1) = R(M_1) = \{c\}$, and we let $M_2 = \{b, c\}$ with $Q(M_2) = \{b, c\}$ and $P(M_2) = R(M_2) = \{c\}$. Then $M_1$ and $M_2$ are models of $T$. There is a maximal zero-type, which is isolated by $\exists x(P(x) \wedge Q(x) \wedge R(x))$. So this is indeed the zero-type that is realised in $M_1$ and $M_2$.

Consider the pushout of $\mathbf{1} \leftarrow \mathbf{0} \to \mathbf{1}$, which is the disjoint union $\mathbf{1} + \mathbf{1} =\mathbf{2}$. We will show that the universal map $u: \S_2(T) \to \S_1(T) \times_{\S_0(T)} \S_1(T)$ is not surjective. Set $p(x) = \ctp^{M_1}(a)$ and $q(y) = \ctp^{M_2}(b)$. Then $p(x), q(y) \in \S_1(T)$ while their restrictions to the zero-type are the same, as we argued before. So $(p(x), q(y)) \in \S_1(T) \times_{\S_0(T)} \S_1(T)$. We claim that there can be no $r(x, y) \in \S_2(T)$ with $u(r(x, y)) = (p(x), q(y))$. Suppose that such an $r(x, y)$ is realised by some $a', b'$ in some model $M$ we would have $M \models P(a') \wedge Q(b')$. Then by the one h-inductive sentence that we have in $T$ we get $M \models R(a') \vee R(b')$, but $R(x) \not \in p(x)$ and $R(y) \not \in q(y)$. So $r(x, y)$ cannot restrict to both $p(x)$ and $q(y)$. We have arrived at our contradiction and conclude that $u$ is not surjective.
\end{example}
\begin{convention}
\thlabel{conv:function-finite-sets-multiplied}
For a function of finite sets $f: \n \to \m$ we use $f^{\times k}: \n \k \to \m \k$ to denote the map that is $f$ on each of the $k$ copies of $\n$. More precisely, viewing $\n\k$ as $\n \times \k$ (and similarly for $\m\k$), we have $f^{\times k}(a, b) = (f(a), b)$ for any $a \in \n$ and $b \in \k$.
\end{convention}
\begin{definition}
\thlabel{def:cartesian-family}
Let $F$ be a type space functor. For $k \geq 1$, a \emph{cartesian family (of arity $k$)} is a family $\Theta_n \subseteq F_{nk}$ of compact open sets, indexed by the natural numbers, such that the following holds. For $1 \leq i \leq n$ denote by $j_{i,n}: 1 \hookrightarrow \n$ the map with value $i$. For all $n$, we require:
\[
\Theta_n = \bigcap_{i = 1}^n F_{j_{i,n}^{\times k}}^{-1}(\Theta_1).
\]
For $n = 0$ this is the empty intersection, that is $\Theta_0 = F_0$.
\end{definition}
The name cartesian family is based on the idea that compact open sets correspond to definable sets (\thref{fact:properties-type-space-functor-of-a-theory}(i)). Then $\Theta_n$ corresponds to the cartesian product of $n$ times $\Theta_1$. The entire cartesian family can clearly be recovered from $\Theta_1$.
\begin{lemma}
\thlabel{lem:cartesian-family-interaction-maps}
Let $\Theta_n$ be a cartesian family of arity $k$ for $F$ and let $f: \n \to \m$ be any function. Then $\Theta_m \subseteq F_{f^{\times k}}^{-1}(\Theta_n)$ and if $f$ is surjective we have equality.
\end{lemma}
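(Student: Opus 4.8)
The plan is to reduce the statement to the defining equation of the cartesian family, using that set-theoretic preimages commute with intersections together with the (contravariant) functoriality of $F$. First I would expand the right-hand side. Since $F_{f^{\times k}}$ is an honest continuous map between spectral spaces, its preimage commutes with intersections, so I would write
\[
F_{f^{\times k}}^{-1}(\Theta_n) = F_{f^{\times k}}^{-1}\left( \bigcap_{j=1}^n F_{j_{j,n}^{\times k}}^{-1}(\Theta_1) \right) = \bigcap_{j=1}^n F_{f^{\times k}}^{-1} F_{j_{j,n}^{\times k}}^{-1}(\Theta_1).
\]
By contravariant functoriality of $F$, each composite preimage $F_{f^{\times k}}^{-1} F_{j_{j,n}^{\times k}}^{-1}$ equals $F_{f^{\times k} \circ j_{j,n}^{\times k}}^{-1}$, so the whole expression becomes an intersection of single-coordinate preimages of $\Theta_1$.

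The one genuine computation, and hence the main (if minor) obstacle, is the identity of maps $\k \to \m\k$
\[
f^{\times k} \circ j_{j,n}^{\times k} = j_{f(j),m}^{\times k}.
\]
Unwinding \thref{conv:function-finite-sets-multiplied}, the map $j_{j,n}^{\times k}$ sends $(1,b) \mapsto (j,b)$ and $f^{\times k}$ sends $(j,b) \mapsto (f(j),b)$, so the composite is $(1,b) \mapsto (f(j),b)$, which is exactly $j_{f(j),m}^{\times k}$. The only place requiring care is tracking this indexing and the interaction of the operation $(-)^{\times k}$ with composition. Substituting this identity into the display above yields
\[
F_{f^{\times k}}^{-1}(\Theta_n) = \bigcap_{j=1}^n F_{j_{f(j),m}^{\times k}}^{-1}(\Theta_1) = \bigcap_{i \in \{f(1), \ldots, f(n)\}} F_{j_{i,m}^{\times k}}^{-1}(\Theta_1),
\]
the last step being that repeated indices contribute the same set.

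Finally I would compare this with the defining equation $\Theta_m = \bigcap_{i=1}^m F_{j_{i,m}^{\times k}}^{-1}(\Theta_1)$. Since $\{f(1), \ldots, f(n)\} \subseteq \{1, \ldots, m\}$, the intersection computing $\Theta_m$ runs over a larger index set and is therefore contained in the one computing $F_{f^{\times k}}^{-1}(\Theta_n)$, giving $\Theta_m \subseteq F_{f^{\times k}}^{-1}(\Theta_n)$. When $f$ is surjective the two index sets coincide, so the inclusion becomes an equality. The boundary case $n = 0$ is consistent with this: there $\Theta_0 = F_0$ and $F_{f^{\times k}}^{-1}(F_0)$ is all of $F_{mk}$, so the asserted inclusion is immediate.
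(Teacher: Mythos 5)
Your proof is correct and follows essentially the same route as the paper: expand $F_{f^{\times k}}^{-1}(\Theta_n)$ via the defining equation, use that preimages commute with intersections and that (contravariant) functoriality gives $F_{f^{\times k}}^{-1}F_{j_{i,n}^{\times k}}^{-1} = F_{j_{f(i),m}^{\times k}}^{-1}$, and then compare index sets against the defining intersection for $\Theta_m$. Your explicit verification of the identity $f^{\times k} \circ j_{i,n}^{\times k} = j_{f(i),m}^{\times k}$ and the $n=0$ check are details the paper leaves implicit, but the argument is the same.
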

\begin{proof}
We have
\[
F_{f^{\times k}}^{-1}(\Theta_n) =
F_{f^{\times k}}^{-1}(\bigcap_{i = 1}^n F_{j_{i,n}^{\times k}}^{-1}(\Theta_1)) =
\bigcap_{i = 1}^n F_{f^{\times k}}^{-1} F_{j_{i,n}^{\times k}}^{-1}(\Theta_1) =
\bigcap_{i = 1}^n F_{j^{\times k}_{f(i), m}}^{-1}(\Theta_1).
\]
The result follows because $\Theta_m = \bigcap_{i = 1}^m F_{j_{i,m}^{\times k}}^{-1}(\Theta_1)$, with the final remark because $f^{\times k}$ is surjective precisely when $f$ is.
\end{proof}
\begin{definition}
\thlabel{def:partial-natural-transformation}
Let $F, F'$ be type space functors. A \emph{partial natural transformation (of arity $k$)} is a pair $(\beta, k): F \dashrightarrow F'$ where $k \geq 1$ and $\beta$ is a family of partial spectral maps $\beta_n: F_{nk} \dashrightarrow F'_n$. Here $n$ ranges over the natural numbers. The domains of $\beta$ are required to form a cartesian family (of arity $k$). Furthermore, for each $f: \n \to \m$ we require the following to commute:
\[
\begin{tikzcd}
F_{mk} \arrow[r, "\beta_m", dashed] \arrow[d, "F_{f^{\times k}}"'] & F_m' \arrow[d, "F_f'"] \\
F_{nk} \arrow[r, "\beta_n"', dashed]                               & F_n'                  
\end{tikzcd}
\]
\end{definition}
This last requirement is well-defined, because \thref{lem:cartesian-family-interaction-maps} implies that we always have $F_{f^{\times k}}(\dom(\beta_m)) \subseteq \dom(\beta_n)$.

The partial natural transformations will correspond to interpretations, so we need to express that they preserve necessary logical properties. This is done using a weak version of the Beck-Chevalley condition. \thref{ex:beck-chevalley-fail-for-partial-natural-transformation} illustrates why we have to consider this weaker version.
\begin{definition}
\thlabel{def:partial-natural-transformation-beck-chevalley}
A partial natural transformation $(\beta, k): F \to F'$ is said to satisfy the \emph{weak Beck-Chevalley condition} if for any $f: \n \to \m$ the following commutes:
\[
\begin{tikzcd}
\KO(F'_n) \arrow[rrr, "\beta_n^{-1} \cap F_{f^{\times k}}(F_{mk})"] &  &  & \KO(F_{nk})                                \\
\KO(F'_m) \arrow[u, "F'_f"] \arrow[rrr, "\beta_m^{-1}"']            &  &  & \KO(F_{mk}) \arrow[u, "F_{f^{\times k}}"']
\end{tikzcd}
\]
We say that $(\beta, k)$ satisfies the \emph{Beck-Chevalley condition} if the above diagram commutes already without taking the intersection. That is $\beta_n^{-1} F'_f = F_{f^{\times k}} \beta_m^{-1}$.
\end{definition}
Finally, we need something that corresponds to morphisms of interpretations. So these will be the 2-cells in the 2-category of type space functors. We will essentially let the equivalence in our main result (\thref{thm:equivalence-2-categories}) induce this 2-category structure. Not much attention will be given afterwards to the 2-cells, so the reader is welcome to use this `definition'. In practice we have to be careful about circularity, so formally we proceed as follows. 

A morphism of interpretations is given by a formula. So this will correspond to a compact open set in a type space functor. We will then also need to code the conditions (1)--(5) from \thref{def:morphism-of-interpretations}. In \thref{def:theory-of-type-space-functor} we develop what we call the internal logic of a type space functor. This is a way to talk about the compact open sets in a type space functor as if they were formulas.
\begin{definition}
\thlabel{def:morphism-partial-natural-transformations}
Let $(\beta, k), (\beta', k'): F \dashrightarrow F'$ be partial natural transformations. A \emph{morphism of partial natural transformations} $\Theta: (\beta, k) \to (\beta', k')$ is a compact open set $\Theta \in \KO(F_{k+k'})$ satisfying the equivalent to (1)--(5) from \thref{def:morphism-of-interpretations}, encoded in the internal logic of $F$. Vertical and horizontal composition are then also defined as for morphisms of interpretations, encoded in the internal logic of $F$.
\end{definition}
Even though we have not treated the internal logic of a type space functor yet, the following should give an idea of how to apply it in the above definition. For example, the internal logic version of \thref{def:morphism-of-interpretations}(5) becomes:
\[
\left \llbracket R_{\beta_n^{-1}(U)}(\bar{x}_1, \ldots, \bar{x}_n) \wedge \bigwedge_{i = 1}^n R_\Theta(\bar{x}_i, \bar{y}_i) \right \rrbracket \subseteq \left \llbracket R_{\beta_n'^{-1}(U)}(\bar{y}_1, \ldots, \bar{y}_n) \right \rrbracket,
\]
for all $n$ and all $U \in \KO(F_n')$.

\begin{definition}
\thlabel{def:the-rest-of-s}
For an interpretation $(\Gamma, k): T \to T'$, we define the partial natural transformation $\S(\Gamma, k): \S(T') \dashrightarrow \S(T)$ of arity $k$, where $\S(\Gamma, k)_n: \S_{nk}(T') \dashrightarrow \S_n(T)$ has domain $[\Gamma(\bar{x} = \bar{x})]$ and is given by
\[
\S(\Gamma, k)_n(p) = \{\phi(\bar{x}) : \Gamma(\phi(\bar{x}) \wedge \bar{x} = \bar{x}) \in p\}.
\]
For a morphism of interpretations $\theta: (\Gamma, k) \to (\Gamma', k')$ we define the morphism of partial natural transformations $\S(\theta): \S(\Gamma, k) \to \S(\Gamma', k')$ by
\[
\S(\theta) = [\theta(\bar{x}, \bar{y})] \subseteq \S_{k+k'}(T').
\]
\end{definition}
\begin{remark}
\thlabel{rem:semantic-s-interpretation}
Using \thref{rem:interpretation-models}, we can also define $\S(\Gamma, k)$ semantically as follows. For $p \in [\Gamma(\bar{x} = \bar{x})] \subseteq \S_{nk}(T')$, let $\bar{a}_1, \ldots, \bar{a}_n$ be a realisation of $p$ in some $M \models T'$, then $\S(\Gamma, k)_n(p) = \ctp^{\Gamma^*(M)}([\bar{a}_1], \ldots, [\bar{a}_n])$.
\end{remark}
\begin{lemma}
\thlabel{lem:properties-functor-s}
With the notation as in \thref{def:the-rest-of-s}, we have:
\begin{enumerate}[label=(\roman*)]
\item $\S(\Gamma, k)_n^{-1}([\phi(\bar{x})]) = [\Gamma(\phi(\bar{x}) \wedge \bar{x} = \bar{x})]$,
\item $\S(\Gamma, k)$ satisfies the weak Beck-Chevalley condition.
\end{enumerate}
For a strong interpretation $\Gamma: T \to T'$ this simplifies to:
\begin{enumerate}[label=(\roman*')]
\item $\S(\Gamma)_n^{-1}([\phi]) = [\Gamma(\phi)]$,
\item $\S(\Gamma)$ satisfies the Beck-Chevalley condition.
\end{enumerate}
\end{lemma}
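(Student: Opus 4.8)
The plan is to prove (i) and (ii) directly and then read off (i$'$) and (ii$'$) by specialising: a strong interpretation has $k = 1$ and interprets equality as genuine equality, so $\Gamma(\bar x = \bar x)$ holds everywhere (making $\S(\Gamma)$ a total natural transformation) and $\Gamma(\phi \wedge \bar x = \bar x)$ is equivalent to $\Gamma(\phi)$.

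For (i) I would just unwind \thref{def:the-rest-of-s}: for $p$ in the domain $[\Gamma(\bar x = \bar x)]$ we have $p \in \S(\Gamma,k)_n^{-1}([\phi])$ iff $\phi \in \S(\Gamma,k)_n(p)$ iff $\Gamma(\phi \wedge \bar x = \bar x) \in p$. The only thing to verify is that the domain condition is automatic: applying the defining implication of \thref{def:interpretation} to the valid $\phi \wedge \bar x = \bar x \to \bar x = \bar x$ gives $T' \models \Gamma(\phi \wedge \bar x = \bar x) \to \Gamma(\bar x = \bar x)$, so $[\Gamma(\phi \wedge \bar x = \bar x)] \subseteq [\Gamma(\bar x = \bar x)]$. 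As a by-product this shows each $\S(\Gamma,k)_n$ is a partial spectral map.

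For (ii) I would argue semantically. Fix $f \colon \n \to \m$ and write a general element of $\KO(\S_m(T))$ as $[\psi(\bar x)]$ using \thref{fact:properties-type-space-functor-of-a-theory}(i). Unwinding the diagram of \thref{def:partial-natural-transformation-beck-chevalley}, the weak Beck-Chevalley condition for $f$ is the set equality
\[
\S(\Gamma,k)_n^{-1}\bigl(\S_f(T)([\psi])\bigr) \cap \S_{f^{\times k}}(T')(\top) = \S_{f^{\times k}}(T')\bigl(\S(\Gamma,k)_m^{-1}([\psi])\bigr),
\]
where $\top = \S_{mk}(T')$. Using (i) together with the explicit image and preimage formulas of \thref{rem:explicit-description-preimage-direct-image} and the realisation description of \thref{rem:semantic-s-interpretation}, I would translate membership of a type $q \in \S_{nk}(T')$ into conditions on a realisation $\bar d_1, \ldots, \bar d_n$ (each a $k$-block) of $q$ in some $M \models T'$. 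The right-hand side becomes: there are $\bar c_1, \ldots, \bar c_m \in M^k$ with each $[\bar c_i] \in \Gamma^*(M)$, $\Gamma^*(M) \models \psi([\bar c_1], \ldots, [\bar c_m])$ and $q = \ctp^M(\bar c_{f(1)}, \ldots, \bar c_{f(n)})$. The left-hand side becomes: all $[\bar d_j] \in \Gamma^*(M)$; there exist $[\bar b_1], \ldots, [\bar b_m] \in \Gamma^*(M)$ with $\Gamma^*(M) \models \psi([\bar b])$ and $[\bar d_j] = [\bar b_{f(j)}]$; and, contributed by the intersection term, $\bar d_j = \bar d_{j'}$ literally whenever $f(j) = f(j')$.

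The inclusion $\supseteq$ is immediate by taking $\bar d_j := \bar c_{f(j)}$ and $\bar b_i := \bar c_i$. The reverse inclusion is where the work lies and is the main obstacle. Given the left-hand data I would set $\bar c_i := \bar d_j$ for an arbitrary $j \in f^{-1}(i)$ when $i \in \operatorname{im}(f)$, and $\bar c_i := \bar b_i$ otherwise. This is well defined exactly because the literal equalities $\bar d_j = \bar d_{j'}$ make the choice of $j$ irrelevant. Then $\bar c_{f(j)} = \bar d_j$ gives $q = \ctp^M(\bar c_{f(1)}, \ldots, \bar c_{f(n)})$, each $[\bar c_i] \in \Gamma^*(M)$, and using $[\bar d_j] = [\bar b_{f(j)}]$ one checks $[\bar c_i] = [\bar b_i]$ for every $i$, whence $\Gamma^*(M) \models \psi([\bar c])$. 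The key subtlety, and the reason one cannot drop the intersection (cf.\ \thref{ex:beck-chevalley-fail-for-partial-natural-transformation}), is that naturality only provides the $\Gamma$-equivalence $[\bar d_j] = [\bar d_{j'}]$ on a fibre, which is too weak to build $\bar c_i$ as an honest tuple; intersecting with $\S_{f^{\times k}}(T')(\top)$ upgrades it to literal equality. Finally, (ii$'$) follows by specialisation: for a strong interpretation $\Gamma$-equivalence is literal equality, so $[\bar d_j] = [\bar b_{f(j)}]$ already forces $\bar d_j = \bar d_{j'}$ when $f(j) = f(j')$; the intersection is then redundant and the full Beck-Chevalley equality holds.
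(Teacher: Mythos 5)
Your proof is correct and takes the same route as the paper: the paper's entire proof of this lemma is the one-line remark that it ``comes down to writing out definitions, where (i) is useful for (ii)'', and your write-up is precisely that unwinding --- (i) by unfolding \thref{def:the-rest-of-s} together with the domain inclusion $[\Gamma(\phi \wedge \bar{x} = \bar{x})] \subseteq [\Gamma(\bar{x} = \bar{x})]$, then (ii) by feeding (i) into the explicit image/preimage descriptions of \thref{rem:explicit-description-preimage-direct-image} and the semantic reading of $\Gamma^*(M)$ from \thref{rem:interpretation-models}. The only difference is that you supply the details the paper omits, including the correct role of the intersection term (which your block-equality argument handles properly) and the specialisation to strong interpretations.
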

\begin{proof}
This comes down to writing out definitions, where (i) is useful for (ii).
\end{proof}
\begin{definition}
\thlabel{def:2-category-type-space-func}
The \emph{2-category of type space functors} $\TypeSpaceFuncBi$ has as objects type space functors, as 1-cells partial natural transformations that satisfy weak Beck-Chevalley and as 2-cells morphisms of partial natural transformations.

The \emph{category of type space functors} $\TypeSpaceFunc$ has as objects type space functors and as arrows natural transformations that satisfy Beck-Chevalley.
\end{definition}
\begin{corollary}
\thlabel{cor:functor-s}
The operations defined in \thref{def:type-space-functor-of-a-theory} and \thref{def:the-rest-of-s} define a 2-functor $\S: \CohTheoryBi^\op \to \TypeSpaceFuncBi$, which restricts to a functor $\S: \CohTheory^\op \to \TypeSpaceFunc$.
\end{corollary}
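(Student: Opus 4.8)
The plan is to verify that $\S$ is well defined on objects, $1$-cells and $2$-cells, and then that it respects all the compositions; almost everything has already been set up by the preceding lemmas, so the real work is organisational. On objects I would first observe that $\S(T)$ genuinely lands in $\TypeSpaceFuncBi$: by \thref{fact:properties-type-space-functor-of-a-theory}(iii) every map $\S_f(T)$ is open, so $\S(T)$ is an open functor, and \thref{prop:type-space-functor-beck-chevalley} says the image of every pushout square satisfies the Beck--Chevalley condition, which is exactly what \thref{def:type-space-functor} demands.

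For a $1$-cell, i.e.\ an interpretation $(\Gamma,k)\colon T\to T'$, I would check the three requirements of \thref{def:partial-natural-transformation} for $\S(\Gamma,k)\colon \S(T')\dashrightarrow\S(T)$. Each $\S(\Gamma,k)_n$ is a partial spectral map: its domain $[\Gamma(\bar{x}=\bar{x})]$ is compact open by \thref{fact:properties-type-space-functor-of-a-theory}(i), and \thref{lem:properties-functor-s}(i) shows that preimages of compact opens are compact open. That the domains form a cartesian family is a direct computation: using the substitution description of preimages in \thref{rem:explicit-description-preimage-direct-image}, the preimage of $[\Gamma(x=x)]$ along $\S_{j_{i,n}^{\times k}}(T')$ is $[\Gamma(x_i=x_i)]$, and intersecting over $i$ recovers $[\Gamma(\bar{x}=\bar{x})]$, matching \thref{def:cartesian-family}. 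Naturality of the squares indexed by $f\colon \n\to\m$ is cleanest from the semantic description in \thref{rem:semantic-s-interpretation}: both composites send a type realised by $\bar{a}_1,\dots,\bar{a}_m$ to $\ctp^{\Gamma^*(M)}([\bar{a}_{f(1)}],\dots,[\bar{a}_{f(n)}])$, the inclusion $F_{f^{\times k}}(\dom(\beta_m))\subseteq\dom(\beta_n)$ needed for well-definedness being \thref{lem:cartesian-family-interaction-maps}. Finally, the weak Beck--Chevalley condition is precisely \thref{lem:properties-functor-s}(ii). For a strong interpretation the arity is $1$ and $\Gamma(x=x)$ cuts out the whole space, so $\S(\Gamma)$ is a total natural transformation and \thref{lem:properties-functor-s}(ii') upgrades weak to full Beck--Chevalley, landing it in $\TypeSpaceFunc$.

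For a $2$-cell $\theta\colon(\Gamma,k)\to(\Gamma',k')$ I would show that $\S(\theta)=[\theta]$ satisfies the encodings of \thref{def:morphism-of-interpretations}(1)--(5) required by \thref{def:morphism-partial-natural-transformations}. This is a translation rather than a computation: under the dictionary sending a formula $\phi$ of $T'$ to the compact open $[\phi]$, each of the conditions (1)--(5) becomes the corresponding inclusion of compact opens in the internal logic (as in the displayed version of (5) following \thref{def:morphism-partial-natural-transformations}), and since $\theta$ already satisfies these conditions, $[\theta]$ satisfies the encodings verbatim.

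It remains to check functoriality, and this is where I expect the only real bookkeeping. Identities are quick: the identity interpretation maps to the identity partial natural transformation, and the identity $2$-cell (given by interpreted equality) maps to the identity. For composition of $1$-cells one computes, via the preimage formula \thref{lem:properties-functor-s}(i), that $\S$ of the composite interpretation agrees with the composite of partial natural transformations, with the direction reversed to match $\CohTheoryBi^\op$. The main obstacle is matching the composition formulas of \thref{def:2-category-cohtheory} with the compositions in $\TypeSpaceFuncBi$: both the vertical composite $\exists\bar{y}(\theta(\bar{x},\bar{y})\wedge\eta(\bar{y},\bar{z}))$ and the horizontal composite built from $\Delta(\theta)$ must be shown, term by term under the internal-logic dictionary, to translate to the respective composites of morphisms of partial natural transformations. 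These verifications are routine but lengthy, so as with the omitted check that $\CohTheoryBi$ is a $2$-category, I would carry them out by unwinding definitions and record only the key identifications rather than every line.
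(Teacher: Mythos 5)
Your proposal is correct and takes essentially the same route as the paper's own proof: objects are handled by \thref{fact:properties-type-space-functor-of-a-theory} together with \thref{prop:type-space-functor-beck-chevalley}, 1-cells by \thref{lem:properties-functor-s} (with the strong-interpretation case landing in $\TypeSpaceFunc$), and 2-cells by observing that the internal-logic conditions directly encode those of \thref{def:morphism-of-interpretations}. The only difference is one of thoroughness: you spell out the cartesian-family, naturality and functoriality verifications that the paper leaves implicit as routine, which is a harmless (indeed welcome) elaboration.
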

\begin{proof}
Let $T$ be a theory, by \thref{fact:properties-type-space-functor-of-a-theory} and \thref{prop:type-space-functor-beck-chevalley}, $\S(T)$ is actually an object in $\TypeSpaceFuncBi$ and in $\TypeSpaceFunc$.

For an interpretation $(\Gamma, k): T \to T'$, \thref{lem:properties-functor-s} implies that $\S(\Gamma, k)$ is indeed a partial natural transformation. By the same lemma, a strong interpretation will indeed correspond to an arrow in $\TypeSpaceFunc$.

The requirements on a morphism of partial natural transformations just code those on a morphism of partial natural transformations. So the 2-functor is also well-defined on 2-cells.
\end{proof}
\begin{example}
\thlabel{ex:beck-chevalley-fail-for-partial-natural-transformation}
This example illustrates why for general interpretations we have to consider the weak Beck-Chevalley condition. Let $T$ be the empty theory in the empty language. Let $T'$ be the theory with one binary relation symbol $E$, expressing that $E$ is an equivalence relation (possibly not defined everywhere). We define $(\Gamma, 1): T \to T'$ by setting $\Gamma(x = y)$ to be $E(x, y)$. Let $f: \mathbf{2} \to \mathbf{1}$, then writing out definitions we have
\[
\S(\Gamma, 1)_2^{-1}(\S_f(T)([x = x])) = \S(\Gamma, 1)_2^{-1}([x = y]) = [E(x, y)],
\]
and
\[
\S_f(T')(\S(\Gamma, 1)_1^{-1}([x = x])) = \S_f(T')([E(x, x)]) = [x = y \wedge E(x, x) \wedge E(y, y)].
\]
Clearly these two are not the same. The first one is missing a conjunction with ``$x = y$'', which would precisely correspond to intersecting with the image of $S_f(T')$.
\end{example}
\section{Hyperdoctrines and the Frobenius condition}
\label{sec:hyperdoctrines-and-the-frobenius-condition}
In this section we connect type space functors with (coherent) hyperdoctrines. For this we recall one more property, the Frobenius condition, and we show that type space functors satisfy this condition (\thref{def:frobenius-condition} and \thref{prop:frobenius-for-open-spectral-maps}). Besides that the contents of this section are not needed for the main theorem.

We always have that $\psi(x_1) \wedge \exists x_2 \phi(x_1, x_2)$ is equivalent to $\exists x_2 (\psi(x_1) \wedge \phi(x_1, x_2))$. If we let $f: \mathbf{1} \to \mathbf{2}$, then this can be expressed as
\[
\S_f(T)([\phi] \cap \S_f(T)^{-1}([\psi])) = \S_f(T)([\phi]) \cap [\psi].
\]
This property makes sense for arbitrary adjoints between categories with products, and is called the Frobenius condition (see e.g.\ \cite[Section IV.9]{maclane_sheaves_1994}). We will once more just be interested in the simpler case where the categories involved are distributive lattices.
\begin{definition}
\thlabel{def:frobenius-condition}
Let $\A$ and $\B$ be distributive lattices, viewed as categories. Let $F: \A \rightleftarrows \B: G$ be functors, with $F \dashv G$. Then this pair of adjoints is said to satisfy the \emph{Frobenius condition} if for all $A$ and $B$ we have:
\[
F(A \wedge G(B)) = F(A) \wedge B.
\]
\end{definition}
In the notation of \thref{def:frobenius-condition} we have $F(A \wedge G(B)) \leq F(A)$ and $F(A \wedge G(B)) \leq FG(B) \leq B$. Combining this we arrive at the following fact.
\begin{fact}
\thlabel{fact:frobenius-automatic-direction}
For any pair of adjoint functors as in \thref{def:frobenius-condition}, we always have $F(A \wedge G(B)) \leq F(A) \wedge B$.
\end{fact}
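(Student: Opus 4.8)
The plan is to use that $\A$ and $\B$, being distributive lattices viewed as categories, are posets: a functor between them is just a monotone map, an arrow $x \to y$ means $x \leq y$, and the meet $\wedge$ is the categorical product, hence the greatest lower bound. Thus to prove $F(A \wedge G(B)) \leq F(A) \wedge B$ it suffices to check that $F(A \wedge G(B))$ lies below both $F(A)$ and $B$ separately, and then invoke the universal property of the meet on the right-hand side.

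First I would record the two defining inequalities $A \wedge G(B) \leq A$ and $A \wedge G(B) \leq G(B)$, which hold because $A \wedge G(B)$ is a lower bound for $A$ and $G(B)$. Applying the monotone map $F$ to each gives $F(A \wedge G(B)) \leq F(A)$ and $F(A \wedge G(B)) \leq FG(B)$. For the second chain I then need $FG(B) \leq B$. In the posetal setting the adjunction $F \dashv G$ is a Galois connection, $F(x) \leq y \iff x \leq G(y)$, and applying this to the trivially true inequality $G(B) \leq G(B)$ yields exactly $FG(B) \leq B$ (equivalently, this is the counit component $\varepsilon_B \colon FG(B) \to B$). Combining, $F(A \wedge G(B)) \leq B$.

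Having established $F(A \wedge G(B)) \leq F(A)$ and $F(A \wedge G(B)) \leq B$, the greatest-lower-bound property of $F(A) \wedge B$ finishes the argument. There is essentially no obstacle here: the only points to keep straight are that a functor between these lattices is nothing more than a monotone map, and that one direction of the Galois connection is precisely the counit inequality $FG(B) \leq B$. The dual identity $A \leq GF(A)$ is not needed for this half of the Frobenius equality.
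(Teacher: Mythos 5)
Your proof is correct and follows essentially the same route as the paper: the text preceding the fact derives $F(A \wedge G(B)) \leq F(A)$ and $F(A \wedge G(B)) \leq FG(B) \leq B$ by monotonicity of $F$ and the counit inequality, then combines them via the meet's universal property. Your write-up merely makes explicit the Galois-connection justification of $FG(B) \leq B$, which the paper leaves implicit.
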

For every $f: \n \to \m$ in $\FinSet$ the adjoints $\S_f(T) \dashv \S_f^{-1}(T)$ satisfy the Frobenius condition. This can be checked directly, but it also follows from the following proposition.
\begin{proposition}
\thlabel{prop:frobenius-for-open-spectral-maps}
Let $f: X \to Y$ be an open spectral map of spectral spaces, then the corresponding homomorphism of distributive lattices $f^{-1}: \KO(Y) \to \KO(X)$ has a left adjoint $f: \KO(X) \to \KO(Y)$ by taking the image under $f$. Furthermore, this pair $f \dashv f^{-1}$ satisfies the Frobenius condition.
\end{proposition}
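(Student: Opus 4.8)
The plan is to observe that, once the two maps between compact-open lattices are shown to be well-defined, both the adjunction and the Frobenius identity reduce to purely set-theoretic facts about images and preimages, with no further use of the topology.

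First I would check well-definedness. The map $f^{-1}: \KO(Y) \to \KO(X)$ is well-defined precisely because $f$ is spectral, and it preserves finite meets and joins since preimage does. For the candidate left adjoint $f: \KO(X) \to \KO(Y)$, given $U \in \KO(X)$ I would note that $f(U)$ is open because $f$ is an open map, and compact because it is the continuous image of a compact set; hence $f(U) \in \KO(Y)$.

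Next, the adjunction $f \dashv f^{-1}$: since $\KO(X)$ and $\KO(Y)$ are partial orders under inclusion, establishing the adjunction amounts to verifying the equivalence $f(A) \subseteq B \iff A \subseteq f^{-1}(B)$, which holds for any function whatsoever. Finally, for the Frobenius condition $f(A \wedge f^{-1}(B)) = f(A) \wedge B$, one inclusion is free from \thref{fact:frobenius-automatic-direction}; for the reverse, I would chase an element $y \in f(A) \cap B$, lift it to some $a \in A$ with $f(a) = y$, observe that $a \in f^{-1}(B)$ since $y \in B$, and conclude $y \in f(A \cap f^{-1}(B))$.

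The only place where the hypotheses do any real work is in showing that $f$ sends compact opens to compact opens: this needs openness of $f$ for the image to be open, together with continuity for it to be compact. Once that is in hand, everything else is formal and requires no separation or sobriety properties of the spaces involved.
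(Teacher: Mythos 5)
Your proof is correct and follows essentially the same route as the paper: the adjunction is the standard image/preimage adjunction on the compact-open lattices, and Frobenius is reduced via \thref{fact:frobenius-automatic-direction} to the single inclusion $f(A) \cap B \subseteq f(A \cap f^{-1}(B))$, verified by an element chase. You simply spell out the well-definedness and the chase that the paper dismisses as ``easily done''.
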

\begin{proof}
As before, taking the direct image is left adjoint to taking the preimage. By \thref{fact:frobenius-automatic-direction} it is then enough to check $f(U) \cap V \subseteq f(U \cap f^{-1}(V))$ for all $U \in \KO(X)$ and $V \in \KO(Y)$, which is easily done.
\end{proof}
Using the duality between $\Spec$ and $\DLat$ we may also view type space functors as functors $\FinSet \to \DLat$, giving a more syntactic perspective. To characterise these functors from this perspective, we prove the converse of \thref{prop:frobenius-for-open-spectral-maps}.
\begin{proposition}
\thlabel{prop:dlat-left-adjoint-and-frobenius-implies-open}
Let $f: A \to B$ be a homomorphism of distributive lattices with a left adjoint $h: B \to A$ satisfying the Frobenius condition. Then the corresponding spectral map $f^{-1}: \SpecSpace(B) \to \SpecSpace(A)$ is open.
\end{proposition}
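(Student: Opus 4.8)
The plan is to pass through the Stone duality and show that the induced spectral map sends basic compact opens to compact opens. Write $g : \SpecSpace(B) \to \SpecSpace(A)$ for the map dual to $f$, so that $g(G) = f^{-1}(G) = \{a \in A : f(a) \in G\}$ for a prime filter $G$ on $B$ (this is the map the statement calls $f^{-1}$). Recall that the basic compact opens of $\SpecSpace(A)$ are the sets $\hat{a} = \{F : a \in F\}$, and similarly $\hat{b}$ for $B$; as these form a basis, every open of $\SpecSpace(B)$ is a union of sets $\hat{b}$, and the image under $g$ of such a union is the union of the images. Hence it suffices to show that $g(\hat{b})$ is open for each $b \in B$. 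The concrete target I would aim for is the identity $g(\hat{b}) = \widehat{h(b)}$, which exhibits $g(\hat{b})$ as a basic compact open of $\SpecSpace(A)$ and thereby closes the argument.

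The inclusion $g(\hat{b}) \subseteq \widehat{h(b)}$ is immediate from the adjunction $h \dashv f$: given a prime filter $G \ni b$ on $B$, the unit $b \le f(h(b))$ and upward closure of $G$ give $f(h(b)) \in G$, so $h(b) \in f^{-1}(G) = g(G)$.

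The reverse inclusion $\widehat{h(b)} \subseteq g(\hat{b})$ is the core of the proof. Given a prime filter $F$ on $A$ with $h(b) \in F$, I must realise $F$ as $g(G)$ for some prime filter $G$ on $B$ with $b \in G$; that is, $G$ should satisfy $b \in G$ and $f(a) \in G \iff a \in F$ for all $a$. I would construct $G$ using the prime filter theorem for distributive lattices, separating the filter $\Phi$ generated by $\{b\} \cup f[F]$ from the ideal $I$ generated by $f[A \setminus F]$ (here $A \setminus F$ is a prime ideal because $F$ is prime, and $f$ preserves the relevant meets and joins, so $\Phi = \{\,y : y \ge b \wedge f(a),\ a \in F\,\}$ and $I = \{\,y : y \le f(a'),\ a' \notin F\,\}$). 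A hypothetical element of $\Phi \cap I$ would yield $a \in F$ and $a' \notin F$ with $b \wedge f(a) \le f(a')$. Applying $h$ and the counit $h(f(a')) \le a'$ gives $h(b \wedge f(a)) \le a'$, and here the Frobenius condition rewrites the left side as $h(b) \wedge a$ (\thref{def:frobenius-condition}). Since $h(b), a \in F$ and $F$ is a filter, $h(b) \wedge a \in F$, forcing $a' \in F$ — a contradiction. Thus $\Phi \cap I = \emptyset$, the prime filter theorem supplies $G \supseteq \Phi$ disjoint from $I$, and by construction $b \in G$ and $f^{-1}(G) = F$, i.e.\ $g(G) = F$.

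I expect the reverse inclusion to be the only real obstacle: the reduction to basic opens and the easy inclusion are routine, but realising an arbitrary prime filter on $A$ as an image under $g$ forces the filter/ideal separation, and it is precisely in verifying disjointness that the Frobenius condition is indispensable. Mere adjointness gives only the automatic inequality $h(b \wedge f(a)) \le h(b) \wedge a$ of \thref{fact:frobenius-automatic-direction}, which points the wrong way for the deduction; one genuinely needs the equality to transport the relation $b \wedge f(a) \le f(a')$ back into the filter $F$.
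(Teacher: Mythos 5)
Your proof is correct and follows essentially the same route as the paper's: both reduce to showing that the image of a basic open $\hat{b}$ is exactly $\widehat{h(b)}$, prove the easy inclusion from the unit of $h \dashv f$, and obtain the hard inclusion by separating the filter generated by $\{b\} \cup f[F]$ from the ideal generated by $f[A \setminus F]$, with the Frobenius equality $h(b \wedge f(a)) = h(b) \wedge a$ doing exactly the work you identify in the disjointness check. The only differences are cosmetic: the paper runs the Zorn's lemma argument explicitly where you cite the prime filter theorem, and it records the easy inclusion via the auxiliary identity $f^{-1}(F) = {\uparrow} h(F)$ rather than directly.
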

\begin{proof}
Let $U$ be a basic open in $\SpecSpace(B)$, so $U = \{F \in \SpecSpace(B) : b \in F\}$ for some $b \in B$. We claim that $U' = \{f^{-1}(F) : F \in U\}$ is the basic open $V = \{G \in \SpecSpace(A) : h(b) \in G\}$, which would prove the proposition.

First, we prove $U' \subseteq V$. We claim that for a prime filter $F \subseteq B$, we have that
\[
f^{-1}(F) = \uparrow h(F) := \{ a \in A : h(b') \leq a \text{ for some } b' \in F\}.
\]
For $a \in f^{-1}(F)$ we have that $f(a) \in F$, so the unit of the adjunction gives $hf(a) \leq a$ and thus $a \in \;\uparrow h(F)$. For the other inclusion, let $a \in \;\uparrow h(F)$ and $b' \in F$ such that $h(b') \leq a$. The adjunction gives $b' \leq f(a)$, and thus $f(a) \in F$, so $a \in f^{-1}(F)$.

Let $F \in U$, then $b \in F$, so $h(b) \in \;\uparrow h(F) = f^{-1}(F)$ using the claim. We thus conclude that indeed $f^{-1}(F) \in V$.

It remains to prove that $V \subseteq U'$. So let $G$ be a prime filter containing $h(b)$. Then for any $a \in G$ we have $f(a) \wedge b \neq 0$. Because if $f(a) \wedge b = 0$, then by Frobenius we would have $0 = h(f(a) \wedge b) = a \wedge h(b) \in G$. So if we close $f(G) \cup \{b\}$ under finite meets and then take the upward closure, we get a filter $F'$.

Consider
\[
I = \{b' \in B : b' \leq f(a) \text{ for some } a \not \in G\},
\]
then $I$ is an ideal. Indeed, it is clearly non-empty and downwards closed. For $b_1 \leq f(a_1)$ and $b_2 \leq f(a_2)$ with $a_1, a_2 \not \in G$, we have $a_1 \vee a_2 \not \in G$ since $G$ is a prime filter, and indeed $b_1 \vee b_2 \leq f(a_1) \vee f(a_2) = f(a_1 \vee a_2)$.

We claim that $I \cap F' = \emptyset$. Any $b' \in I \cap F'$ would satisfy $f(a) \wedge b \leq b' \leq f(a')$ for some $a \in G$ and $a' \not \in G$. So applying the adjunction and Frobenius we would have $a \wedge h(b) = h(f(a) \wedge b) \leq a'$, which implies $a' \in G$ and so we have a contradiction.

Using Zorn's lemma, we can then extend $F'$ to a filter $F$ that is maximal disjoint from $I$. In particular, such an $F$ is a prime filter containing $b$. We finish our proof by showing that $f^{-1}(F) = G$. We directly have $G \subseteq f^{-1}(f(G)) \subseteq f^{-1}(F)$. For the other inclusion we let $a \in f^{-1}(F)$, then $f(a) \in F$ and thus $f(a) \not \in I$. So we must have $a \in G$, as required, because otherwise we would have $f(a) \in I$.
\end{proof}
\begin{corollary}
\thlabel{cor:spectral-map-open-iff-dlat-map-left-adjoint-and-frobenius}
A spectral map of spectral spaces is open if and only if the corresponding homomorphism of distributive lattices has a left adjoint satisfying the Frobenius condition.
\end{corollary}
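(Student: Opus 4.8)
The plan is to read this corollary off directly from the two preceding propositions by passing through the Stone duality $\DLat^\op \simeq \Spec$. The two implications are already isolated: \thref{prop:frobenius-for-open-spectral-maps} supplies one direction and \thref{prop:dlat-left-adjoint-and-frobenius-implies-open} supplies the other. All that remains is to match objects and arrows across the duality so that the hypotheses and conclusions line up.

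For the forward direction I would take an open spectral map $f: X \to Y$ and invoke \thref{prop:frobenius-for-open-spectral-maps} verbatim: the corresponding homomorphism $f^{-1}: \KO(Y) \to \KO(X)$ acquires a left adjoint, namely the direct-image map, and this adjunction satisfies the Frobenius condition. Nothing further is needed here.

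For the converse, suppose $f: X \to Y$ is a spectral map such that $f^{-1}: \KO(Y) \to \KO(X)$ admits a left adjoint satisfying Frobenius. I would apply \thref{prop:dlat-left-adjoint-and-frobenius-implies-open} with $A = \KO(Y)$ and $B = \KO(X)$, taking $f^{-1}$ as the proposition's lattice homomorphism. The proposition then yields that the induced spectral map $\SpecSpace(\KO(X)) \to \SpecSpace(\KO(Y))$ is open. The one point to check is that, under the natural homeomorphisms $X \cong \SpecSpace(\KO(X))$ and $Y \cong \SpecSpace(\KO(Y))$ furnished by the duality, this induced map is identified with the original $f$; this is precisely the assertion that the two duality functors are mutually inverse on morphisms, so openness transports back to $f$ itself.

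The only genuine obstacle is the direction bookkeeping through the contravariant duality, since a spectral map and its associated lattice homomorphism point in opposite directions. Once the assignments $A = \KO(Y)$ and $B = \KO(X)$ are fixed, everything commutes and no further computation is required.
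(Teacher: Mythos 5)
Your proposal is correct and matches the paper's intent exactly: the corollary is stated there without proof precisely because it is the immediate combination of \thref{prop:frobenius-for-open-spectral-maps} and \thref{prop:dlat-left-adjoint-and-frobenius-implies-open} through the duality $\DLat^\op \simeq \Spec$. Your identification of the induced map $\SpecSpace(\KO(X)) \to \SpecSpace(\KO(Y))$ with the original $f$ under the duality's natural homeomorphisms is the only bookkeeping required, and you have handled it correctly.
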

If we take the perspective of $\S(T)$ being a functor into $\DLat$, then we get a functor that associates to each $n$ the Lindenbaum-Tarski algebra of coherent formulas in $n$ free variables.
\begin{definition}
\thlabel{def:category-of-hyperdoctrines}
We call a functor $F: \FinSet \to \DLat$ a \emph{Lindenbaum-Tarski functor} if the following hold:
\begin{enumerate}[label=(\roman*)]
\item for every $f: \n \to \m$ the homomorphism $F_f: F_n \to F_m$ has a left adjoint $F_f^*: F_m \to F_n$, such that $F_f^* \dashv F_f$ satisfies the Frobenius condition;
\item the image under $F$ of every pushout in $\FinSet$ satisfies the Beck-Chevalley condition.
\end{enumerate}
The latter means that given a pushout as on the left in the diagram below, the induced square on the right commutes.
\[
\begin{tikzcd}
\mathbf{a} \arrow[rrd, "\text{pushout}", phantom] &  & \mathbf{b} \arrow[ll, "f"'] \arrow[rd, "\Rightarrow", phantom] & F_a \arrow[d, "F_g^*"'] \arrow[rrd, "\text{commutes}", phantom] &  & F_b \arrow[ll, "F_f"'] \arrow[d, "F_h^*"] \\
\mathbf{c} \arrow[u, "g"]                         &  & \mathbf{d} \arrow[ll, "k"] \arrow[u, "h"']                     & F_c                                                             &  & F_d \arrow[ll, "F_k"]                    
\end{tikzcd}
\]
\thref{def:partial-natural-transformation} and \thref{def:morphism-partial-natural-transformations} can easily be restated in terms of Lindenbaum-Tarski functors, so we view them as definitions for this setting as well. Note that a partial spectral map $f: X \dashrightarrow Y$ (with compact open domain) corresponds to a lattice homomorphism $f^{-1}: \KO(Y) \to \KO(X)$ that may not preserve the top element.

The \emph{2-category of Lindenbaum-Tarski functors} $\LTFuncBi$ has Lindenbaum-Tarski functors as objects, as 1-cells partial natural transformations that satisfy weak Beck-Chevalley and as 2-cells morphisms of partial natural transformations.

The \emph{category of Lindenbaum-Tarksi functors} $\LTFunc$ has Lindenbaum-Tarksi functors as objects and the arrows are natural transformations that satisfy the Beck-Chevalley condition.
\end{definition}
\begin{corollary}
\thlabel{cor:typespacefunc-dual-to-ltfunc}
The duality $\DLat^\op \simeq \Spec$ induces a 2-equivalence $\LTFuncBi^\op \simeq \TypeSpaceFuncBi$ that restricts to a duality $\LTFunc^\op \simeq \TypeSpaceFunc$.
\end{corollary}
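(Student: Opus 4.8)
The plan is to transport the Stone duality $\KO \colon \Spec \rightleftarrows \DLat^\op \colon \SpecSpace$ objectwise along $\FinSet$. Given a type space functor $F \colon \FinSet^\op \to \Spec$, I would define an associated functor $\FinSet \to \DLat$ by sending $n$ to the distributive lattice $\KO(F_n)$ and a map $f \colon \n \to \m$ to the homomorphism $F_f^{-1} \colon \KO(F_n) \to \KO(F_m)$; conversely one applies $\SpecSpace$ objectwise. The first task is to check that these operations land in the right place, i.e.\ that $\KO(F_{-})$ is a Lindenbaum--Tarski functor precisely when $F$ is a type space functor. The only genuine content here is \thref{cor:spectral-map-open-iff-dlat-map-left-adjoint-and-frobenius}: it says that $F_f$ is open (the defining condition of an open functor in \thref{def:type-space-functor}) if and only if $F_f^{-1}$ has a left adjoint satisfying the Frobenius condition, which is exactly condition (i) of \thref{def:category-of-hyperdoctrines}. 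Under this correspondence the direct image along $F_f$ is the left adjoint of $F_f^{-1}$, so the two Beck--Chevalley conditions in \thref{def:type-space-functor} and \thref{def:category-of-hyperdoctrines} become literally the same equation (preimage $\leftrightarrow$ homomorphism, direct image $\leftrightarrow$ left adjoint). Hence objects correspond bijectively.

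Next I would handle the 1-cells and 2-cells. Since \thref{def:category-of-hyperdoctrines} declares that \thref{def:partial-natural-transformation} and \thref{def:morphism-partial-natural-transformations} are to be read in the Lindenbaum--Tarski setting via $\KO$, this step is largely a matter of unwinding definitions. A partial spectral map with compact open domain $\beta_n \colon F_{nk} \dashrightarrow F'_n$ corresponds under $\KO$ to a homomorphism $\beta_n^{-1} \colon \KO(F'_n) \to \KO(F_{nk})$ that need not preserve the top element, and the cartesian-family and weak Beck--Chevalley requirements translate termwise. Because the duality is contravariant, the source and target of a 1-cell are interchanged: a partial natural transformation $F \dashrightarrow F'$ in $\TypeSpaceFuncBi$ is sent to one $\hat F' \dashrightarrow \hat F$ in $\LTFuncBi$, which is exactly the reversal of 1-cells recorded by the superscript $\op$ (2-cells, being compact open sets $\Theta \in \KO(F_{k+k'})$, i.e.\ lattice elements, keep their direction, matching \thref{conv:2-category}).

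Assembling these, I would verify that the objectwise duality defines strict 2-functors $\TypeSpaceFuncBi \to \LTFuncBi^\op$ and back, respecting vertical and horizontal composition of 2-cells (these are defined by the same internal-logic/lattice formulas on both sides), with the unit and counit isomorphisms inherited from the duality $\DLat^\op \simeq \Spec$. Finally, restricting attention to genuine (total) natural transformations whose components are honest spectral maps and which satisfy the strict Beck--Chevalley condition, the same translation shows these are carried to total natural transformations between Lindenbaum--Tarski functors satisfying strict Beck--Chevalley, so the 2-equivalence cuts down to the duality $\LTFunc^\op \simeq \TypeSpaceFunc$.

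I expect the main obstacle to be organisational rather than mathematical: keeping the variance straight (the interplay of the contravariant duality with the arity-$k$ shift $F_{nk}$ in the definition of a partial natural transformation) and confirming that every clause of the weak Beck--Chevalley condition and of the five conditions coded into a morphism of partial natural transformations is preserved verbatim under $\KO$. The substantive input, that openness matches the Frobenius left adjoint, is already isolated in \thref{cor:spectral-map-open-iff-dlat-map-left-adjoint-and-frobenius}, so no further hard analysis of spectral spaces should be required.
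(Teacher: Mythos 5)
Your proposal is correct and follows exactly the route the paper intends: the corollary is stated without an explicit proof precisely because it follows by transporting structure along the Stone duality, with the only substantive input being \thref{cor:spectral-map-open-iff-dlat-map-left-adjoint-and-frobenius} (openness of $F_f$ $\leftrightarrow$ Frobenius left adjoint of $F_f^{-1}$), the two Beck--Chevalley conditions matching termwise, and the 1-cells and 2-cells corresponding by the restated definitions in \thref{def:category-of-hyperdoctrines}. Your handling of the variance (reversal of 1-cells under the contravariant duality, 2-cells keeping their direction per \thref{conv:2-category}) is also exactly what the statement's $\op$ records.
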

\begin{remark}
\thlabel{rem:hyperdoctrines}
What we call a Lindenbaum-Tarski functor is a special case of the more general concept of a (coherent) hyperdoctrine (see \cite{lawvere_adjointness_2006} and \cite{coumans_generalising_2012}). Hyperdoctrines are more general in the sense that they admit any $\C^\op$ as domain where $\C$ is finitely complete. Besides that the 1-cells and 2-cells in our setting are different from those in \cite{coumans_generalising_2012}.
\end{remark}

\section{Recovering a theory}
\label{sec:recovering-a-theory}
The goal of this section is to construct a 2-functor $\Th(-)$ that will recover a theory from a type space functor. This 2-functor will turn out to be the inverse of $\S(-)$. In particular, $\Th(\S(T))$ will essentially be the Morleyisation of $T$.

The key ingredient in linking the theory we build to semantics is the Deligne completeness theorem (\thref{fact:deligne-completeness-theorem}). For this we need to introduce a deduction-system, which we formulate as a sequent calculus.
\begin{definition}
\thlabel{def:sequent-theory}
A \emph{(coherent) sequent} is an assertion of the form
\[
\phi \vdash_{\bar{x}} \psi,
\]
where $\phi$ and $\psi$ are coherent formulas and $\bar{x}$ is a finite tuple of variables, such that the free variables in $\phi$ and $\psi$ are in $\bar{x}$.
\end{definition}
A sequent $\phi \vdash_{\bar{x}} \psi$ corresponds to the h-inductive sentence $\forall \bar{x}(\phi(\bar{x}) \to \psi(\bar{x}))$. So a theory is the same thing as a set of sequents.
\begin{definition}
\thlabel{def:theory-of-type-space-functor}
Let $F$ be a type space functor, we define the \emph{internal logic} of $F$ as follows. The signature $\L(F)$ has an $n$-ary relation symbol $R_U$ for every $U \in \KO(F_n)$, for all $n \geq 0$.

An $n$-ary $\L(F)$-formula $\phi$ will be interpreted as some compact open $\llbracket \phi \rrbracket \subseteq F_n$, defined by induction on the complexity of the formula:
\begin{itemize}
\item $\llbracket \top \rrbracket = F_0$ and $\llbracket \bot \rrbracket = \emptyset$ ($\bot$ is considered as an $n$-ary formula for all $n$);
\item $\llbracket R_U(x_{f(1)}, \ldots, x_{f(n)}) \rrbracket = F_f^{-1}(U)$, for all $f: \n \to \m$ and $U \in \KO(F_n)$;
\item $\llbracket x_i = x_j \rrbracket = F_e(F_n)$, where $1 \leq i,j \leq n$ and $e: \n \to \n$ sends $i$ to $j$ and is the identity everywhere else;
\item $\llbracket \phi \wedge \psi \rrbracket = \llbracket \phi \rrbracket \cap \llbracket \psi \rrbracket$;
\item $\llbracket \phi \vee \psi \rrbracket = \llbracket \phi \rrbracket \cup \llbracket \psi \rrbracket$;
\item $\llbracket \exists x_{n+1} \phi(\bar{x}, x_{n+1}) \rrbracket = F_f(\llbracket \phi(\bar{x}, x_{n+1}) \rrbracket)$, where $f: \n \to \mathbf{n+1}$ is the inclusion.
\end{itemize}
Finally, we define the \emph{theory of $F$} as follows:
\[
\Th(F) = \{\phi \vdash_{\bar{x}} \psi : \llbracket \phi \rrbracket \subseteq \llbracket \psi \rrbracket\}.
\]
\end{definition}
\begin{remark}
\thlabel{rem:th-after-s}
Because the compact open subsets of $\S_n(T)$ correspond to formulas in the language of $T$, every $R_U \in \L(\S(T))$ is of the form $R_{[\phi(\bar{x})]}$ for some $\phi(\bar{x})$. We may assume that all variables in $\bar{x}$ actually appear in $\phi(\bar{x})$.
\end{remark}
The main result of this section is the following theorem.
\begin{theorem}
\thlabel{thm:theory-of-type-space-functor-vs-models}
Let $F$ be a type space functor, then for $\L(F)$-formulas $\phi$ and $\psi$:
\[
\Th(F) \models \forall \bar{x}(\phi(\bar{x}) \to \psi(\bar{x}))
\quad \text{if and only if} \quad
\llbracket \phi \rrbracket \subseteq \llbracket \psi \rrbracket.
\]
\end{theorem}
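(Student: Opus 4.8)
The plan is to prove the two implications separately. The direction $\Leftarrow$ is immediate: if $\llbracket \phi \rrbracket \subseteq \llbracket \psi \rrbracket$, then by the very definition of $\Th(F)$ (\thref{def:theory-of-type-space-functor}) the sequent $\phi \vdash_{\bar{x}} \psi$ lies in $\Th(F)$, so the corresponding h-inductive sentence $\forall \bar{x}(\phi(\bar{x}) \to \psi(\bar{x}))$ is an axiom and hence holds in every model. All the content is in the forward direction.

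For $\Rightarrow$, the idea is to route the semantic hypothesis through a syntactic proof system and then transport it back into the lattices $\KO(F_n)$. Concretely, I would fix a sound and complete sequent calculus for coherent logic attached to \thref{def:sequent-theory}. The Deligne completeness theorem (\thref{fact:deligne-completeness-theorem}) supplies the bridge: since $\forall \bar{x}(\phi \to \psi)$ holds in every $\Set$-model of $\Th(F)$, the sequent $\phi \vdash_{\bar{x}} \psi$ is derivable from the sequents of $\Th(F)$ in this calculus. (One should note in passing that the signature $\L(F)$ is genuinely set-sized, since each $\KO(F_n)$ is a set and there are only countably many $n$, so that Deligne applies.) It then remains to show that the assignment $\phi \mapsto \llbracket \phi \rrbracket$ is a \emph{sound} interpretation of this calculus: every sequent in $\Th(F)$ is satisfied — meaning $\llbracket \phi \rrbracket \subseteq \llbracket \psi \rrbracket$, which holds by definition of $\Th(F)$ — and each inference rule preserves this satisfaction relation. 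Feeding the derivation of $\phi \vdash_{\bar{x}} \psi$ through this soundness then yields $\llbracket \phi \rrbracket \subseteq \llbracket \psi \rrbracket$.

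The soundness verification is where the structure of a type space functor is used, and it is the main obstacle. The propositional rules (for $\wedge$, $\vee$, $\top$, $\bot$ and distributivity) follow from $\KO(F_n)$ being a distributive lattice under $\cap$ and $\cup$. The quantifier rules correspond to the adjunction $F_f \dashv F_f^{-1}$ between $\KO(F_{n+1})$ and $\KO(F_n)$ arising because $F$ is open, and the Frobenius axiom $\phi \wedge \exists y\, \psi \vdash \exists y(\phi \wedge \psi)$ is precisely \thref{prop:frobenius-for-open-spectral-maps}. Equality is handled directly by the definition $\llbracket x_i = x_j \rrbracket = F_e(F_n)$ together with the standard equality rules. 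The one step demanding care is substitution: I would first prove by induction on $\phi$ a substitution lemma stating $\llbracket \phi^f \rrbracket = F_f^{-1}(\llbracket \phi \rrbracket)$ for a context morphism $f$, whose base case is exactly the clause $\llbracket R_U(x_{f(1)}, \ldots, x_{f(n)}) \rrbracket = F_f^{-1}(U)$ and whose quantifier case is precisely the Beck-Chevalley condition built into \thref{def:type-space-functor}. With this lemma in hand, the commutation of substitution past existential quantifiers — and hence the soundness of the substitution rule — becomes routine, completing the argument and thereby the theorem.
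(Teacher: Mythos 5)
Your proposal is correct and takes essentially the same route as the paper: the paper likewise proves the theorem by combining the Deligne completeness theorem (\thref{fact:deligne-completeness-theorem}) with a soundness verification (\thref{prop:theory-of-type-space-functor-is-sound}) that $\Th(F)$ is closed under the coherent deduction rules, including the very same substitution lemma $F_f^{-1}(\llbracket \phi \rrbracket) = \llbracket \phi^f \rrbracket$ proved by induction via Beck-Chevalley (\thref{lem:substitution-of-variables-interpretation-type-space-functor}). The one step you underplay is the congruence axiom (equality2), which in the paper is not handled ``directly'' by the definition of $\llbracket x_i = x_j \rrbracket$ but itself requires the Frobenius condition of \thref{prop:frobenius-for-open-spectral-maps}; the tools you list suffice, so this is a presentational rather than a mathematical gap.
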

\begin{corollary}
\thlabel{cor:theory-of-type-space-functor-every-formula-is-relation}
Let $F$ be a type space functor, and let $\phi$ be an $\L(F)$-formula. Then $\phi$ is equivalent to $R_{\llbracket \phi \rrbracket}$, modulo $\Th(F)$.
\end{corollary}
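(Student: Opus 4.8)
The plan is to reduce everything to a single denotation computation and then invoke \thref{thm:theory-of-type-space-functor-vs-models}. Write $\bar{x} = (x_1, \ldots, x_n)$ and suppose $\phi$ is an $n$-ary $\L(F)$-formula, so that $\llbracket \phi \rrbracket \in \KO(F_n)$ and $R_{\llbracket \phi \rrbracket}$ is an $n$-ary relation symbol of $\L(F)$. Reading $R_{\llbracket \phi \rrbracket}$ as the atomic formula $R_{\llbracket \phi \rrbracket}(x_1, \ldots, x_n)$, the claim that $\phi$ is equivalent to $R_{\llbracket \phi \rrbracket}$ modulo $\Th(F)$ unpacks, via \thref{thm:theory-of-type-space-functor-vs-models} applied in both directions, into the pair of inclusions $\llbracket \phi \rrbracket \subseteq \llbracket R_{\llbracket \phi \rrbracket} \rrbracket$ and $\llbracket R_{\llbracket \phi \rrbracket} \rrbracket \subseteq \llbracket \phi \rrbracket$.

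First I would compute $\llbracket R_{\llbracket \phi \rrbracket} \rrbracket$ directly from the interpretation clause for atomic formulas in \thref{def:theory-of-type-space-functor}. Instantiating that clause with $U = \llbracket \phi \rrbracket$ and with $f = \mathrm{id}_{\n} : \n \to \n$ gives
\[
\llbracket R_{\llbracket \phi \rrbracket}(x_1, \ldots, x_n) \rrbracket = F_{\mathrm{id}_{\n}}^{-1}(\llbracket \phi \rrbracket).
\]
Since $F$ is a functor, $F_{\mathrm{id}_{\n}} = \mathrm{id}_{F_n}$, so $F_{\mathrm{id}_{\n}}^{-1}$ is the identity on $\KO(F_n)$, and we obtain $\llbracket R_{\llbracket \phi \rrbracket} \rrbracket = \llbracket \phi \rrbracket$.

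With this equality in hand, both required inclusions hold trivially (each as an equality), so \thref{thm:theory-of-type-space-functor-vs-models} yields $\Th(F) \models \forall \bar{x}(\phi(\bar{x}) \to R_{\llbracket \phi \rrbracket}(\bar{x}))$ and $\Th(F) \models \forall \bar{x}(R_{\llbracket \phi \rrbracket}(\bar{x}) \to \phi(\bar{x}))$, which together give the claimed equivalence. I do not expect a genuine obstacle: the substance of the corollary is carried entirely by \thref{thm:theory-of-type-space-functor-vs-models}, and the only thing to verify is the functoriality identity $F_{\mathrm{id}}^{-1} = \mathrm{id}$ that makes $R_{\llbracket \phi \rrbracket}$ denote exactly $\llbracket \phi \rrbracket$. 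The single point requiring care is the bookkeeping of free variables: one must treat $\phi$ as an $n$-ary formula (matching the arity of $R_{\llbracket \phi \rrbracket}$) so that both denotations live in the same $\KO(F_n)$, which is precisely the convention already in force when $\llbracket \phi \rrbracket$ was defined.
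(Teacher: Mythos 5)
Your proof is correct, but it takes a different route from the paper's. The paper proves this corollary by induction on the complexity of $\phi$, invoking \thref{thm:theory-of-type-space-functor-vs-models} along the way; you avoid induction entirely. Your key move is to note that the atomic clause of \thref{def:theory-of-type-space-functor}, instantiated at $U = \llbracket \phi \rrbracket$ and $f = \mathrm{id}_{\n}$, together with functoriality ($F_{\mathrm{id}} = \mathrm{id}_{F_n}$, hence $F_{\mathrm{id}}^{-1} = \mathrm{id}$ on $\KO(F_n)$), gives $\llbracket R_{\llbracket \phi \rrbracket}(x_1, \ldots, x_n) \rrbracket = \llbracket \phi \rrbracket$ on the nose; since \thref{thm:theory-of-type-space-functor-vs-models} applies to \emph{arbitrary} $\L(F)$-formulas and not just atomic ones, the two trivial inclusions then convert directly into the two provable implications. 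This is arguably cleaner than the paper's argument: an induction on complexity would decompose the claim along connectives (e.g.\ reducing $\phi \wedge \psi$ to showing $R_{\llbracket\phi\rrbracket} \wedge R_{\llbracket\psi\rrbracket}$ is equivalent to $R_{\llbracket\phi\rrbracket \cap \llbracket\psi\rrbracket}$), but every such step would itself be discharged by the theorem in exactly the manner you use, so your argument shows the induction is dispensable --- the only induction genuinely needed is the one already built into the definition of $\llbracket - \rrbracket$. Your closing remark about arity bookkeeping (reading $\phi$ as $n$-ary so that $\llbracket \phi \rrbracket$ and $\llbracket R_{\llbracket \phi \rrbracket} \rrbracket$ both lie in $\KO(F_n)$) is precisely the right point of care, since the theorem compares denotations over a common tuple of variables $\bar{x}$.
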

\begin{proof}
Straightforward induction on the complexity of $\phi$, using \thref{thm:theory-of-type-space-functor-vs-models}.
\end{proof}
\begin{definition}
\thlabel{def:rest-of-th}
For a partial natural transformation $(\beta, k): F \dashrightarrow F'$, we define the interpretation $\Th(\beta, k): \Th(F') \to \Th(F)$ of arity $k$ as follows. For each compact open $U \subseteq F'_n$ we let
\[
\Th(\beta, k)(R_U) = R_{\beta_n^{-1}(U)},
\]
and we set $\Th(\beta, k)(x = y) = R_{\beta_2^{-1}(\llbracket x = y \rrbracket)}$.

For a morphism of partial natural transformations $\Theta: (\beta, k) \to (\beta', k')$ we define the morphism of interpretations $\Th(\Theta): \Th(\beta, k) \to \Th(\beta', k')$ as $R_{\Theta}$.
\end{definition}
\begin{lemma}
\thlabel{lem:properties-functor-th}
With the notation as in \thref{def:rest-of-th}, we have:
\begin{enumerate}[label=(\roman*)]
\item $\Th(\S(\Gamma, k)): \Th(\S(T)) \to \Th(\S(T'))$ is defined by
\[
R_{[\phi(\bar{x})]} \mapsto R_{[\Gamma(\phi(\bar{x}))]},
\]
where all variables in $\bar{x}$ appear in $\phi(\bar{x})$, and $\Th(\S(\Gamma, k))(x = y)$ is $R_{[\Gamma(x = y)]}$;
\item $\llbracket \Th(\beta, k)(\phi) \rrbracket = \beta_n^{-1}(\llbracket \phi \rrbracket)$.
\end{enumerate}
\end{lemma}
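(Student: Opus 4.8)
The plan is to establish (ii) first, as the general tool, and then read off (i) by a direct computation. For (ii) the key observation is that, by \thref{cor:theory-of-type-space-functor-every-formula-is-relation}, every formula is equivalent modulo $\Th(F')$ to a single relation symbol, and that interpretations preserve provable implications. This lets me avoid an induction over the structure of $\phi$ altogether and reduce the claim to evaluating $\Th(\beta,k)$ on one atomic formula.

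More precisely, fix an $\L(F')$-formula $\phi$ whose free variables are exactly $\bar{x} = (x_1, \ldots, x_n)$ (the standing assumption, cf.\ \thref{rem:th-after-s}; a dummy variable can be forced to appear with $\bar{x} = \bar{x}$ as in \thref{rem:formulas-match-arity-trick}, which does not change $\llbracket \phi \rrbracket$ since $\llbracket \bar{x} = \bar{x} \rrbracket = F'_n$). By \thref{cor:theory-of-type-space-functor-every-formula-is-relation} we have $\Th(F') \models \forall \bar{x}(\phi \leftrightarrow R_{\llbracket \phi \rrbracket}(\bar{x}))$, with both formulas having exactly the free variables $\bar{x}$. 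Since $\Th(\beta,k)$ is an interpretation, applying \thref{def:interpretation} to each of the two implications gives $\Th(F) \models \Th(\beta,k)(\phi) \leftrightarrow \Th(\beta,k)(R_{\llbracket \phi \rrbracket})$, and hence, by \thref{thm:theory-of-type-space-functor-vs-models}, the equality of compact opens $\llbracket \Th(\beta,k)(\phi) \rrbracket = \llbracket \Th(\beta,k)(R_{\llbracket \phi \rrbracket}) \rrbracket$. Finally \thref{def:rest-of-th} gives $\Th(\beta,k)(R_{\llbracket\phi\rrbracket}) = R_{\beta_n^{-1}(\llbracket\phi\rrbracket)}$, whose interpretation, by the atomic clause of \thref{def:theory-of-type-space-functor} with $f = \mathrm{id}$, is exactly $\beta_n^{-1}(\llbracket\phi\rrbracket)$. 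This proves (ii).

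For (i), \thref{def:rest-of-th} gives $\Th(\S(\Gamma,k))(R_U) = R_{\S(\Gamma,k)_n^{-1}(U)}$, and by \thref{rem:th-after-s} it suffices to treat $U = [\phi(\bar{x})]$ with every variable of $\bar{x}$ appearing in $\phi$. Now \thref{lem:properties-functor-s}(i) computes $\S(\Gamma,k)_n^{-1}([\phi(\bar{x})]) = [\Gamma(\phi(\bar{x}) \wedge \bar{x} = \bar{x})]$; since all variables appear, $\phi$ and $\phi \wedge \bar{x} = \bar{x}$ are equivalent formulas having exactly the free variables $\bar{x}$, so the defining property of the interpretation $\Gamma$ (in both directions) yields $[\Gamma(\phi \wedge \bar{x} = \bar{x})] = [\Gamma(\phi)]$, whence $\Th(\S(\Gamma,k))(R_{[\phi(\bar{x})]}) = R_{[\Gamma(\phi)]}$. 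For equality I first unwind the internal-logic clause for $\llbracket x = y \rrbracket$ in $\S(T)$ using the explicit direct-image formula of \thref{rem:explicit-description-preimage-direct-image}, obtaining $\llbracket x = y \rrbracket = [x=y]$; then \thref{lem:properties-functor-s}(i) and the same redundancy argument give $\Th(\S(\Gamma,k))(x=y) = R_{\S(\Gamma,k)_2^{-1}([x=y])} = R_{[\Gamma(x=y)]}$.

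The main obstacle is entirely bookkeeping of free variables and arities: one must be careful that $\Gamma(\phi)$ and $\Gamma(\phi \wedge \bar{x} = \bar{x})$ differ in general (\thref{rem:formulas-match-arity-trick}), so the hypothesis that all variables of $\bar{x}$ appear is what licenses dropping the relativising conjunct. The pay-off of routing (ii) through \thref{cor:theory-of-type-space-functor-every-formula-is-relation} is precisely that it sidesteps a direct induction on the complexity of $\phi$; such an induction would be delicate at the existential clause, where the partiality of $\beta_n$ forces one to track the cartesian-family domains $\Theta_n$ together with the intersection appearing in the weak Beck--Chevalley condition of \thref{def:partial-natural-transformation-beck-chevalley}, rather than obtaining the equality on the nose.
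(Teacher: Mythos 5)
Your proof of part (i) is correct and is essentially the paper's own argument: unwind \thref{def:rest-of-th} using \thref{lem:properties-functor-s}(i) and \thref{rem:th-after-s}, and use the hypothesis that all variables of $\bar{x}$ appear in $\phi$ to discard the conjunct $\bar{x} = \bar{x}$. The problem is part (ii), where your argument is circular. Your key step asserts that, since $\Th(\beta,k)$ is an interpretation, the provable equivalence $\phi \leftrightarrow R_{\llbracket \phi \rrbracket}$ modulo $\Th(F')$ transfers to a provable equivalence $\Th(\beta,k)(\phi) \leftrightarrow \Th(\beta,k)(R_{\llbracket \phi \rrbracket})$ modulo $\Th(F)$. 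But at this point in the development, $\Th(\beta,k)$ is only a syntactically defined translation: \thref{def:rest-of-th} specifies its values on relation symbols and on equality, and the extension to all formulas is purely formal. That this translation satisfies the implication-preservation requirement of \thref{def:interpretation} is exactly what \thref{cor:functor-th} verifies, and that verification runs through \thref{lem:properties-functor-th}(ii) together with \thref{thm:theory-of-type-space-functor-vs-models}: one needs $\llbracket \Th(\beta,k)(\phi) \rrbracket = \beta_n^{-1}(\llbracket \phi \rrbracket)$ in order to transfer $\llbracket \phi \rrbracket \subseteq \llbracket \psi \rrbracket$ to $\llbracket \Th(\beta,k)(\phi) \rrbracket \subseteq \llbracket \Th(\beta,k)(\psi) \rrbracket$. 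So your proof of (ii) presupposes a fact whose proof in the paper depends on (ii). (Your appeals to \thref{thm:theory-of-type-space-functor-vs-models} and \thref{cor:theory-of-type-space-functor-every-formula-is-relation} themselves are unproblematic, since their proofs, via \thref{prop:theory-of-type-space-functor-is-sound} and Deligne, do not use this lemma; the circularity lies solely in invoking interpretation-hood of $\Th(\beta,k)$.)

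Nor can the circle be broken by establishing interpretation-hood first by some other route: the only handle one has on $\llbracket \Th(\beta,k)(\phi) \rrbracket$ for a compound formula $\phi$ is the compositional structure of the translation together with the inductive clauses of \thref{def:theory-of-type-space-functor}. So an induction on the complexity of $\phi$ is unavoidable, and this is precisely what the paper does for (ii): a straightforward induction in which each step is writing out definitions, with the equality and existential clauses handled using the cartesian-family domains and the weak Beck-Chevalley condition of \thref{def:partial-natural-transformation-beck-chevalley}. The delicacy you flag in your closing paragraph is real, but it has to be confronted rather than routed around; the reduction of arbitrary formulas to single relation symbols only becomes available for arguments placed after \thref{cor:functor-th}, not for the proof of this lemma.
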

\begin{proof}
The proof of (i) is writing out definitions, where \thref{lem:properties-functor-s}(i) and \thref{rem:th-after-s} are useful. The proof of (ii) is a straightforward induction argument, where each step is writing out definitions.
\end{proof}
\begin{corollary}
\thlabel{cor:functor-th}
The operations defined in \thref{def:theory-of-type-space-functor} and \thref{def:rest-of-th} define a 2-functor $\Th: \TypeSpaceFuncBi \to \CohTheoryBi^\op$, which restricts to a functor $\Th: \TypeSpaceFunc \to \CohTheory^\op$.
\end{corollary}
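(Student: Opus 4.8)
The plan is to mirror the proof of \thref{cor:functor-s}: I would check in turn that $\Th$ is well-defined on objects, on $1$-cells and on $2$-cells, and then that it respects identities and both kinds of composition. Throughout, the dictionary between the two sides is supplied by \thref{thm:theory-of-type-space-functor-vs-models}, which lets me replace every provability statement $\Th(F) \models \forall \bar{x}(\phi \to \psi)$ by the inclusion $\llbracket \phi \rrbracket \subseteq \llbracket \psi \rrbracket$, together with \thref{lem:properties-functor-th}(ii), which computes $\llbracket \Th(\beta, k)(\phi) \rrbracket = \beta_n^{-1}(\llbracket \phi \rrbracket)$. Since by definition $\Th(F)$ is simply a set of sequents, the object part is immediate.

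For the $1$-cell part, let $(\beta, k) \colon F \dashrightarrow F'$ satisfy weak Beck--Chevalley. To see that $\Th(\beta, k)$ is an interpretation in the sense of \thref{def:interpretation}, suppose $\Th(F') \models \forall \bar{x}(\phi \to \psi)$. By \thref{thm:theory-of-type-space-functor-vs-models} this says $\llbracket \phi \rrbracket \subseteq \llbracket \psi \rrbracket$ in $F'$; applying the monotone map $\beta_n^{-1}$ and using \thref{lem:properties-functor-th}(ii) gives $\llbracket \Th(\beta, k)(\phi) \rrbracket \subseteq \llbracket \Th(\beta, k)(\psi) \rrbracket$, which by \thref{thm:theory-of-type-space-functor-vs-models} again is exactly $\Th(F) \models \forall \bar{x}(\Th(\beta, k)(\phi) \to \Th(\beta, k)(\psi))$. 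Thus the implication-preservation condition holds, essentially for free from monotonicity of preimages. The genuinely nontrivial ingredient is hidden inside \thref{lem:properties-functor-th}(ii): the existential step of its induction is where the weak Beck--Chevalley condition of $(\beta, k)$ is consumed, matching $\beta_n^{-1} F'_f$ with $F_{f^{\times k}} \beta_{n+1}^{-1}$ on the relevant image.

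For the $2$-cell part, a morphism $\Theta \colon (\beta, k) \to (\beta', k')$ is by \thref{def:morphism-partial-natural-transformations} a compact open set satisfying the internal-logic renderings of conditions (1)--(5) of \thref{def:morphism-of-interpretations}. Transporting each of these inclusions of $\llbracket \cdot \rrbracket$ back through \thref{thm:theory-of-type-space-functor-vs-models} shows precisely that $R_\Theta = \Th(\Theta)$ satisfies (1)--(5) modulo $\Th(F)$, so it is a genuine morphism of interpretations. Preservation of identities and of vertical and horizontal composition then reduces to checking that the composition formulae of \thref{def:morphism-of-interpretations} and \thref{def:2-category-cohtheory}, read through the dictionary $R_U \mapsto \llbracket \cdot \rrbracket$, reproduce the composites defined on the type-space side; this is the same bookkeeping as for \thref{cor:functor-s}, run in reverse.

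Finally, for the restriction $\Th \colon \TypeSpaceFunc \to \CohTheory^\op$ I would verify that a total natural transformation $\beta$ of arity $1$ satisfying the full Beck--Chevalley condition yields a \emph{strong} interpretation, i.e.\ one with $k = 1$ interpreting equality as equality. The arity condition is automatic; for the equality condition, using \thref{cor:theory-of-type-space-functor-every-formula-is-relation} it suffices to show $\beta_2^{-1}(\llbracket x = y \rrbracket) = \llbracket x = y \rrbracket$. Writing $\llbracket x = y \rrbracket = F_e(F_2)$ for the collapsing map $e \colon \mathbf{2} \to \mathbf{2}$ as in \thref{def:theory-of-type-space-functor}, the full Beck--Chevalley identity $\beta_2^{-1} F'_e = F_e \beta_2^{-1}$ combined with totality ($\beta_2^{-1}(F'_2) = F_2$) gives the claim. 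I expect this equality-preservation point, which is exactly where the stronger (non-weak) Beck--Chevalley condition and totality are needed, together with the arity bookkeeping in horizontal composition, to be the only places requiring genuine care; everything else is forced by \thref{thm:theory-of-type-space-functor-vs-models} and \thref{lem:properties-functor-th}.
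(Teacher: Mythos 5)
Your proposal is correct and follows essentially the same route as the paper's proof: the interpretation property of $\Th(\beta,k)$ is extracted from \thref{lem:properties-functor-th} together with \thref{thm:theory-of-type-space-functor-vs-models}, and the restriction statement is settled by the very same computation $\beta_2^{-1}(\llbracket x = y\rrbracket) = \beta_2^{-1}(F'_e(F'_2)) = F_e(\beta_2^{-1}(F'_2)) = F_e(F_2) = \llbracket x = y\rrbracket$ using the full Beck--Chevalley condition, totality, and \thref{cor:theory-of-type-space-functor-every-formula-is-relation}. The extra detail you supply (monotonicity of $\beta_n^{-1}$, the 2-cell transport, and the composition bookkeeping) is exactly what the paper leaves implicit.
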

\begin{proof}
From \thref{lem:properties-functor-th} and \thref{thm:theory-of-type-space-functor-vs-models} it follows that $\Th(\beta, k)$ is indeed an interpretation. All that is left to check is that a natural transformation that satisfies the Beck-Chevalley condition does indeed yield a strong interpretation.

So let $\beta: F \to F'$ be a natural transformation satisfying the Beck-Chevalley condition, and let $e: \mathbf{2} \to \mathbf{2}$ be given by $e(2) = e(1) = 1$, then:
\[
\beta_2^{-1}(\llbracket x = y \rrbracket) = \beta_2^{-1}(F_e'(F_2')) = F_e(\beta_2^{-1}(F_2')) = F_e(F_2) = \llbracket x = y \rrbracket.
\]
So $\Th(\beta)(x = y)$ is $R_{\llbracket x = y \rrbracket}$, which by \thref{cor:theory-of-type-space-functor-every-formula-is-relation} is equivalent to $x = y$.
\end{proof}
To prove \thref{thm:theory-of-type-space-functor-vs-models} we need to introduce the deduction-system for coherent logic \cite[Section D1.3]{johnstone_sketches_2002_v2}. For ease of reference, and to introduce notation, we recall the rules and axioms here.
\begin{definition}
\thlabel{def:deduction-system}
The deduction-system for coherent logic has the following rules and axioms. Throughout, $\bar{x}$ denotes the tuple $(x_1, \ldots, x_n)$ and $\phi$, $\psi$ and $\chi$ are arbitrary coherent formulas.
\begin{labeling}{(distributivity)}

\item[(identity)] The axiom: $\phi \vdash_{{\bar{x}}} \phi.$

\item[(substitution)] Let $f: \n \to \m$ be any function, then we have the rule:
\begin{prooftree}
\AxiomC{$\phi(x_1, \ldots, x_n) \vdash_{(x_1, \ldots, x_n)} \psi(x_1, \ldots, x_n)$}
\UnaryInfC{$\phi(x_{f(1)}, \ldots, x_{f(n)}) \vdash_{(x_1, \ldots, x_m)} \psi(x_{f(1)}, \ldots, x_{f(n)})$}
\end{prooftree}

\item[(cut)] The rule:
\begin{prooftree}
\AxiomC{$\phi \vdash_{\bar{x}} \psi$}
\AxiomC{$\psi \vdash_{\bar{x}} \chi$}
\BinaryInfC{$\phi \vdash_{\bar{x}} \chi$}
\end{prooftree}

\item[(equality1)] For all $1 \leq i \leq n$, the axiom: $\top \vdash_{\bar{x}} x_i = x_i.$

\item[(equality2)] For all $1 \leq i,j \leq n$, the axiom:
$
(x_i = x_j) \wedge \phi(x_1, \ldots, x_i, \ldots, x_n) \vdash_{\bar{x}} \phi(x_1, \ldots, x_j, \ldots, x_n).
$

\item[(conjunction)] The axioms:
$
\phi \vdash_{\bar{x}} \top,
\phi \wedge \psi \vdash_{\bar{x}} \phi,
\phi \wedge \psi \vdash_{\bar{x}} \psi,
$
and the rule:
\begin{prooftree}
\AxiomC{$\phi \vdash_{\bar{x}} \psi$}
\AxiomC{$\phi \vdash_{\bar{x}} \chi$}
\BinaryInfC{$\phi \vdash_{\bar{x}} \psi \wedge \chi$}
\end{prooftree}

\item[(disjunction)] The axioms:
$
\bot \vdash_{\bar{x}} \phi,
\phi \vdash_{\bar{x}} \phi \vee \psi,
\psi \vdash_{\bar{x}} \phi \vee \psi,
$
and the rule:
\begin{prooftree}
\AxiomC{$\phi \vdash_{\bar{x}} \chi$}
\AxiomC{$\psi \vdash_{\bar{x}} \chi$}
\BinaryInfC{$\phi \vee \psi \vdash_{\bar{x}} \chi$}
\end{prooftree}

\item[($\exists$-quantifier)] The double rule:
\begin{prooftree}
\AxiomC{$\phi(\bar{x}, x_{n+1}) \vdash_{\bar{x}, x_{n+1}} \psi(\bar{x})$}
\doubleLine
\UnaryInfC{$\exists x_{n+1} \phi(\bar{x}, x_{n+1}) \vdash_{\bar{x}} \psi(\bar{x})$}
\end{prooftree}

\item[(distributivity)] The axiom:
$
\phi \wedge (\psi \vee \chi) \vdash_{\bar{x}} (\phi \wedge \psi) \vee (\phi \wedge \chi).
$

\item[(Frobenius)] The axiom:
$
\phi(\bar{x}) \wedge \exists x_{n+1} \psi(\bar{x}, x_{n+1}) \vdash_{\bar{x}} \exists x_{n+1}(\phi(\bar{x}) \wedge \psi(\bar{x}, x_{n+1})).
$
\end{labeling}
\end{definition}
Note that the converse of the (distributivity) and (Frobenius) rules can be derived from this system \cite[page 832]{johnstone_sketches_2002_v2}. We also did not mention terms in our (substitution) rule, because our signature is purely relational.

It should be clear that the deduction-system for coherent logic is sound for classical semantics, in fact it is a subset of the usual classical deduction-system. It turns out that it is also complete, and we get the following fact, which appears as \cite[Corollary IX.11.3]{maclane_sheaves_1994}.
\begin{fact}[Deligne completeness theorem]
\thlabel{fact:deligne-completeness-theorem}
Let $T$ be a coherent theory. A sequent $\phi(\bar{x}) \vdash_{\bar{x}} \psi(\bar{x})$ can be deduced from $T$ in the deduction-system for coherent logic if and only if $T \models \forall \bar{x}(\phi(\bar{x}) \to \psi(\bar{x}))$.
\end{fact}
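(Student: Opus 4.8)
The statement to be established is the Deligne completeness theorem, which the paper cites rather than proves; here is how I would prove it. The forward (soundness) direction is routine: one checks that every axiom and rule of the deduction-system in \thref{def:deduction-system} preserves classical validity. Since the coherent system is literally a fragment of the usual classical sequent calculus, any derivation of $\phi \vdash_{\bar x}\psi$ from $T$ witnesses $T \models \forall \bar x(\phi(\bar x)\to\psi(\bar x))$. All the content is in the converse, which I would prove in contrapositive form: if $\phi\vdash_{\bar x}\psi$ is \emph{not} derivable from $T$, I want to produce a classical model $M\models T$ together with a tuple $\bar a\in M$ realising $\phi$ but not $\psi$.

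First I would build the syntactic (coherent) category $\mathcal C_T$ of $T$: its objects are coherent formulas-in-context $\{\bar x . \phi\}$ taken up to provable equivalence, and its morphisms are provably functional coherent relations. Because equality, $\wedge$, $\vee$, $\top$, $\bot$ and $\exists$ all have deduction rules in \thref{def:deduction-system}, this is a coherent category, and the poset of subobjects of $\{\bar x.\top\}$ is exactly the Lindenbaum--Tarski lattice of coherent formulas ordered by $T$-provable entailment. By construction, derivability of $\phi\vdash_{\bar x}\psi$ is \emph{equivalent} to the inequality $[\phi]\le[\psi]$ of subobjects in $\mathcal C_T$. The point of this reformulation is that $\Set$-models of $T$ correspond precisely to coherent (finite-limit-, finite-coproduct- and image-preserving) functors $\mathcal C_T\to\Set$, equivalently to points of the classifying topos $\mathrm{Sh}(\mathcal C_T, J)$, where $J$ is the coherent coverage.

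The key input is then Deligne's theorem that every coherent topos has enough points; equivalently, the coherent functors $\mathcal C_T\to\Set$ are jointly conservative, so they jointly reflect the subobject ordering. Hence if $[\phi]\not\le[\psi]$ --- that is, the sequent is underivable --- there is a coherent functor $M\colon\mathcal C_T\to\Set$, i.e.\ a model $M\models T$, under which $M(\{\bar x.\phi\})\not\subseteq M(\{\bar x.\psi\})$. Choosing a tuple $\bar a$ in the difference gives $M\models\phi(\bar a)$ and $M\not\models\psi(\bar a)$, so $T\not\models\forall\bar x(\phi\to\psi)$, which is exactly the contrapositive of completeness.

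The genuine obstacle is Deligne's enough-points theorem itself, everything else being bookkeeping. I would prove it in one of two standard ways. The topos-theoretic route uses Barr's covering theorem to obtain a surjection onto the topos from sheaves on a complete Boolean algebra, where the internal logic is classical; two-valued points then arise from maximal filters on that Boolean algebra via Zorn's lemma and pull back to points of the original topos. The purely syntactic route is a Henkin-style construction: in a language expanded by witness constants, one builds by transfinite recursion a $T$-deductively closed set $\Sigma$ of coherent sentences that is \emph{prime} (if $\chi_1\vee\chi_2\in\Sigma$ then some $\chi_i\in\Sigma$) and \emph{witnessed} (if $\exists y\,\theta\in\Sigma$ then $\theta(c)\in\Sigma$ for some $c$), maximal subject to containing $\phi(\bar c)$ while not deriving $\psi(\bar c)$; the term model of $\Sigma$ is the desired countermodel. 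In either route, what makes the construction terminate is the \emph{finitary} character of coherent logic --- finite disjunctions and the compactness packaged into the coherent coverage --- which is precisely the feature that fails for infinitary (e.g.\ geometric) logic, where the analogous completeness statement is false.
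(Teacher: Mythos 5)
The paper does not actually prove this statement: it imports it as a Fact, citing \cite[Corollary IX.11.3]{maclane_sheaves_1994}. Your main reduction is precisely the argument of that cited source --- soundness because the system of \thref{def:deduction-system} is a fragment of classical deduction, then the syntactic category $\mathcal{C}_T$, the identification of derivability with the subobject order on $\{\bar{x}.\top\}$, models as coherent functors $\mathcal{C}_T \to \Set$ (equivalently points of the classifying topos), and joint conservativity of these via Deligne's enough-points theorem --- so on that level your proposal is a faithful reconstruction of the proof the paper points to, not a different route. Your Henkin-style proof of the key lemma is also correct as sketched: the disjunction rule guarantees that primeness can be maintained without ever deriving $\psi(\bar{c})$, the $\exists$-rule with a fresh constant guarantees witnessing, and the term model of the resulting prime, witnessed, deductively closed set satisfies exactly the coherent sentences it contains.

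The one genuine flaw is in your first (``topos-theoretic'') route to the key lemma. Barr's covering theorem does give a surjection $\mathrm{Sh}(B) \to \mathcal{E}$ with $B$ a complete Boolean algebra, but maximal filters on $B$ do \emph{not} give points of $\mathrm{Sh}(B)$: a point of $\mathrm{Sh}(B)$ corresponds to a \emph{completely} prime filter, i.e.\ to an atom of $B$, so when $B$ is atomless the topos $\mathrm{Sh}(B)$ has no points at all and there is nothing to transfer to $\mathcal{E}$. (Also, points are pushed forward along the surjection by composition, not ``pulled back''.) What Barr's theorem actually yields is completeness with respect to Boolean-valued models; to extract a two-valued model from a $B$-valued one, even for finitary coherent logic, one needs an ultrafilter quotient together with a \L{}o\'s/Rasiowa--Sikorski-type genericity condition ensuring the ultrafilter respects the suprema interpreting $\exists$ --- an additional argument of a different nature, not an application of Zorn's lemma to filters on $B$. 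Since you offer the Henkin construction as an independent alternative, your overall proof stands on that leg; but as written, the Barr leg would fail.
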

\begin{lemma}
\thlabel{lem:substitution-of-variables-interpretation-type-space-functor}
Let $F$ be a type space functor, and let $\phi$ be an $n$-ary formula in $\L(F)$. Then for every $f: \n \to \m$, we have
\[
F_f^{-1}(\llbracket \phi(x_1, \ldots, x_n) \rrbracket) = \llbracket \phi(x_{f(1)}, \ldots, x_{f(n)}) \rrbracket.
\]
\end{lemma}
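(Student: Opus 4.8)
The plan is to induct on the complexity of $\phi$, matching each clause of the inductive definition of $\llbracket - \rrbracket$ in \thref{def:theory-of-type-space-functor} against the corresponding syntactic effect of substituting $x_k \mapsto x_{f(k)}$. The content of the lemma is precisely that $F_f^{-1}$ implements this substitution, so most cases become formal once the correct naturality or Beck--Chevalley square is identified.

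I would first clear the routine cases. For $\top$ and $\bot$ there is nothing to do, since $F_f^{-1}$ preserves the top and bottom of $\KO$. As $F_f^{-1}$ is a lattice homomorphism it commutes with $\cap$ and $\cup$, so the cases $\phi = \psi_1 \wedge \psi_2$ and $\phi = \psi_1 \vee \psi_2$ follow immediately from the induction hypotheses for $\psi_1$ and $\psi_2$. For an atomic formula $R_U(x_{h(1)}, \dots, x_{h(p)})$ with $h: \mathbf{p} \to \n$ and $U \in \KO(F_p)$, one has $\llbracket R_U(x_{h(1)}, \dots, x_{h(p)}) \rrbracket = F_h^{-1}(U)$, and contravariant functoriality gives $F_f^{-1} F_h^{-1}(U) = F_{fh}^{-1}(U)$, which is exactly the interpretation of the substituted atomic formula $R_U(x_{f(h(1))}, \dots, x_{f(h(p))})$.

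The two substantial cases, equality and the existential quantifier, both reduce to the Beck--Chevalley condition supplied by \thref{prop:type-space-functor-beck-chevalley}. For $\phi = \exists x_{n+1}\, \psi$ I would rename the bound variable to the fresh $x_{m+1}$ and extend $f$ to $f': \mathbf{n+1} \to \mathbf{m+1}$ with $f'(n+1) = m+1$; the square built from the inclusions $\n \hookrightarrow \mathbf{n+1}$ and $\m \hookrightarrow \mathbf{m+1}$ together with $f$ and $f'$ is the pushout of \thref{ex:beck-chevalley-quantification}. Writing $\iota, \iota'$ for the two inclusions, the induction hypothesis for $\psi$ along $f'$ together with the Beck--Chevalley identity $F_f^{-1} F_\iota = F_{\iota'} F_{f'}^{-1}$ for that square turns $F_f^{-1}(\llbracket \exists x_{n+1}\, \psi \rrbracket)$ into $\llbracket \exists x_{m+1}\, \psi(x_{f(1)}, \dots, x_{f(n)}, x_{m+1}) \rrbracket$, as required.

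I expect the equality case to be the real obstacle, not because of any deep idea but because the relevant pushout is not literally indexed by $\m$. Here $\llbracket x_i = x_j \rrbracket = F_e(F_n)$ for the map $e: \n \to \n$ with $e(i) = j$ and identity elsewhere, and I must match $F_f^{-1}(F_e(F_n))$ with $\llbracket x_{f(i)} = x_{f(j)} \rrbracket = F_{e'}(F_m)$. Forming the pushout $Q$ of $e$ and $f$ (which share the domain $\n$) and applying Beck--Chevalley to the resulting square yields $F_f^{-1}(F_e(F_n)) = F_{e''}(F_Q)$, where $e'': \m \to Q$ is the pushout leg. The delicate step is to see that $F_{e''}(F_Q) = F_{e'}(F_m)$: both $e''$ and $e'$ factor as the same surjection $\sigma: \m \twoheadrightarrow \mathbf{m-1}$ identifying $f(i)$ with $f(j)$, followed by a split monomorphism, and since $F$ carries split monos to split epimorphisms the image of the full space is unaffected, so both sides equal $F_\sigma(F_{m-1})$. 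The degenerate subcase $f(i) = f(j)$, where the substituted formula is $x_{f(i)} = x_{f(i)}$ and interprets as the whole space, must be checked separately but drops out of the same factorisation. Getting this index bookkeeping and the identification of pushout legs exactly right is the crux of the proof.
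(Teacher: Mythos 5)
Your proposal is correct and takes essentially the same route as the paper's own (very terse) proof: induction on complexity, with the only non-trivial cases being equality and the existential quantifier, each handled by applying the Beck--Chevalley condition to the pushout of $f$ with $e: \n \to \n$, respectively with the inclusion $\n \to \mathbf{n+1}$. You additionally fill in the bookkeeping the paper leaves implicit in the equality case, namely that the pushout leg $\m \to Q$ only matches $e': \m \to \m$ after factoring both through the common surjection and using that $F$ sends split monomorphisms to split (hence surjective) epimorphisms.
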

\begin{proof}
By induction on the complexity of the formula. The non-trivial cases are the equality symbol and existential quantification. In both cases we use that $F$ satisfies Beck-Chevalley, by constructing the pushout of either $f$ and $e: \n \to \n$ or $f$ and the inclusion $\n \to \mathbf{n+1}$.
\end{proof}
\begin{proposition}
\thlabel{prop:theory-of-type-space-functor-is-sound}
Let $F$ be a type space functor, then $\Th(F)$ is closed under deductions in the deduction-system for coherent logic. That is, if $\phi$ and $\psi$ are $\L(F)$-formulas, then we can deduce $\phi \vdash_{\bar{x}} \psi$ from $\Th(F)$ precisely when $\llbracket \phi \rrbracket \subseteq \llbracket \psi \rrbracket$.
\end{proposition}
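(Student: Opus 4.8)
The statement is an equivalence, and the direction ``$\llbracket \phi \rrbracket \subseteq \llbracket \psi \rrbracket \implies \phi \vdash_{\bar x} \psi$ is derivable'' is immediate: by \thref{def:theory-of-type-space-functor} the hypothesis says exactly that $\phi \vdash_{\bar x} \psi \in \Th(F)$, so it is available as an axiom and derivable in one step. All the content is in the converse, which is a soundness statement: the assignment $\phi \mapsto \llbracket \phi \rrbracket$ provides a model of the coherent deduction-system over $\Th(F)$. The plan is to prove this by induction on the length of a derivation, maintaining the invariant that every derivable sequent $\alpha \vdash_{\bar x} \beta$ satisfies $\llbracket \alpha \rrbracket \subseteq \llbracket \beta \rrbracket$.

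There are two kinds of leaves in a derivation. The sequents drawn from $\Th(F)$ satisfy the invariant by definition, and the logical axioms of \thref{def:deduction-system} must be checked one by one, as must the rules. First I would dispatch the purely order-theoretic cases. (identity) and (cut) are reflexivity and transitivity of $\subseteq$; (conjunction) and (disjunction) follow from $\llbracket \phi \wedge \psi \rrbracket = \llbracket \phi \rrbracket \cap \llbracket \psi \rrbracket$ and $\llbracket \phi \vee \psi \rrbracket = \llbracket \phi \rrbracket \cup \llbracket \psi \rrbracket$ together with the meet/join properties of the lattice $\KO(F_n)$; (distributivity) is the distributivity of that lattice; and (equality1) holds since $\llbracket x_i = x_i \rrbracket = F_{\mathrm{id}}(F_n) = F_n \supseteq \llbracket \top \rrbracket$.

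The four substantial cases each rest on a specific property of type space functors. For (substitution) I would invoke \thref{lem:substitution-of-variables-interpretation-type-space-functor} to rewrite $\llbracket \phi(x_{f(1)}, \ldots, x_{f(n)}) \rrbracket = F_f^{-1}(\llbracket \phi \rrbracket)$, so that $\llbracket \phi \rrbracket \subseteq \llbracket \psi \rrbracket$ gives the substituted inclusion by monotonicity of $F_f^{-1}$. For the ($\exists$-quantifier) double rule, using the same lemma to identify the interpretation of $\psi(\bar x)$ as an $(n+1)$-ary formula with $F_f^{-1}(\llbracket \psi \rrbracket)$ (for $f \colon \n \to \mathbf{n+1}$ the inclusion), the rule becomes the Galois equivalence $F_f(\llbracket \phi \rrbracket) \subseteq \llbracket \psi \rrbracket \iff \llbracket \phi \rrbracket \subseteq F_f^{-1}(\llbracket \psi \rrbracket)$, which is exactly the adjunction $F_f \dashv F_f^{-1}$ of \thref{rem:adjunctions-in-type-space-functor}. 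For (Frobenius), the same lemma rewrites the right-hand side as $F_f(F_f^{-1}(\llbracket \phi \rrbracket) \cap \llbracket \psi \rrbracket)$, and the required inclusion is the non-automatic half of the Frobenius condition $F_f(F_f^{-1}(\llbracket \phi \rrbracket) \cap \llbracket \psi \rrbracket) = \llbracket \phi \rrbracket \cap F_f(\llbracket \psi \rrbracket)$ supplied by \thref{prop:frobenius-for-open-spectral-maps} (the other half being \thref{fact:frobenius-automatic-direction}).

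I expect (equality2) to be the main obstacle. Here one must show $\llbracket x_i = x_j \rrbracket \cap \llbracket \phi(\ldots, x_i, \ldots) \rrbracket \subseteq \llbracket \phi(\ldots, x_j, \ldots) \rrbracket$, where $\llbracket x_i = x_j \rrbracket = F_e(F_n)$ for the map $e \colon \n \to \n$ that sends $i$ to $j$ and fixes everything else. The observation I would exploit is that $e$ is idempotent, $e \circ e = e$, so that $F_e \circ F_e = F_e$ and hence $F_e$ restricts to the identity on its image $F_e(F_n)$. Thus any point $p \in F_e(F_n) \cap \llbracket \phi \rrbracket$ satisfies $F_e(p) = p$; since $\llbracket \phi(\ldots, x_j, \ldots) \rrbracket = F_e^{-1}(\llbracket \phi \rrbracket)$ by \thref{lem:substitution-of-variables-interpretation-type-space-functor} and $F_e(p) = p \in \llbracket \phi \rrbracket$, we get $p \in \llbracket \phi(\ldots, x_j, \ldots) \rrbracket$, as needed. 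The delicate point is the bookkeeping: getting right which variable is substituted along which map, and matching the direct image in $\llbracket x_i = x_j \rrbracket = F_e(F_n)$ against the preimage $F_e^{-1}$ appearing on the right. Once the idempotency of $F_e$ is in hand, the rest of this case, and hence the whole induction, closes routinely.
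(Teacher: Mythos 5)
Your proposal is correct, and its overall architecture coincides with the paper's proof: the easy direction holds by definition of $\Th(F)$, and soundness is proved by verifying each axiom and rule of \thref{def:deduction-system}, with the order-theoretic cases being immediate, (substitution) handled by \thref{lem:substitution-of-variables-interpretation-type-space-functor}, ($\exists$-quantifier) by the adjunction $F_f \dashv F_f^{-1}$, and (Frobenius) by \thref{prop:frobenius-for-open-spectral-maps}. The one place where you genuinely diverge is (equality2), which is also the case the paper spends most of its effort on. The paper stays at the level of the lattices $\KO(F_n)$: it applies the Frobenius condition a second time to rewrite $F_e(F_n) \cap \llbracket \phi(\ldots,x_i,\ldots) \rrbracket$ as $F_e(\llbracket \phi(\ldots,x_j,\ldots) \rrbracket) = F_e\bigl(F_e^{-1}(\llbracket \phi(\ldots,x_j,\ldots) \rrbracket)\bigr)$ and then concludes with the counit-type inclusion $F_e F_e^{-1}(U) \subseteq U$; idempotency of $e$ enters only as one way to justify $\llbracket \phi(\ldots,x_j,\ldots)\rrbracket = F_e^{-1}(\llbracket\phi(\ldots,x_j,\ldots)\rrbracket)$. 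You instead argue pointwise: since $F_{e \circ e} = F_e \circ F_e = F_e$, the map $F_e$ restricts to the identity on its image, so any $p \in \llbracket x_i = x_j\rrbracket \cap \llbracket\phi(\ldots,x_i,\ldots)\rrbracket$ satisfies $F_e(p) = p \in \llbracket\phi(\ldots,x_i,\ldots)\rrbracket$, i.e.\ $p \in F_e^{-1}(\llbracket\phi(\ldots,x_i,\ldots)\rrbracket) = \llbracket\phi(\ldots,x_j,\ldots)\rrbracket$. Both arguments are valid, and your bookkeeping of which map substitutes which variable is right. Yours is shorter and more elementary, avoiding a second appeal to Frobenius; the paper's is point-free, so it transfers verbatim to the dual Lindenbaum--Tarski (hyperdoctrine) picture of section \ref{sec:hyperdoctrines-and-the-frobenius-condition}, where one only has lattices and adjunctions rather than an honest space of points.
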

\begin{proof}
By definition we have that $\phi \vdash_{\bar{x}} \psi$ is in $\Th(F)$ if $\llbracket \phi \rrbracket \subseteq \llbracket \psi \rrbracket$. For the converse, we prove that $\Th(F)$ contains all the axioms mentioned in \thref{def:deduction-system} and that it is closed under all the rules mentioned in that definition.

The rules (identity), (cut), (equality1), (conjunction), (disjunction) and (distributivity) follow directly from the definition of $\Th(F)$. The (substitution) rule follows from \thref{lem:substitution-of-variables-interpretation-type-space-functor}.

For ($\exists$-quantifier) and (Frobenius) we let $f: \n \to \mathbf{n+1}$ be the inclusion. Then the former is precisely the adjunction $F_f \dashv F_f^{-1}$, and the latter follows from the fact that this adjunction satisfies the Frobenius condition (\thref{prop:frobenius-for-open-spectral-maps}).

Finally, we have to check (equality2). Let $e: \n \to \n$ be the map that sends $i$ to $j$ and is the identity everywhere else. We have $\llbracket x_i = x_j \rrbracket = F_e(F_n)$ and $\llbracket \phi(x_1, \ldots, x_j, x_j, \ldots, x_n) \rrbracket = F_e^{-1}(\llbracket \phi(x_1, \ldots, x_i, x_j, \ldots, x_n) \rrbracket)$. So, we have
\[
\llbracket (x_i = x_j) \wedge \phi(x_1, \ldots, x_i, x_j, \ldots, x_n) \rrbracket =
F_e(F_n) \cap \llbracket \phi(x_1, \ldots, x_i, x_j, \ldots, x_n) \rrbracket.
\]
By the Frobenius condition (\thref{prop:frobenius-for-open-spectral-maps}) this is equal to
\[
F_e(F_n \cap F_e^{-1}(\llbracket \phi(x_1, \ldots, x_i, x_j, \ldots, x_n) \rrbracket)) =
F_e(\llbracket \phi(x_1, \ldots, x_j, x_j, \ldots, x_n) \rrbracket).
\]
Note that $\llbracket \phi(x_1, \ldots, x_j, x_j, \ldots, x_n) \rrbracket = F_e^{-1}(\llbracket \phi(x_1, \ldots, x_j, x_j, \ldots, x_n) \rrbracket)$, either using \thref{lem:substitution-of-variables-interpretation-type-space-functor} or the fact that $e$ is idempotent. So we conclude that indeed
\begin{align*}
&\llbracket (x_i = x_j) \wedge \phi(x_1, \ldots, x_i, x_j, \ldots, x_n) \rrbracket &= \\
&F_e(\llbracket \phi(x_1, \ldots, x_j, x_j, \ldots, x_n) \rrbracket) &= \\
&F_e(F_e^{-1}(\llbracket \phi(x_1, \ldots, x_j, x_j, \ldots, x_n) \rrbracket)) &\subseteq \\
&\llbracket \phi(x_1, \ldots, x_j, x_j, \ldots, x_n)) \rrbracket.
\end{align*}
\end{proof}
\begin{proof}[Proof of \thref{thm:theory-of-type-space-functor-vs-models}]
Combine \thref{prop:theory-of-type-space-functor-is-sound} and \thref{fact:deligne-completeness-theorem}.
\end{proof}
\section{The 2-equivalence}
\label{sec:the-2-equivalence}
We recall the statement of the main theorem. Note that both \thref{cor:duality-first-order-theory-boolean-type-space} and \thref{cor:duality-theory-ltfunc} follow directly, using \thref{fact:properties-type-space-functor-of-a-theory}(ii) and \thref{cor:typespacefunc-dual-to-ltfunc} respectively.
\begin{repeated-theorem}[\thref{thm:equivalence-2-categories}]
The 2-functors
\[
S: \CohTheoryBi^\op \rightleftarrows \TypeSpaceFuncBi: \Th
\]
form a 2-equivalence of 2-categories, which restricts to an equivalence
\[
\S: \CohTheory^\op \rightleftarrows \TypeSpaceFunc: \Th
\]
\end{repeated-theorem}
\begin{proof}
We construct 2-natural isomorphisms $\Th(\S(T)) \cong T$ and $\S(\Th(F)) \cong F$. Since these isomorphisms will also live in $\CohTheory$ and $\TypeSpaceFunc$ respectively, this proves both equivalences at the same time. Throughout this proof, when we write $\phi(\bar{x})$ for some formula $\phi$, we mean that all variables in $\bar{x}$ actually appear in $\phi$.

\textbf{The isomorphism $\Th(\S(T)) \cong T$.} We start by defining a strong interpretation $\Gamma_T: \Th(\S(T)) \to T$ as follows: for $R_{[\phi(\bar{x})]} \in \L(\S(T))$ we set $\Gamma_T(R_{[\phi(\bar{x})]}) = \phi(\bar{x})$. Note that by \thref{rem:th-after-s} this covers all relation symbols in $\L(\S(T))$. For a formula $\psi(\bar{x})$ in $\L(\S(T))$, we easily see by induction that $[\Gamma_T(\psi(\bar{x}))] = \llbracket \psi(\bar{x}) \rrbracket$. So by \thref{thm:theory-of-type-space-functor-vs-models}, we have
\begin{align*}
&\Th(\S(T)) \models \forall \bar{x}(\psi(\bar{x}) \to \chi(\bar{x})) &\Longleftrightarrow \\
&\llbracket \psi \rrbracket \subseteq \llbracket \chi \rrbracket &\Longleftrightarrow \\
&[\Gamma_T(\psi)] \subseteq [\Gamma_T(\chi)] &\Longleftrightarrow \\
&T \models \forall \bar{x}(\Gamma_T(\psi)(\bar{x}) \to \Gamma_T(\chi)(\bar{x})).
\end{align*}
So $\Gamma_T$ is indeed a strong interpretation, and in particular an isomorphism.

Naturality in 1-cells for $\Gamma_T$ follows from \thref{lem:properties-functor-th}. For $(\Delta, k): T \to T'$:
\[
\Delta(\Gamma_T(R_{[\phi(\bar{x})]})) =
\Delta(\phi(\bar{x})) =
\Gamma_{T'}(R_{[\Delta(\phi(\bar{x}))]}) =
\Gamma_{T'}(\Th(\S(\Delta, k))(R_{[\phi(\bar{x})]})).
\]
Naturality in 2-cells amounts to writing down the relevant square and then writing out the definitions.

\textbf{The isomorphism $\S(\Th(F)) \cong F$.} We define $\beta_F: \S(\Th(F)) \to F$ as follows. Let $n \geq 0$, and let $p \in \S_n(\Th(F))$. We claim that
\[
\U_p = \{U \in \KO(F_n) : R_U \in p\}
\]
is a prime filter on $\KO(F_n)$. Clearly $\emptyset \not \in \U_p$, because $p$ is consistent. For $U \subseteq V$, we have by definition of $\Th(F)$ that $\Th(F) \models \forall \bar{x}(R_U(\bar{x}) \to R_V(\bar{x}))$. So if $U \in \U_p$, we also have $V \in \U_p$. Similarly, we have that $R_{U \cap V}$ and $R_{U \cup V}$ are equivalent to $R_U \wedge R_V$ and $R_U \vee R_V$ respectively, modulo $\Th(F)$. From which it quickly follows that $\U_p$ is a prime filter.

By the Stone duality $\U_p$ corresponds to an element in $F_n$. With some abuse of notation we will identify this element with $\U_p$, and we set $\beta_{F,n}(p) = \U_p$.

We check that $\beta_{F,n}$ is a homeomorphism, and that it is natural $n$. Since homeomorphisms are spectral maps, and natural isomorphisms always satisfy Beck-Chevalley, we then know that $\beta_F$ is indeed an isomorphism in $\TypeSpaceFuncBi$ and $\TypeSpaceFunc$. To check that $\beta$ is a 2-natural transformation, one only needs write out the definitions to see that the relevant diagrams commute, which we omit.

\underline{Homeomorphism.} We first prove that $\beta_{F,n}$ is continuous. Let $U$ be a compact open set in $F_n$, then
\[
p \in \beta_{F,n}^{-1}(U) \;\Longleftrightarrow\;
\beta_{F,n}(p) \in U \;\Longleftrightarrow\;
U \in \U_p \;\Longleftrightarrow\;
R_U \in p \;\Longleftrightarrow\;
p \in [R_U].
\]
So $\beta_{F,n}^{-1}(U) = [R_U]$, and so we see that $\beta_{F,n}$ is indeed continuous.

Next we define a map $g: F_n \to \S(\Th(F_n))$ that will be the inverse of $\beta_{F,n}$. Let $q \in F_n$, and define the set of first-order formulas:
\[
P_q = \Th(F) \cup \{R_U(\bar{x}) : q \in U\} \cup \{\neg R_V(\bar{x}) : q \not \in V\}.
\]
We claim that $P_q$ is consistent. Clearly, $\Th(F)$ is consistent, because otherwise $F_n$ would be empty. If $P_q$ would be inconsistent, then by compactness there would be $U_1, \ldots, U_k, V_1, \ldots, V_m \in \KO(F_n)$, such that
\[
\Th(F) \models \neg \exists \bar{x}(R_{U_1}(\bar{x}) \wedge \ldots \wedge R_{U_k}(\bar{x}) \wedge \neg R_{V_1}(\bar{x}) \wedge \ldots \wedge \neg R_{V_m}(\bar{x})),
\]
which is equivalent to
\[
\Th(F) \models \forall \bar{x}(R_{U_1}(\bar{x}) \wedge \ldots \wedge R_{U_k}(\bar{x}) \to R_{V_1}(\bar{x}) \vee \ldots \vee R_{V_m}(\bar{x})).
\]
By \thref{thm:theory-of-type-space-functor-vs-models}, this would mean that $U_1 \cap \ldots \cap U_k \subseteq V_1 \cup \ldots \cup V_k$. However, by construction we have $q \in U_1 \cap \ldots \cap U_k$ and $q \not \in V_1 \cup \ldots \cup V_k$. We thus arrive at a contradiction and conclude that $P_q$ must be consistent. Let $\bar{a}$ in some model $M$ of $\Th(F)$ realise $P_q$, then we set $g(q) = \ctp^M(\bar{a})$. This is well-defined, because every $n$-ary formula in $\L(F)$ is equivalent to an $n$-ary relation symbol (by \thref{cor:theory-of-type-space-functor-every-formula-is-relation}), and for every $n$-ary relation symbol, $P_q$ contains that symbol or its negation. In particular, $\ctp^M(a)$ is determined by $\{R_U(x) : q \in U\}$, which is essentially the prime filter corresponding to $q$ (under the Stone duality). So $g$ is indeed the inverse function for $\beta_{F,n}$. We are left to check that $g$ is continuous. For this we let $[R_U]$ be any compact open subset of $\S_n(\Th(F))$ (by a combination of \thref{fact:properties-type-space-functor-of-a-theory}(i) and \thref{cor:theory-of-type-space-functor-every-formula-is-relation}, every compact open subset is of that form), then we have:
\[
q \in g^{-1}([R_U]) \;\Longleftrightarrow\;
g(q) \in [R_U] \;\Longleftrightarrow\;
R_U \in g(q) \;\Longleftrightarrow\;
q \in U.
\]
So we have $g^{-1}([R_U]) = U$, and we conclude that $g$ is indeed continuous and thus that $\beta_{F,n}$ is a homeomorphism.

\underline{Naturality in $n$.} Let $f: \n \to \m$ be any function. We again abuse notation, and identify elements in the spectral spaces with their corresponding prime filters. Writing out definitions we obtain
\begin{align*}
F_f(\beta_{F,m}(p)) &= \{U \in \KO(F_n) : R_{F_f^{-1}(U)}(x_1, \ldots, x_m) \in p\}, \\
\beta_{F,n}(\S_f(\Th(F))(p)) &= \{ U \in \KO(F_n) : R_U(x_{f(1)}, \ldots, x_{f(n)}) \in p \}.
\end{align*}
By definition $\llbracket R_{F_f^{-1}(U)}(x_1, \ldots, x_m) \rrbracket = \llbracket R_U(x_{f(1)}, \ldots, x_{f(n)}) \rrbracket$, so by \thref{thm:theory-of-type-space-functor-vs-models} these are equivalent formulas and hence $F_f(\beta_{F,m}(p)) = \beta_{F,n}(\S_f(\Th(F))(p))$.
\end{proof}
\section{Classifying toposes}
\label{sec:connection-with-classifying-toposes}
There is an obvious connection between $\CohTheoryBi$ and the 2-category $\Topos$ of (Grothendieck) toposes, via classifying toposes. In this section we will briefly sketch this connection in terms of a pseudofunctor $\Set[-]: \CohTheoryBi^\op \to \Topos$. It is beyond the scope of this paper to study this pseudofunctor any further. We do provide a quick example (\thref{ex:morita-equivalence-vs-bi-interpretable}) why it cannot be a biequivalence, even after restricting its codomain.

We briefly recall the definition of a classifying topos, for more information we refer to \cite[Section D3]{johnstone_sketches_2002_v2} and \cite{caramello_theories_2017}. The \emph{classifying topos of $T$} is defined to be the topos $\Set[T]$ such that there is a natural equivalence
\[
T\Mod(\E) \simeq \Hom(\E, \Set[T]),
\]
where $T\Mod(\E)$ is the category of models of $T$ internal to $\E$. It follows that there must be a \emph{generic model} $G_T$ in $\Set[T]$, corresponding to the identity morphism, such that for any geometric morphism $f: \E \to \Set[T]$ the corresponding model is given by $f^*(G_T)$.

\begin{definition}
\thlabel{def:pseudofunctor-classifying-topos}
Every coherent theory $T$ has a classifying topos $\Set[T]$. We extend this to a pseudofunctor $\Set[-]: \CohTheoryBi^\op \to \Topos$ by describing what it does on 1-cells and 2-cells.

Let $(\Gamma, k): T \to T'$ be an interpretation, and let $M$ be a model of $T'$ in some topos $\E$. Then we can carry out the construction from \thref{rem:interpretation-models} internally in $\E$ to obtain a model $\Gamma^*(M)$ of $T$ in $\E$. In particular, if we consider $G_{T'}$ in $\Set[T']$, then we obtain a model $\Gamma^*(G_{T'})$ of $T$ in $\Set[T']$. Under the equivalence
\begin{align}
\label{eq:classifying-topos-set-t-prime-set-t}
T\Mod(\Set[T']) \simeq \Hom(\Set[T'], \Set[T]),
\end{align}
this model corresponds to a geometric morphism $\Set[\Gamma, k]: \Set[T'] \to \Set[T]$.

Let $\theta: (\Gamma, k) \to (\Gamma', k')$ be a morphism of interpretations from $T$ to $T'$. Carrying out the construction from \thref{rem:morphism-of-interpretations-gives-homomorphism-of-models} internally in $\Set[T']$ we get an internal homomorphism $f_\theta: \Gamma^*(G_{T'}) \to \Gamma'^*(G_{T'})$. Under the equivalence in (\ref{eq:classifying-topos-set-t-prime-set-t}), this corresponds to a natural transformation $\Set[\theta]: \Set[\Gamma, k] \to \Set[\Gamma', k']$.
\end{definition}
\begin{remark}
\thlabel{rem:different-notions-of-interpretation}
In the topos-theoretic setting an interpretation (of coherent theories) is often defined as a coherent functor between the syntactic categories, see e.g.\ \cite[Definition 1.4.12]{caramello_theories_2017}. This forces the equality symbol to be interpreted as the equality symbol on some definable set. This is thus weaker than our definition of interpretation, where it could be a definable equivalence relation in general. So the temptation to define $\Set[-]$ via syntactic categories must be resisted.
\end{remark}
Even if we restrict codomain of $\Set[-]$ to coherent toposes (those toposes that classify a coherent theory), it cannot be a biequivalence.
\begin{definition}[{\cite[Definition 2.2.1]{caramello_theories_2017}}]
\thlabel{def:morita-equivalence}
We call two theories $T$ and $T'$ \emph{Morita-equivalent} if their classifying toposes are equivalent: $\Set[T] \simeq \Set[T']$. Equivalently, this means that they have equivalent categories of models in every topos $\E$, natural in $\E$. That is, we have $T\Mod(\E) \simeq T'\Mod(\E)$, natural in $\E$.
\end{definition}
For those that are more familiar with model-theoretic constructs, there is another way of characterising Morita-equivalence. Recall that for a theory $T$ the \emph{syntactic category} $\C_T$ has formulas as objects and definable functions as arrows. One can then form the so-called \emph{pretopos completion} $P(\C_T)$ (see e.g.\ \cite[Remark 1.3.18(a)]{caramello_theories_2017}). Two theories $T$ and $T'$ are then Morita-equivalent if and only if the corresponding pretopos completions are equivalent as categories, so $P(\C_T) \simeq P(\C_{T'})$ (see e.g.\ \cite[Corollary 4.9]{tsementzis_syntactic_2015}). Forming the pretopos completion amounts to adding disjoint unions and coequalizers of equivalence relations. So this is very similar to the $(-)^\text{eq}$-construction in model theory (see e.g.\ \cite[page 151]{hodges_model_1993}), only in constructing $P(\C_T)$ we also add disjoint unions. This last difference will be precisely the obstacle, as we will see below.

It is well-known that Morita-equivalence is weaker than being bi-interpretable. We provide a basic counterexample. More elaborate examples can be found in \cite{caramello_lattice-ordered_2014, caramello_morita-equivalence_2015}.
\begin{example}
\thlabel{ex:morita-equivalence-vs-bi-interpretable}
For a distributive lattice $L$, let $T(L)$ be the (propositional) theory of that lattice. That is, we have a propositional symbol $R_a$ for each $a \in L$, and
\[
T(L) = \{\neg \exists x(x = x)\} \cup \{R_a \to R_b : a \leq b \text{ in } L\}.
\]
Let $\O(\R)$ be the lattice of open subsets of $\R$, and let $\B \subseteq \O(\R)$ be a countable basis. Then $\Set[T(\O(\R))]$ and $\Set[T(\B)]$ are both the topos of sheaves on $\R$. So $T(\O(\R))$ and $T(\B)$ are Morita-equivalent. However, they can never be bi-interpretable. To see this, let $(\Gamma, k): T(\O(\R)) \rightleftarrows T(\B): (\Delta, \ell)$ be any two interpretations. Then by the pigeonhole principle, there must be different opens $U, V \subseteq \R$ such that $\Gamma(R_U) = \Gamma(R_V)$, hence $\Delta \Gamma(R_U) = \Delta \Gamma(R_V)$. So $(\Delta \Gamma, k \ell)$ cannot be isomorphic to the identity interpretation, because that would mean that $R_U$ is equivalent to $\Delta \Gamma(R_U) = \Delta \Gamma(R_V)$, and hence to $R_V$.
\end{example}
If we restrict to a certain class of theories, these counterexamples go away.
\begin{definition}
\thlabel{def:disjoint-theory}
Call a theory $T$ \emph{positively disjoint} if there are formulas $\phi(\bar{x})$ and $\psi(\bar{x})$ such that $T \models \exists \bar{x}\phi(\bar{x}) \wedge \exists \bar{x} \psi(\bar{x})$ and $T \models \forall \bar{x} \neg (\phi(\bar{x}) \wedge \psi(\bar{x}))$.
\end{definition}
From \cite[Proposition 5.11]{mceldowney_morita_2020} we essentially get the following fact. The fact independently also appears in \cite{washington_equivalence_2018}.
\begin{fact}
\thlabel{fact:disjoint-theory-morita-equivalence}
If $T$ and $T'$ are positively disjoint theories, then they are Morita equivalent if and only if they are bi-interpretable.
\end{fact}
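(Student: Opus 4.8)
The plan is to prove the two implications separately, as the forward direction is essentially formal and all the real work (and the use of positive disjointness) is in the converse. For bi-interpretable $\Rightarrow$ Morita equivalent I would argue via the pseudofunctor $\Set[-]$ of \thref{def:pseudofunctor-classifying-topos}, or more elementarily via the observation that an interpretation $(\Gamma,k)\colon T \to T'$ induces, for every topos $\E$, a functor $\Gamma^*\colon T'\Mod(\E) \to T\Mod(\E)$ by carrying out the construction of \thref{rem:interpretation-models} internally, naturally in $\E$. A bi-interpretation supplies interpretations in both directions together with the 2-cells (definable isomorphisms, \thref{rem:bi-interpretation-is-equivalence}) witnessing that the two composite induced functors are naturally isomorphic to the identities; since these 2-cells are also natural in $\E$, the induced functors assemble into an equivalence $T\Mod(\E) \simeq T'\Mod(\E)$ natural in $\E$, which is exactly Morita equivalence in the sense of \thref{def:morita-equivalence}. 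This uses no disjointness, matching the remark that this direction is well-known.

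For the converse I would pass to the pretopos characterisation recalled in the text: Morita equivalence of $T$ and $T'$ amounts to an equivalence $P(\C_T) \simeq P(\C_{T'})$ of pretopos completions. The guiding principle is that bi-interpretability should instead correspond to an equivalence of the smaller completions $\C_T^{\text{eq}} \simeq \C_{T'}^{\text{eq}}$, obtained by adjoining only quotients of definable equivalence relations and not disjoint unions; indeed an interpretation $(\Gamma,k)$ is precisely a coherent functor $\C_T \to \C_{T'}^{\text{eq}}$ sending the home sort to the definable quotient $A/\Gamma(x=y)$ of \thref{rem:interpretation-models}, and bi-interpretations correspond to such equivalences. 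So the task reduces to upgrading the given equivalence of pretopos completions to an equivalence of the eq-completions.

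The hard part will be showing that positive disjointness lets one absorb the finite coproducts that distinguish $P(\C)$ from $\C^{\text{eq}}$ back into single sorts. Given witnesses $\phi(\bar{x}),\psi(\bar{x})$ as in \thref{def:disjoint-theory}, I would use the two nonempty, provably disjoint definable sets they cut out as definable tags: a binary coproduct $X + Y$ is then representable as a definable subset of a single product sort by marking the summands with $\phi$ and with $\psi$, and iterating covers all finite coproducts. Consequently every object of $P(\C_{T'})$ becomes a subquotient of a single sort, so the restriction of the equivalence $P(\C_T) \simeq P(\C_{T'})$ along $\C_T \hookrightarrow P(\C_T)$ factors, up to these coproduct-absorbing identifications, through $\C_{T'}^{\text{eq}}$, producing an honest interpretation $T \to T'$; symmetrically one obtains $T' \to T$, and because the original functors were inverse equivalences the composites are isomorphic to the identities, yielding a bi-interpretation. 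This coproduct-absorption is exactly the step that fails for the non-disjoint propositional theories of \thref{ex:morita-equivalence-vs-bi-interpretable}, and checking that it is coherent and compatible with the equivalence is the crux, the rest being bookkeeping; since this is the content of \cite[Proposition 5.11]{mceldowney_morita_2020}, I would cite that result rather than reproduce the full argument.
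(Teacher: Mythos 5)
Your proposal is correct and takes essentially the same approach as the paper: the paper itself gives no proof, but instead cites \cite[Proposition 5.11]{mceldowney_morita_2020} (and \cite{washington_equivalence_2018}), remarking only that their hypothesis $T \models \exists x y (x \neq y)$ is replaced by positive disjointness and that ``the point is that we can encode so-called coproduct sorts'' --- which is exactly the coproduct-absorption step your sketch isolates as the crux before deferring to the same citation. Your elaboration (formal forward direction via induced model functors; converse by cutting the equivalence of pretopos completions $P(\C_T) \simeq P(\C_{T'})$ down to the eq-completions, using the disjoint nonempty definables as tags for finite coproducts) is a faithful unpacking of that remark.
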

We make two remarks about this fact. The first is that they require that $T \models \exists x y(x \neq y)$ and similar for $T'$, instead of being positively disjoint. The inequality symbol is a problem for positive logic, hence the notion of positively disjoint theories. Their proof is easily adjusted. The point is that we can encode so-called ``coproduct sorts''.

The second remark is that their notion of Morita equivalence is based on the syntactic definition from \cite{barrett_morita_2016} and it is for first-order theories. In \cite{tsementzis_syntactic_2015} this is called \emph{T-Morita equivalence} and they establish a connection with coherent theories and classifying toposes. That is, they prove that for coherent theories T-Morita equivalence coincides with having the same classifying topos (which they call \emph{J-Morita equivalence}). They also prove that a first-order theory is T-Morita equivalent to its Morleyisation.
\begin{question}
\thlabel{q:classifying-topos-functor-on-positively-disjoint}
Aside from reflecting equivalences, what nice properties does $\Set[-]$ have when restricted to positively disjoint theories?
\end{question}

\bibliographystyle{alpha}
\bibliography{bibfile}


\end{document}